\numberwithin{equation}{section}
\newtheorem{theorem}{Theorem}[section]
\newtheorem{proposition}[theorem]{Proposition}
\newtheorem{corollary}[theorem]{Corollary}
\newtheorem{lemma}[theorem]{Lemma}
\theoremstyle{definition}
\newtheorem{definition}[theorem]{Definition}
\newtheorem{example}[theorem]{Example}
\newtheorem{remark}[theorem]{Remark}
\newtheorem{question}[theorem]{Question}
\def\<{\langle}
\def\>{\rangle}
\def\D{{\Delta}}
\def\N{{\mathbb N}}
\def\R{{\mathbb R}}
\def\Z{{\mathbb Z}}
\def\ent{{\rm ent}}
\def\diam{\mathop{\rm diam}\nolimits}
\def\T{{\mathbb T}}
\def\1{\mathbf 1}
\newcommand{\ie}{{\it i.e.}}
\newcommand{\eg}{{\it e.g.}}
\newcommand{\vol}{{\rm vol}}
\newcommand{\length}{{\rm length}}
\newcommand{\eent}{{\rm ent}}
\newcommand{\UW}{{\rm UW}}
\newcommand{\HH}{{\mathbb H}}
\newcommand{\Conv}{{\rm Conv}}
\newcommand{\Ric}{{\rm Ric}}
\newcommand{\st}{{\rm st}}
\newcommand{\CAT}{{\rm CAT}}
\def\N{{\mathbb N}}
\long\def\forget#1\forgotten{} %
\begin{document}

% \title{Minimal volume entropy of simplicial complexes}
\title{Minimal volume entropy and fiber growth}

\author[I.~Babenko]{Ivan Babenko}
\author[S.~Sabourau]{St\'ephane Sabourau}

\thanks{Partially supported by the ANR project Min-Max (ANR-19-CE40-0014).}

\address{Universit\'e Montpellier II, CNRS UMR 5149,
Institut Montpelli\'erain Alexander Grothendieck,
Place Eug\`ene Bataillon, B\^at. 9, CC051, 34095 Montpellier CEDEX 5, France} 

\email{babenko@umontpellier.fr}

\address{\parbox{\linewidth}{Univ Paris Est Creteil, CNRS, LAMA, F-94010 Creteil, France \\
Univ Gustave Eiffel, LAMA, F-77447 Marne-la-Vall\'ee, France}}

%\address{\parbox{\linewidth}{Univ Paris Est Creteil, Univ Gustave Eiffel, CNRS, LAMA, F-94010, Cr\'eteil, France \\
%CRM (UMI3457), CNRS, Montr\'eal, QC H3C 3J7, Canada}}

\email{stephane.sabourau@u-pec.fr}

\subjclass[2020]
{Primary 53C23; Secondary 57N65}

\keywords{Minimal volume entropy, collapsing, exponential and subexponential growth, fiber growth, Urysohn width.}

\begin{abstract}
This article deals with topological assumptions under which the minimal volume entropy of a closed manifold~$M$, and more generally of a finite simplicial complex~$X$, vanishes or is positive.
These topological conditions are expressed in terms of the growth of the fundamental group of the fibers of maps from a given finite simplicial complex~$X$ to lower dimensional simplicial complexes~$P$.
This leads to a complete characterization of spaces with positive minimal volume entropy for finite simplicial complexes whose fundamental group has uniform uniform exponential growth with no subgroup of intermediate growth.
As pointed out to us by V.~Kapovitch, these conditions are related to collapsing with Ricci curvature bounded below and lead to a refinement of Gromov's isolation theorem.
% which ensure that the minimal volume entropy of~$X$ either vanishes or is positive.
We also give examples of finite simplicial complexes with zero simplicial volume and arbitrarily large minimal volume entropy.
%Finally, as an application (pointed out to us by V. Kapovitch), we derive that every closed aspherical manifold whose fundamental group is non-elementary word hyperbolic cannot collapse with Ricci curvature bounded below.
\end{abstract}

\forget
\begin{abstract}
We present two results which ensure that, under some topological assumptions, the minimal volume entropy of a closed manifold, and more generally of a finite simplicial complex, is positive.
The topological assumptions of the first result are expressed in terms of the exponential growth of the fundamental group of the fibers of maps from a given finite simplicial complex to simplicial complexes of lower dimension.
In order to complement this approach, we also present topological conditions expressed in similar terms which ensure that the minimal volume entropy vanishes.
The topological assumptions of the second result are related to the exponential growth of certain subgroups in the fundamental group of the finite simplicial complex and to the topology of the loop space of its classifying space.
Several examples are presented throughout the text.
\end{abstract}
\forgotten

\maketitle

%\tableofcontents

%%%%%%%%%%%%%%%%%%%%%%%%%%%%%%%%%%%%%%%%%%%%%%%%%%%%%%%%%%%%%%%%%%%%%%%%%%%%%%%%%%%%%%%%%%%%%%%%%%%%%%%%%%%%%%%%%%%%%%%%%%%%

\section{Introduction}

The notion of volume entropy has attracted a lot of attention since the early works of Efremovich~\cite{Efrem53}, \v{S}varc~\cite{Shvarts55} and Milnor~\cite{Milnor68}.
This Riemannian invariant describes the asymptotic geometry of the universal cover of a Riemannian manifold and is related to the growth of its fundamental group; see~\cite{Shvarts55} and~\cite{Milnor68}.
It is also connected to the dynamics of the geodesic flow.
More specifically, the volume entropy agrees with the topological entropy of the geodesic flow of a closed nonpositively curved manifold and provides a lower bound for it in general; see~\cite{Dinaburg71} and~\cite{Manning79}.
In this article, we study the minimal volume entropy of a closed manifold (and more generally of a finite simplicial complex), a topological invariant introduced by Gromov~\cite{gro82} related to the simplicial volume.
More precisely, we give topological conditions which ensure, in one case, that the minimal volume entropy of a finite simplicial complex is positive and, in the other case, that it vanishes.
Before stating our results, we need to introduce some definitions.
Unless stated otherwise, all spaces are path-connected.

\begin{definition}
The \emph{volume entropy} of a connected finite simplicial complex~$X$ with a piecewise Riemannian metric~$g$ is the exponential growth rate of the volume of balls in the universal cover of~$X$.
More precisely, it is defined as
\begin{equation}\label{eq:entropy.comp.space}
\ent(X, g) =\lim_{R \to \infty} \frac{1}{R} \log( \vol \, \widetilde{B}(R))
\end{equation}
where $\widetilde{B}(R)$ is a ball of radius~$R$ centered at any point in the universal cover of~$X$.
The limit exists and does not depend on the center of the ball. 
%The definition extends to connected finite graphs and more generally, to finite simplicial complexes with a piecewise Riemannian metric.
Observe that the volume entropy of a finite simplicial complex with a piecewise Riemannian metric is positive if and only if its fundamental group has exponential growth; see Definition~\ref{def:algent}.

%\medskip

The \emph{minimal volume entropy} of a connected finite simplicial $m$-complex~$X$, also known as \emph{asymptotic volume}, see~\cite{B93}, is defined as
\[
\omega(X) = \inf_g \, \ent(X,g) \, \vol(X,g)^\frac{1}{m}
\]
where $g$ runs over the space of all piecewise Riemannian metrics on~$X$.
%For convenience, we also introduce
%\[
%\Omega(M,g) = \ent(M,g)^m \, \vol(M,g) 
%\]
%where $g$ runs over the space of all Riemannian metrics on~$M$.
This topological invariant is known to be a homotopic invariant for closed manifolds~$M$, see~\cite{B93}, and more generally, an invariant depending only on the image of the fundamental class of~$M$ under the classifying map, see~\cite{Brunnbauer08}.
The exact value of the minimal volume entropy (when nontrivial) of a closed manifold is only known in a few cases; see~\cite{katok}, \cite{BCG91}, \cite{Sambusetti99}, \cite{Sambusetti00}, \cite{CB03}, \cite{merlin}.
For instance, the minimal volume entropy of a closed $m$-manifold~$M$ which carries a hyperbolic metric is attained by the hyperbolic metric and is equal to $(m-1) \, \vol(M,{\rm hyp})^\frac{1}{m}$; see~\cite{katok} for $m=2$ and~\cite{BCG91} for~$m \geq 3$.

%\medskip

The \emph{simplicial volume} of a connected closed orientable $m$-manifold~$M$ is defined as
\[
\Vert M \Vert_\Delta = \inf \left\{ \sum_s |r_s| \mid \sum_s r_s \, \sigma_s \text{ real singular } m\text{-cycle representing } [M] \in H_m(M;\Z) \right\}.
\]
where $r_s \in \R$ and $\sigma_s:\Delta^m \to M$ is a singular $m$-simplex.
% the infimum is taken over all real singular $m$-cycles~$c= \sum_s r_s \, \sigma_s$ representing the fundamental class of~$M$ in~$H_m(M;\Z)$.
%Here, the maps $\sigma_s:\Delta^m \to M$ are singular $m$-simplices.
The definition extends to finite simplicial $m$-complexes~$X$ whose fundamental class is well-defined, that is, with $H_m(X;\Z) \simeq \Z$.
\end{definition}

The following inequality of Gromov~\cite[p.~37]{gro82} connects the minimal volume entropy of a connected closed manifold to its simplicial volume (see also~\cite{BK19} for a presentation of this result).
Namely, every connected closed orientable $m$-manifold~$M$ satisfies
\begin{equation} \label{eq:gro}
\omega(M)^m \geq c_m \, \Vert M \Vert_\Delta
\end{equation}
for some positive constant~$c_m$ depending only on~$m$.
%where  $c_m = \frac{\sqrt{\pi}}{m!} \cdot \frac{\Gamma(\frac{m+1}{2})}{\Gamma(\frac{m}{2})}$.
Thus, every closed manifold with positive simplicial volume has positive minimal volume entropy.
In particular, the minimal volume entropy of a closed manifold which carries a negatively curved metric is positive; see~\cite{gro82}.
Other topological conditions ensuring the positivity of the minimal volume entropy have recently been obtained in~\cite{sab17} and extended in~\cite[Section~4]{BS2} or~\cite{minentB}; see~\cite{BCGS} for a presentation of numerous examples and cases where these conditions apply.
These conditions are related to the topology of the loop space of the manifold. % and will be generalized in this article.
In a different direction, the minimal volume entropy provides a lower bound both on the minimal volume, see~\cite{gro82}, and on the systolic volume of a closed manifold, see~\cite{sab} and~\cite{Brunnbauer08}.

\medskip

A natural question to ask in view of~\eqref{eq:gro} is whether every closed orientable manifold with zero simplicial volume has zero minimal volume entropy.
This is known to be true in dimension two~\cite{katok} and in dimension three~\cite{pieroni} (see also~\cite{AP03} combined with Perelman's resolution of Thurston's geometrization conjecture), where the cube of the minimal volume entropy is proportional to the simplicial volume.
In dimension four, the same is known to be true but only for closed orientable geometrizable manifolds; see~\cite{SS09}.
The techniques developed in this article allow us to provide a negative answer for finite simplicial complexes; see Proposition~\ref{prop:ex}.
The question for closed orientable manifolds remains open despite recent progress made with the introduction of the volume entropy semi-norm; see~\cite{BS}.
This geometric semi-norm in homology measures the minimal volume entropy of a real homology class throughout a stabilization process.
Namely, given a path-connected topological space~$X$, it is defined for every ${\bf a} \in H_m(X;\Z)$ as
\begin{equation} \label{eq:defE}
\Vert {\bf a} \Vert_E = \lim_{k \to \infty} \frac{\omega(k \, {\bf a})^m}{k}
\end{equation}
where $\omega({\bf a})$ is the infimum of the minimal relative volume entropy of the maps $f:M \to X$ from an orientable connected closed $m$-pseudomanifold~$M$ to~$X$ such that $f_*([M])={\bf a}$; see~\cite{BS} for a more precise definition.
The volume entropy semi-norm shares similar functorial features with the simplicial volume semi-norm.
Moreover, the two semi-norms are equivalent in every dimension.
That is,
\begin{equation} \label{eq:BS}
c_m \,  \Vert {\bf a} \Vert_\Delta \leq  \Vert {\bf a} \Vert_E \leq C_m \, \Vert {\bf a} \Vert_\Delta
\end{equation}
for some positive constants $c_m$ and~$C_m$ depending only on~$m$.
Thus, a closed manifold with zero simplicial volume has zero volume entropy semi-norm, but its minimal volume entropy may be nonzero \emph{a priori}.
See~\cite{BS} for further details.

\medskip

More generally, one may ask for a topological characterization of closed manifolds or simplicial complexes with positive minimal volume entropy.
Such a topological characterization holds for the systolic volume, a topological invariant sharing similar properties with the minimal volume entropy; see~\cite{B93}, \cite{B06}, \cite{B08}, \cite{Brunnbauer08}.
Namely, a closed $m$-manifold or simplicial $m$-complex has positive systolic volume if and only if it is essential (\ie, its classifying map cannot be homotoped into the $(m-1)$-skeleton of the target space); see~\cite{gro83} and~\cite{B93}.
Though this condition is necessary to ensure that a closed manifold or simplicial complex has positive minimal volume entropy, see~\cite{B93}, it is not sufficient.
Therefore, one should look for stronger or extra assumptions.

\medskip

In this article, we present topological conditions in this direction.
The first one implies that the minimal volume entropy of a given simplicial complex vanishes and the second one ensures it is positive.
Both these conditions are expressed in terms of the exponential/subexponential growth of the fundamental group of the fibers of maps between a given simplicial complex and simplicial complexes of lower dimension.
We will need the following notions.

\begin{definition} \label{def:algent}
Let $G$ be a finitely generated group and $S$ be a finite generating set of~$G$.
Denote by~$B_S(t) \subseteq G$ the ball centered at the identity element of~$G$ and of radius~$t$ for the word distance induced by~$S$.
The group~$G$ has \emph{exponential growth} if the exponential growth rate of the number of elements in~$B_S(t)$ defined as
\[
\ent(G,S) = \lim_{t \to \infty} \frac{1}{t} \log | B_S(t) |
\]
is nonzero for some (and so any) finite generating set~$S$.
(By convention, a non-finitely generated group has exponential growth.)
The group~$G$ has \emph{uniform exponential growth} at least~$h>0$ if the exponential growth rate of the number of elements in~$B_S(t)$ is at least~$h$ for every finite generating set~$S$.
That is, its \emph{algebraic entropy} satisfies
\[
\ent(G) = \inf_S \ent(G,S) \geq h.
\]

The group~$G$ is \emph{$\delta$-thick} if it has exponential growth and every finitely generated subgroup $H \leqslant G$ with exponential growth has uniform exponential growth at least~$h$.
It is \emph{thick} if it is $\delta$-thick for some~$\delta >0$.
This notion is also referred to as \emph{uniform uniform exponential growth} or \emph{locally uniform exponential growth} in the literature.
The class of thick groups is fairly large, for instance, generic finitely presented groups are thick; see Section~\ref{sec:exthick} for further examples.

%See Section~\ref{sec:exthick} for a presentation of various classes of thick groups, including a generic class of finitely presented groups.

The group~$G$ has \emph{subexponential growth} if it does not have exponential growth.
In this case, the \emph{subexponential growth rate} of~$G$ is defined as
\[
\nu(G) = \limsup_{t \to \infty} \frac{\log \log |B_S(t)|}{\log t}.
\]
Note that the subexponential growth rate does not depend on the chosen finite generating set~$S$.

The group~$G$ has \emph{polynomial growth} if for some (and so any) finite generating set, there exists a polynomial~$P$ such that
\[
|B_S(t)| \leq P(t)
\]
for every~$t \geq 0$.
By~\cite{gro81}, a finitely generated group has polynomial growth if and only if it is virtually nilpotent.

The group~$G$ has \emph{intermediate growth} if its growth is subexponential but not polynomial.
The first group of intermediate growth was constructed by Grigorchuk~\cite{grigorchuk} and~\cite{grigorchuk2}, answering a question raised by Milnor.
Still, it is an open problem whether \emph{finitely presented} groups of intermediate growth exist.
%Let us mention that the distinction between exponential growth and uniform exponential growth for finitely generated groups is quite subtle.

Examples of finitely generated groups of exponential growth which do not have uniform exponential growth were first constructed by Wilson~\cite{Wilson04}, answering a question asked in~\cite{GH97} and~\cite{gro99}.
Still, it is an open question whether all \emph{finitely presented} groups of exponential growth have uniform exponential growth.
\end{definition}

%It was also pointed out in~\cite[\S5.5]{pat99} %p.~131 -- see also \cite{AP03}, \cite{PP03}, \cite{PP09} and~\cite{SS09} -- that there are no known examples of manifolds with zero simplicial volume and positive minimal volume entropy.

For our topological conditions, we consider connected finite simplicial $m$-complexes~$X$ along with simplicial maps $\pi:X \to P$ onto simplicial complexes~$P$ of dimension at most~$k<m$, where $m \geq 2$. %, where every fiber~$F_p=\pi^{-1}(p)$ is connected.
We denote by $i_*:\pi_1(F_p) \to \pi_1(X)$ the homomorphism induced by the inclusion map $i:F_p \hookrightarrow X$ of a connected component~$F_p$ of a fiber~$\pi^{-1}(p)$ of~$\pi$.

\medskip

The first condition considered for~$X$ is the fiber $\pi_1$-growth collapsing assumption (or fiber collapsing assumption for short).

\medskip

\textbf{Fiber $\pi_1$-growth collapsing assumption (FCA).}
Let $X$ be a finite connected simplicial $m$-complex.
Suppose there exists a simplicial map \mbox{$\pi:X \to P$} onto a simplicial complex~$P$ of dimension at most~$k<m$ such that for every connected component~$F_p$ of every fiber~$\pi^{-1}(p)$ with $p \in P$, the finitely generated subgroup~$i_*[\pi_1(F_p)] \leqslant \pi_1(X)$ has subexponential growth.
%Recall that $m$ is the dimension of~$X$.

%\medskip

%Actually, we will need a quantitative version of this assumption involving the following notion.
%The \emph{subexponential growth rate} of a group~$G$ with a finite generating set~$S$ is at most~$\nu$ if
%\[
%\limsup_{t \to \infty} \frac{\log \log |B_S(t)|}{\log t} \leq \nu
%\]
%where $B_S(t)$ is the ball of radius~$t$ centered at the identity element of~$G$ with respect to the word distance induced by~$S$.

\medskip

The fiber $\pi_1$-growth collapsing assumption \emph{with polynomial growth rate} is defined similarly with the condition that all the finitely generated subgroup~$i_*[\pi_1(F_p)] \leqslant \pi_1(X)$ have polynomial growth.

\medskip

Likewise, the fiber $\pi_1$-growth collapsing assumption \emph{with subexponential growth rate at most~$\nu$} is defined similarly with the condition that the subexponential growth rate of all the finitely generated subgroup~$i_*[\pi_1(F_p)] \leqslant \pi_1(X)$ is at most~$\nu$.

\medskip

In these definitions, it is enough to check the condition for every vertex~$p \in P$ (but we will not need this result).

\medskip

The following result shows that if the subexponential growth rate in the fiber collapsing assumption is small enough then the minimal volume entropy of~$X$ vanishes.

%This topological condition ensures that the minimal volume entropy of~$X$ vanishes.

\begin{theorem} \label{theo:omega=0}
Let $X$ be a connected finite simplicial $m$-complex satisfying the fiber $\pi_1$-growth collapsing assumption with subexponential growth rate at most~$\nu$ onto a simplicial $k$-complex~$P$.
Suppose that $\nu < \frac{m-k}{m}$.
Then $X$ has zero minimal volume entropy, that is,
%Every connected finite simplicial $m$-complex~$X$ satisfying the fiber $\pi_1$-growth collapsing assumption has zero minimal volume entropy, that is, 
%%Let $M$ be a connected closed $m$-manifold.
%%Suppose there exists a simplicial map $\pi:M \to P$ to a finite simplicial complex~$P$ of dimension~$m-1$, where every fiber~$F_p$ is connected, such that for every $p \in P$, the finitely presented subgroup~$i_*[\pi_1(F_p)]$ of~$\pi_1(M)$ has subexponential growth.
%%Then
\[
\omega(X) = 0.
\]
\end{theorem}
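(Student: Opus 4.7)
The plan is to exhibit a one-parameter family of piecewise Riemannian metrics $g_\e$ on~$X$ obtained by shrinking the fibers of~$\pi$ by a factor $\e \to 0$, and to prove that $\ent(X, g_\e) \cdot \vol(X, g_\e)^{1/m} \to 0$. After refining $X$ and~$P$ simplicially so that $\pi$ is simplicial and every simplex of~$X$ splits as a product of a horizontal simplex mapped linearly onto a simplex of~$P$ and a vertical simplex contained in a fiber, fix a piecewise flat reference metric on~$X$ in which every simplex is standard, and define $g_\e$ by keeping horizontal lengths unchanged and scaling vertical lengths by~$\e$. Since the vertical dimension equals $m-k$, this gives
\[
\vol(X, g_\e) \leq C_1 \, \e^{m-k}.
\]

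The heart of the argument is the entropy estimate. Cover~$P$ by the open stars~$V_\a$ of its vertices~$p_\a$, and apply the Seifert--van Kampen theorem to the induced cover of~$X$ by the connected components $U_{\a,j}$ of $\pi^{-1}(V_\a)$. Each $U_{\a,j}$ deformation retracts onto the corresponding connected component $F_{p_\a,j}$ of the fiber, so $\pi_1(X)$ admits a finite generating set $S = \bigcup_{\a,j} S_{\a,j} \cup B$, where $S_{\a,j}$ is a generating set of $G_{\a,j} := i_*[\pi_1(F_{p_\a,j})]$ and $B = \{b_\b\}$ is a set of ``base'' generators encoding the edges of the nerve of the cover outside a spanning tree. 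In the metric~$g_\e$, each element of each $S_{\a,j}$ can be represented by a loop of length at most $C_2 \, \e$ (as a concatenation of fiber edges of length~$\e$), while each $b_\b$ projects to a nontrivial loop in~$P$ and therefore has $g_\e$-length at least some uniform constant $c > 0$.

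Any based loop in $(X, g_\e)$ of length at most~$R$ can, after simplicial approximation, be written as an alternating word $w_0 \, b_{\b_1} \, w_1 \cdots b_{\b_s} \, w_s$ with $b_{\b_i} \in B$ and each $w_i \in G_{\a_i,j_i}$ realized by a fiber path of $g_\e$-length~$\ell_i$. The word length of~$w_i$ in $G_{\a_i,j_i}$ is then at most $\ell_i / \e$, and one has $s \leq R/c$ and $\sum_i \ell_i \leq R$. By the fiber $\pi_1$-growth collapsing assumption and the finiteness of~$P$, for every $\nu' > \nu$ the ball of radius~$t$ in every $G_{\a,j}$ contains at most $\exp(t^{\nu'})$ elements once $t$ is large. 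Counting the choices of alternating pattern, of the $b_{\b_i}$'s and of the $w_i$'s, and using Jensen's inequality and the concavity of $t \mapsto t^{\nu'}$ to bound $\sum_i \ell_i^{\nu'} \leq s^{1-\nu'} R^{\nu'} \leq C_3 \, R$, yields
\[
|B_S(R)| \leq C_4^R \cdot \exp\bigl( C_5 \, R \, \e^{-\nu'} \bigr),
\]
and therefore $\ent(X, g_\e) \leq C_6 \, \e^{-\nu'}$ for all small~$\e$.

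Combining the two estimates gives $\ent(X, g_\e) \cdot \vol(X, g_\e)^{1/m} \leq C_7 \, \e^{(m-k)/m - \nu'}$. The hypothesis $\nu < (m-k)/m$ allows one to choose $\nu'$ strictly between~$\nu$ and $(m-k)/m$, making the exponent strictly positive; the right-hand side then tends to~$0$ as $\e \to 0$, and hence $\w(X) = 0$. The hard part is the entropy bound: one must carefully set up the Seifert--van Kampen decomposition with metric control on the generators, verify that the $g_\e$-length of a fiber segment really bounds the word length of the corresponding element of $G_{\a,j}$, and extract a single~$\nu'$ that works uniformly over the finitely many vertex subgroups.
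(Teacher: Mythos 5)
Your overall strategy---shrink the fibers by $\varepsilon$, bound the volume by $O(\varepsilon^{m-k})$, bound the entropy by $O(\varepsilon^{-\nu'})$ via a word-length count and a power-mean inequality, and combine---is the same route the paper takes. The differences in setup are mostly cosmetic: the paper builds the metrics $g_t$ by a simplicial embedding of $X$ into a Euclidean space $E = H \oplus H_0 \oplus \cdots \oplus H_s$ and rescales the fiber factors $H_i$ by $t$, because a simplex of $X$ lying over a $q$-simplex of $P$ is a \emph{join} of the fiber pieces over its $q+1$ vertices, not a product; your ``splits as a product of a horizontal simplex and a vertical simplex'' is not literally available, though the conclusion $\vol(X,g_\varepsilon)=O(\varepsilon^{m-k})$ holds either way. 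Likewise, the paper's decomposition of an edge-loop into long edges $\alpha_i$ and fiber edge-paths $\beta_i$ plays exactly the role of your Seifert--van Kampen alternating words, and both then use the same H\"older/Jensen bound $\sum_i \ell_i^{\nu} \lesssim R$.

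The genuine gap is the sentence ``any based loop in $(X, g_\varepsilon)$ of length at most $R$ can, after simplicial approximation, be written as an alternating word \ldots with each $w_i$ realized by a fiber path of $g_\varepsilon$-length $\ell_i$'' together with the claim that its word length is $\le \ell_i/\varepsilon$ and that $s \le R/c$, $\sum_i \ell_i \le R$. For these inequalities to be usable you need that replacing the original loop by an edge-loop, and its pieces by fiber edge-paths, multiplies $g_\varepsilon$-lengths by a factor \emph{independent of $\varepsilon$}. Because $g_\varepsilon$ is highly anisotropic, naive simplicial approximation gives no such control. The paper devotes its Sections 2.3--2.4 precisely to this point: it constructs explicit $\kappa_m$-Lipschitz retractions $\varrho_t\colon X_v \to \pi^{-1}(v)$ around each fiber, with $\kappa_m$ depending only on $m$, and proves by induction on length (Proposition~\ref{prop:deforming}) that every arc of $(X,g_t)$ can be deformed, endpoints fixed, into an edge-arc at a uniformly bounded cost $C_m$. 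You flag this as ``the hard part'' but do not supply it, and the estimate $\ent(X,g_\varepsilon)\le C_6\,\varepsilon^{-\nu'}$ depends on it essentially. Once that deformation lemma is in hand, the rest of your counting argument is correct and matches the paper's Proposition~\ref{prop:log}.
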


In Section~\ref{sec:ex}, we give an example of a closed manifold satisfying the assumption of Theorem~\ref{theo:omega=0} with a fiber whose image of the fundamental group is a finitely generated group of intermediate growth (which coincides with the first Grigorchuk group).
Recall that it is an open question whether finitely presented groups of intermediate growth exist.

%\medskip

%One may also wonder whether a similar result to Theorem~\ref{theo:omega=0} holds for connected finite simplicial $m$-complexes~$X$ satisfying the fiber $\pi_1$-growth collapsing assumption without any condition on the subexponential growth rate of the subgroups~$i_*[\pi_1(F_p)] \leqslant \pi_1(X)$.
%In Section~\ref{sec:ex}, we construct an example showing that the natural method presented in the proof of Theorem~\ref{theo:omega=0} consisting of shrinking the fibers of the simplicial map $\pi:X \to P$ involved in the definition of the fiber $\pi_1$-growth collapsing assumption does not readily apply.

\medskip

Since the subexponential growth rate of a group with polynomial growth is zero, we immediately derive the following corollary.

\begin{corollary} \label{coro:polynomial}
Every connected finite simplicial complex satisfying the fiber $\pi_1$-growth collapsing assumption with polynomial growth rate has zero minimal volume entropy.
\end{corollary}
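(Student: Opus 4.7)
The plan is to deduce this immediately from Theorem~\ref{theo:omega=0}; the only content is to verify that polynomial growth forces the subexponential growth rate to be zero, hence strictly below the threshold $\frac{m-k}{m}$ imposed by that theorem.

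First, I would unwind the definitions. Suppose $X$ is a connected finite simplicial $m$-complex satisfying the fiber $\pi_1$-growth collapsing assumption with polynomial growth rate via a simplicial map $\pi:X\to P$ onto a simplicial $k$-complex with $k<m$. For every connected component~$F_p$ of every fiber, $i_*[\pi_1(F_p)]\leqslant \pi_1(X)$ is finitely generated and of polynomial growth, so there exist a finite generating set~$S$ and a polynomial~$Q$ with $|B_S(t)|\leqslant Q(t)$ for all $t\geqslant 0$. Then $\log|B_S(t)|\leqslant \log Q(t) = O(\log t)$, so $\log\log|B_S(t)| = O(\log\log t)$, and therefore
\[
\nu\bigl(i_*[\pi_1(F_p)]\bigr) = \limsup_{t\to\infty}\frac{\log\log |B_S(t)|}{\log t} = 0.
\]
Thus $X$ satisfies the fiber $\pi_1$-growth collapsing assumption with subexponential growth rate at most~$\nu = 0$.

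Since $k<m$, we have $\frac{m-k}{m} > 0 = \nu$, so the hypothesis of Theorem~\ref{theo:omega=0} is satisfied. Applying that theorem yields $\omega(X)=0$, as required. There is no real obstacle here: the corollary is a direct specialization of Theorem~\ref{theo:omega=0}, and the only step is the elementary observation that polynomial growth implies $\nu=0$.
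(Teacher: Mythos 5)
Your proof is correct and follows the paper's argument exactly: the paper also derives this corollary by observing that polynomial growth forces the subexponential growth rate to be zero, so that $\nu = 0 < \frac{m-k}{m}$ and Theorem~\ref{theo:omega=0} applies directly. Your only addition is spelling out the elementary computation $\log\log|B_S(t)| = O(\log\log t)$, which the paper leaves implicit.
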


%Recall that a finitely generated group has polynomial growth if and only if it is virtually nilpotent; see~\cite{gro81}.

%\medskip

As an application of Kapovitch-Wilking's Generalized Margulis Lemma (Theorem~\ref{theo:KW}), see~\cite{KW} and also~\cite{courtois}, Vitali Kapovitch pointed out to us that collapsing with
Ricci curvature bounded below implies the fiber $\pi_1$-growth collapsing assumption; see Proposition~\ref{prop:Ricci} for a more general statement.
%More precisely, a closed Riemannian $m$-manifold~$M$ with $\Ric_M \geq -(m-1)$ and $\vol(M) \leq v_m$ (for some small $v_m>0$) satisfies the $\pi_1$-growth collapsing assumption with polynomial growth rate; see Proposition~\ref{prop:Ricci}.
%Though their result is not yet published, combined with Corollary~\ref{coro:polynomial}, it would immediately imply the following.
Combined with Corollary~\ref{coro:polynomial}, this immediately implies the following.

\begin{corollary} \label{coro:isolation} %[Pending on~\cite{KW}]
For every positive integer~$m$, there exists $v_m>0$ such that every closed Riemannian $m$-manifold~$M$ with $\Ric_M \geq -(m-1)$ and $\vol(M) \leq v_m$ has zero minimal volume entropy.
\end{corollary}

This statement can be seen as a refinement of Gromov's isolation theorem~\cite[\S0.5]{gro82}, which asserts that under the same assumption as Corollary~\ref{coro:isolation} the manifold~$M$ has zero simplicial volume.

\medskip

\forget
Since closed manifolds admitting an $F$-structure satisfy the fiber collapsing assumption with polynomial growth rate, see Definition~\ref{def:F-structure} for the definition of an $F$-structure, we obtain the following result.
Note that this result can be derived from Paternain and Petean's work on the connection between the topological entropy of the geodesic flow and $F$-structures; see~\cite[Theorem~A]{PP03}.

%%%%%%%%%%%%%%%%%%%%%%%%%%%%%%%%%%%%%%%%%%%%%%%%%%%%%%%%%%%%%%%%%%%%%%%%%%%%%%%%%%%%%%%%%%%%%%%%%%%%%%%%%%%%%%%%%%%%%%%%%%%%
\begin{corollary} \label{coro:F-structure}
Every closed manifold admitting an $F$-structure (of possibly zero rank) has zero minimal volume entropy.
\end{corollary}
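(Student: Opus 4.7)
The plan is to deduce the corollary directly from Theorem~\ref{theo:omega=0} by showing that polynomial growth implies subexponential growth rate equal to zero, which then trivially satisfies the hypothesis $\nu < \frac{m-k}{m}$.

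First, I would fix a simplicial map $\pi:X\to P$ realizing the fiber $\pi_1$-growth collapsing assumption with polynomial growth rate, and let $m = \dim X$ and $k = \dim P$, so that $k < m$. For every vertex~$p\in P$ and every connected component~$F_p$ of $\pi^{-1}(p)$, the subgroup $G := i_*[\pi_1(F_p)] \leqslant \pi_1(X)$ is finitely generated and has polynomial growth by assumption.

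Next, I would compute the subexponential growth rate of such a group. Fix a finite generating set $S$ of $G$ and a polynomial $P$ of degree $d$ such that $|B_S(t)| \le P(t)$ for all $t \ge 0$. Then
\[
\log |B_S(t)| \leq \log P(t) = d\log t + O(1),
\]
so
\[
\log\log |B_S(t)| \leq \log\bigl(d\log t + O(1)\bigr) = \log\log t + O(1)
\]
for $t$ large enough. Dividing by $\log t$ and letting $t\to\infty$ yields
\[
\nu(G) = \limsup_{t\to\infty} \frac{\log\log |B_S(t)|}{\log t} \leq 0,
\]
and since the quantity is non-negative (for any infinite group) or the group is finite (in which case the definition is vacuous and one works from the fact that $\log|B_S(t)|$ is eventually constant), we get $\nu(G) = 0$ in every case.

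Consequently, $X$ satisfies the fiber $\pi_1$-growth collapsing assumption with subexponential growth rate at most $\nu = 0$. Since $k < m$, we have $\frac{m-k}{m} \geq \frac{1}{m} > 0 = \nu$, so the hypothesis of Theorem~\ref{theo:omega=0} is fulfilled. Applying Theorem~\ref{theo:omega=0} directly gives $\omega(X)=0$. There is no real obstacle here: the entire content of the corollary is the observation that polynomial growth trumps any positive threshold $\frac{m-k}{m}$ in the subexponential scale.
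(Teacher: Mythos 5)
Your proposal never engages with $F$-structures at all. You begin by saying ``fix a simplicial map $\pi:X\to P$ realizing the fiber $\pi_1$-growth collapsing assumption with polynomial growth rate,'' but the hypothesis of the corollary is only that the closed manifold admits an $F$-structure; the existence of such a map $\pi$ is precisely what must be produced. What you have actually written is a correct (if slightly over-elaborate) proof of Corollary~\ref{coro:polynomial}, the purely group-theoretic statement that polynomial growth forces $\nu = 0 < \frac{m-k}{m}$. The computation $\log\log|B_S(t)| \leq \log\log t + O(1)$ and the division by $\log t$ are fine, and the handling of the finite case is unobjectionable.

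The missing content -- and it is the entire geometric content of the statement -- is the passage from the $F$-structure to the collapsing assumption. The paper does this by invoking the Slice Theorem to see that the orbits of the $F$-structure are closed submanifolds covered by tori, that the trivial orbits form a submanifold of codimension at least one, and that the orbit space is an orbifold of dimension at most $m-1$. One then takes $\pi$ to be (a simplicial approximation of) the projection to the orbit space; the fibers are tori or finite quotients thereof, so their images in $\pi_1(M)$ are almost abelian and hence of polynomial growth. Only after this step is Corollary~\ref{coro:polynomial} (or equivalently your computation plus Theorem~\ref{theo:omega=0}) applicable. Without it, the argument has not used the hypothesis at all, which is a sure sign that something is missing.
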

\forgotten

\forget
The following corollary presents the effect of the fiber collapsing assumption on the simplicial volume.
It is a direct consequence of Theorem~\ref{theo:omega=0} showing that $\omega(M)=0$ and the inequality~\eqref{eq:gro} providing an upper bound on $||M||_\Delta$ in terms of~$\omega(M)$.

%%%%%%%%%%%%%%%%%%%%%%%%%%%%%%%%%%%%%%%%%%%%%%%%%%%%%%%%%%%%%%%%%%%%%%%%%%%%%%%%%%%%%%%%%%%%%%%%%%%%%%%%%%%%%%%%%%%%%%%%%%%%
\begin{corollary} \label{coro:zerovol}
Every closed manifold~$M$ satisfying the fiber collapsing assumption has zero simplicial volume, that is,
\[
||M||_\Delta =0.
\]
\end{corollary}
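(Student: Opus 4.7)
The plan is to deduce this corollary directly from Theorem~\ref{theo:omega=0} applied with subexponential growth rate $\nu = 0$. Two small verifications are needed: first, that a finitely generated group of polynomial growth has subexponential growth rate equal to zero in the sense of Definition~\ref{def:algent}; and second, that the quantitative hypothesis $\nu < (m-k)/m$ then holds automatically.

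For the first verification, suppose $G$ is a finitely generated group with $|B_S(t)| \leq C(1+t)^d$ for some constants $C,d>0$ and every $t \geq 0$. Taking logarithms twice,
\[
\log\log |B_S(t)| \leq \log\bigl( d\log(1+t) + \log C\bigr)
\]
for $t$ large enough. Dividing by $\log t$ and letting $t \to \infty$ yields
\[
\nu(G) = \limsup_{t \to \infty} \frac{\log\log |B_S(t)|}{\log t} = 0.
\]
Applied to each subgroup $i_*[\pi_1(F_p)] \leq \pi_1(X)$, this shows that the fiber $\pi_1$-growth collapsing assumption with polynomial growth rate implies the same assumption with subexponential growth rate at most $\nu = 0$.

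For the second verification, the definition of the fiber $\pi_1$-growth collapsing assumption requires the target complex $P$ to have dimension $k<m$, so $(m-k)/m > 0$. Hence the strict inequality $\nu < (m-k)/m$ from Theorem~\ref{theo:omega=0} is satisfied when $\nu = 0$.

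I would then simply invoke Theorem~\ref{theo:omega=0} to conclude $\omega(X) = 0$. There is essentially no obstacle in this proof: the corollary is a direct specialization of the preceding theorem, and the only subtle point is to notice that the double logarithm in the definition of $\nu$ forces the subexponential growth rate to vanish for any polynomial growth function, rather than merely to be strictly smaller than one.
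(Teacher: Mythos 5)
Your proposal proves a different statement from Corollary~\ref{coro:zerovol}, on two counts. First, the conclusion of Corollary~\ref{coro:zerovol} is that the \emph{simplicial volume} $\Vert M \Vert_\Delta$ vanishes, not that the minimal volume entropy $\omega(M)$ vanishes; your argument ends with $\omega(X)=0$ and never addresses $\Vert M \Vert_\Delta$. Second, and more seriously, the hypothesis of the corollary is the plain fiber $\pi_1$-growth \emph{collapsing assumption}, which only requires the subgroups $i_*[\pi_1(F_p)] \leqslant \pi_1(X)$ to have \emph{subexponential} growth, with no bound on the subexponential growth rate~$\nu$. You have effectively specialized to the collapsing assumption \emph{with polynomial growth rate} (which is the hypothesis of Corollary~\ref{coro:polynomial}, a different result). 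Your computation that a polynomial growth function yields $\nu=0$ is correct but irrelevant here: a fiber subgroup of subexponential growth may well have $\nu$ close to~$1$ (e.g.\ the Grigorchuk group has $\nu \approx 0.767$), in which case the hypothesis $\nu < (m-k)/m$ of Theorem~\ref{theo:omega=0} can fail, and indeed the paper explicitly leaves open whether the plain collapsing assumption implies $\omega(M)=0$. So even if you intended to combine $\omega(M)=0$ with the inequality~\eqref{eq:gro} to deduce $\Vert M \Vert_\Delta = 0$, the first step is not available.

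The paper's proof bypasses minimal volume entropy altogether: it uses the classical fact that every finitely generated group of subexponential growth is amenable, combines it with Proposition~\ref{prop:cover} (the collapsing assumption is equivalent to admitting an open cover of multiplicity at most~$m$ by subsets of subexponential $\pi_1$-growth, hence by amenable subsets), and then invokes Gromov's vanishing theorem for simplicial volume (Theorem~\ref{theo:vanishing}). That route uses the full strength of the subexponential-growth hypothesis rather than the quantitative bound on~$\nu$, which is precisely what makes it work where your route does not.
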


An alternate proof via amenable coverings which does not rely on Theorem~\ref{theo:omega=0} is given in Section~\ref{sec:Gromov}; see Remark~\ref{rem:amenable}.

\medskip
\forgotten

The second condition considered for~$X$ is the fiber $\pi_1$-growth non-collapsing assumption (or non-collapsing assumption for short).

\medskip

\textbf{Fiber $\pi_1$-growth non-collapsing assumption (FNCA).}
Let $X$ be a finite connected simplicial $m$-complex.
Suppose that for every simplicial map \mbox{$\pi:X \to P$} onto a simplicial complex~$P$ of dimension~$k<m$, there exists a connected component~$F_{p_0}$ of some fiber~$\pi^{-1}(p_0)$ with $p_0 \in P$ such that the finitely generated subgroup~$i_*[\pi_1(F_{p_0})] \leqslant \pi_1(X)$ has uniform exponential growth at least~$h$ for some $h=h(X)>0$ depending only on~$X$.
%Recall that $m$ is the dimension of~$X$.

\medskip

This topological condition ensures that the minimal volume entropy of~$X$ does not vanish.

\begin{theorem} \label{theo:omega>0}
Let $m \geq 3$.
Every connected finite simplicial $m$-complex~$X$ with thick fundamental group satisfying the fiber $\pi_1$-growth non-collapsing assumption has positive minimal volume entropy, that is, 
%Let $M$ be a connected closed $m$-manifold.
%Suppose that for every simplicial map $\pi:M \to P$ to a finite simplicial complex~$P$ of dimension~$m-1$, where every fiber~$F_p$ is connected, there exists $p_0 \in P$ such that the finitely presented subgroup~$i_*[\pi_1(F_{p_0})]$ of~$\pi_1(M)$ has uniform exponential growth.
%Then
\[
\omega(X) > 0.
\]
\end{theorem}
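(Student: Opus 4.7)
The plan is to prove the contrapositive: assuming $\omega(X)=0$, we construct, for any $\varepsilon>0$, a simplicial map from $X$ to a lower-dimensional simplicial complex every fiber component of which has image in~$\pi_1(X)$ of algebraic entropy at most~$\varepsilon$, thereby contradicting the uniform lower bound $h=h(X)>0$ supplied by the non-collapsing assumption.

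Suppose $\omega(X)=0$ and pick a sequence of piecewise Riemannian metrics~$g_n$ on~$X$ with $\ent(g_n)\,\vol(g_n)^{1/m}\to 0$. After rescaling each $g_n$ by the factor $\vol(g_n)^{-1/m}$, we may assume $\vol(g_n)=1$ and $\ent(g_n)\to 0$. The next step---and what I expect to be the main obstacle---is a Urysohn-type width bound adapted to finite simplicial $m$-complexes: after possibly subdividing the triangulation of~$X$, one produces a simplicial map $\pi_n\colon X\to P_n$ onto a simplicial complex $P_n$ of dimension at most~$m-1$ whose fibers are uniformly small, namely
\[
\diam_{g_n}\bigl(\pi_n^{-1}(p)\bigr) \le C_m\,\vol(g_n)^{1/m} = C_m
\]
for every $p\in P_n$, where $C_m$ depends only on~$m$. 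For closed manifolds this is Gromov's Urysohn width inequality (together with subsequent refinements), and for finite simplicial $m$-complexes one plans to proceed via a nerve-of-good-cover construction followed by simplicial approximation. The delicate point is to combine controlled fiber size with genuine simpliciality of~$\pi_n$, which is precisely the class of maps appearing in the non-collapsing hypothesis.

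Granting this width bound, the non-collapsing assumption applied to~$\pi_n$ yields a connected component $F_n$ of some fiber such that $\Gamma_n := i_*\pi_1(F_n)\le\pi_1(X)$ has uniform exponential growth at least~$h$. Fix a lift $\tilde x_n\in\tilde X$ of a basepoint of~$F_n$ and let $\tilde F_n$ denote the component of the preimage of~$F_n$ in~$\tilde X$ containing~$\tilde x_n$. The deck action of~$\Gamma_n$ on~$\tilde F_n$ is cocompact with quotient~$F_n$ of $g_n$-diameter at most~$C_m$, so the Shvarts--Milnor lemma gives a generating set
\[
S_n = \bigl\{\gamma\in\Gamma_n : d_{g_n}(\tilde x_n,\gamma\tilde x_n)\le 2C_m\bigr\}
\]
of~$\Gamma_n$, and an orbit-counting argument in~$\tilde X$ yields $|B_{S_n}(R)| \le C'\,\vol_{g_n}\bigl(\tilde B(2C_m R)\bigr)$ for a constant $C'>0$. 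Taking logarithms, dividing by~$R$, and letting $R\to\infty$, we obtain
\[
h \le \ent(\Gamma_n,S_n) \le 2C_m\,\ent(X,g_n) \longrightarrow 0,
\]
the desired contradiction. Hence $\omega(X)>0$.
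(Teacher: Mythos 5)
Your proof is correct and follows essentially the same route as the paper's argument for Theorem~\ref{theo:B}: the Urysohn width bound for finite simplicial $m$-complexes (Theorem~\ref{theo:width}, combined with Proposition~\ref{prop:width} to upgrade the width-realizing map to a genuine simplicial map with connected fibers), the non-collapsing hypothesis to single out a fiber with uniform exponential $\pi_1$-growth, and a \v{S}varc--Milnor estimate relating the growth of that subgroup to $\ent(X,g)$. The paper obtains the quantitative lower bound $\omega(X)\geq h(X)/(2C'_m)$ directly rather than by contradiction, but this is a cosmetic difference. The one place where your sketch is a little imprecise is the orbit-counting inequality $|B_{S_n}(R)|\le C'\,\vol_{g_n}(\tilde B(2C_m R))$: the fundamental-domain argument actually yields $|B_{S_n}(R)|\le \vol_{g_n}(\tilde B(2C_m R+\diam(X,g_n)))$, where the additive shift depends on~$n$ and is not absorbed into a multiplicative constant. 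This does not affect the conclusion $\ent(\Gamma_n,S_n)\le 2C_m\,\ent(X,g_n)$, since the shift vanishes in the exponential growth rate as $R\to\infty$; nevertheless, the paper avoids the issue altogether by using the equivalent characterization of $\ent(X,g)$ as the exponential growth rate of the number of homotopy classes represented by loops of bounded length (equation~\eqref{eq:entN}), which feeds directly into Proposition~\ref{prop:diam-ent}. Finally, when invoking \v{S}varc--Milnor, be careful that you need the generating set controlled by the extrinsic $g_n$-diameter of~$F_n$ and the distance in $\tilde X$ (this is what the width theorem and Proposition~\ref{prop:diam-ent}, via Gromov's short-loop lemma, supply), not by the intrinsic diameter of~$F_n$ and the path metric of~$\tilde F_n$, which is what a naive application of \v{S}varc--Milnor to the $\Gamma_n$-action on $\tilde F_n$ would give and which the width theorem does not bound.
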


It follows that the simplicial complex~$X$ in Theorem~\ref{theo:omega>0} has small enough volume, its minimal volume entropy is bounded away from zero.
This result still holds true if the unit balls of~$X$ (instead of the whole simplicial complex~$X$) have small enough volume; see Remarks~\ref{rem:entUW} and~\ref{rem:entUW2}.

\medskip

As showed in Section~\ref{sec:exthick}, closed aspherical manifolds whose fundamental group is a non-elementary word hyperbolic group satisfy the conditions of Theorem~\ref{theo:omega>0}.

\medskip

Note that the fibers of the simplicial map $\pi:X \to P$ in the definition of the fiber collapsing and non-collapsing conditions can always be assumed to be connected; see Proposition~\ref{prop:connected}.

\medskip

The definitions of the fiber collapsing and fiber non-collapsing assumptions are exclusive but not complementary in general.
However, every simplicial complex with a thick fundamental group satisfies either the fiber collapsing assumption or the fiber non-collapsing assumption; see Proposition~\ref{prop:alternative}.
This leads to a complete characterization of spaces with positive minimal volume entropy for finite simplicial complexes whose fundamental group is thick with no subgroup of intermediate growth.

\begin{corollary}
Let $X$ be connected finite simplicial $m$-complex with $m \geq 3$ whose fundamental group is thick with no subgroup of intermediate growth.
Then, either $X$ satisfies the fiber collapsing assumption, in which case its minimal volume entropy is zero, or $X$ satisfies the fiber non-collapsing assumption, in which case its minimal volume entropy is positive.
\end{corollary}

%However, for a generic class of finitely presented groups~$G$, namely $2$-free finitely presented groups, see~\cite{AO96}, every finite simplicial complex of fundamental group~$G$ satisfies either FCA or FNCA.
%For such classes of simplicial complexes where the dichotomy between FCA and FNCA holds, Theorem~\ref{theo:omega=0} and Theorem~\ref{theo:omega>0} provide a topological characterization of simplicial complexes with zero/positive minimal volume entropy; see~\cite{BC} for further examples.

We also give alternative formulations of both the fiber collapsing and non-collapsing assumptions in terms of open coverings of the simplicial complex~$X$, namely, the covering collapsing assumption (CCA) and the the covering non-collapsing assumption (CNCA); see Proposition~\ref{prop:cover} and Proposition~\ref{prop:exp.cover}.
This yields a result similar to Theorem~\ref{theo:omega>0} which also applies to simplicial complexes with non-thick fundamental group; see Theorem~\ref{theo:B.bis}.

\medskip

\forget
Though Theorem~\ref{theo:omega>0} and Theorem~\ref{theo:omega=0} are expressed in almost complementary terms, they do not provide a complete topological characterization of simplicial complexes with positive minimal volume entropy.
As an additional comment, let us mention that the distinction between exponential growth and uniform exponential growth for finitely generated groups is quite subtle.
Examples of finitely generated groups of exponential growth which do not have uniform exponential growth were first constructed by Wilson~\cite{Wilson04}, answering a question of Gromov.
Still, it is an open question whether all \emph{finitely presented} groups of exponential growth have uniform exponential growth.
\forgotten

%Note however that even though finitely generated groups of exponential growth but non-uniform exponential growth exist, see~\cite{wilson}, it is still an open quesiton whether all finitely presented with exponential growth have uniform exponential growth.

%\medskip

%In case of an affirmative answer, we could drop the adjective ``uniform" in Theorem~\ref{theo:omega>0}.
%Thus, the assumptions in Theorem~\ref{theo:omega=0} and Theorem~\ref{theo:omega>0} would be the negation of each other and the two theorems would provide a complete topological characterization of closed manifolds with positive minimal entropy.

%A REVOIR. The importance of this notion was first noticed by Efremovich~\cite{Efrem53}.
%Subsequently, \v{S}varc~\cite{Shvarts55} and Milnor~\cite{Milnor68} related the growth of the volume of balls in the universal cover~$\widetilde{M}$ to the growth of the fundamental group~$\pi_1(M)$ of~$M$.
%The connexion with the dynamics of the geodesic flow was established by Dinaburg~\cite{Dinaburg71} and Manning~\cite{Manning79}.
%More specifically, the volume entropy bounds from below the topological entropy of the geodesic flow on a closed Riemannian manifold and the two notions coincide when the manifold is nonpositively curved; see~\cite{Manning79}.

The techniques developed in this article allow us to investigate the relationship between the minimal volume entropy and the simplicial volume of simplicial complexes whose fundamental class is well-defined.
In view of the lower and upper bounds~\eqref{eq:BS}, one can ask whether there is a complementary inequality to the bound~\eqref{eq:gro}.
Namely, does there exist a positive constant~$C_m$ such that
\[
\omega(M)^m \leq C_m \, \Vert M \Vert_\Delta
\]
for every connected closed orientable $m$-manifold~$M$?
The question also makes sense for every connected finite simplicial $m$-complex~$X$ whose fundamental class is well-defined.
Our next result provides a negative answer in this case.

\begin{proposition} \label{prop:ex}
There exists a sequence of connected finite simplicial complexes~$X_n$ with a well-defined fundamental class such that the simplicial volume of~$X_n$ vanishes for all $n \in \N$ and the minimal volume entropy of~$X_n$ tends to infinity.
\end{proposition}

%This result is obtained by constructive methods using Theorem~\ref{theo:omega>0}. 
%It can also be deduced from the subsequent techniques developed in Theorem~\ref{theo:3}.

We emphasize that both Theorem~\ref{theo:omega=0} and Theorem~\ref{theo:omega>0} hold for the class of finite simplicial complexes (including compact ${\rm CAT}(0)$ simplicial or cubical complexes) and not solely for closed manifolds.
This contrasts with all previous works, which focus on closed manifolds.
In particular, the topological conditions ensuring the positivity of the minimal volume entropy, see Theorem~\ref{theo:omega>0}, apply to simplicial complexes for which the simplicial volume is zero and the inequality~\eqref{eq:gro} does not readily extend.
This is exemplified by Proposition~\ref{prop:ex}. 

\medskip

Since a first version of this work appeared as the first part of our preprint~\cite{BS2} (before we extended it and decided to split it), the results established in this article have already  found applications in~\cite{BC} and~\cite{LM}. 

\medskip

\emph{Acknowledgment.} The second author would like to thank the Fields Institute and the Department of Mathematics at the University of Toronto for their hospitality while part of this work was completed. 
We express our gratitude to Rostislav Grigorchuk for multiple stimulating discussions and to Vitali Kapovitch for pointing out to us a connection to collapsing with Ricci curvature bounded from below.
Finally, we thank Corey Bregman and Matt Clay who pointed out a mistake in a previous version of this article and drew our attention on their recent work~\cite{BC}.

%%%%%%%%%%%%%%%%%%%%%%%%%%%%%%%%%%%%%%%%%%%%%%%%%%%%%%%%%%%%%%%%%%%%%%%%%%%%%%%%%%%%%%%%%%%%%%%%%%%%%%%%%%%%%%%%%%%%%%%%%%%%
\section{Simplicial complexes with zero minimal volume entropy}
%%%%%%%%%%%%%%%%%%%%%%%%%%%%%%%%%%%%%%%%%%%%%%%%%%%%%%%%%%%%%%%%%%%%%%%%%%%%%%%%%%%%%%%%%%%%%%%%%%%%%%%%%%%%%%%%%%%%%%%%%%%%

In this section, we first introduce the covering collapsing assumption and show that it is equivalent to the fiber growth collapsing assumption.
Then, we show the central result of this section, namely, the minimal volume entropy of a finite simplicial complex satisfying the fiber growth collapsing assumption with small subexponential growth rate vanishes.
Several examples of manifolds satisfying the fiber growth collapsing assumption are presented throughout this section.
We conclude this section with an extension of Gromov's isolation theorem.

%%%%%%%%%%%%%%%%%%%%%%%%%%%%%%%%%%%%%%%%%%%%%%%%%%%%%%%%%%%%%%%%%%%%%%%%%%%%%%%%%%%%%%%%%%%%%%%%%%%%%%%%%%%%%%%%%%%%%%%%%%%%
\subsection{Covering collapsing assumption} \label{sec:zerovol}
%%%%%%%%%%%%%%%%%%%%%%%%%%%%%%%%%%%%%%%%%%%%%%%%%%%%%%%%%%%%%%%%%%%%%%%%%%%%%%%%%%%%%%%%%%%%%%%%%%%%%%%%%%%%%%%%%%%%%%%%%%%%

\mbox { }

\medskip

%In this section, we give a characterization of the fiber collapsing assumption in terms of open coverings.
%This characterization is similar to the description of the Urysohn width in terms of either fiber diameters or %open coverings; see Proposition~\ref{prop:UW}.
We begin with the following definition.

\begin{definition} \label{def:subexp}
A path-connected open subset~$U$ of a path-connected topological space~$X$ has \emph{subexponential $\pi_1$-growth (resp. polynomial $\pi_1$-growth) in~$X$} if the subgroup $\Gamma_U:=i_*[\pi_1(U)]$ of~$\pi_1(X)$ has subexponential growth (resp. polynomial growth), where $i:U \hookrightarrow X$ is the inclusion map.
In this case, the subexponential $\pi_1$-growth rate of~$U$ in~$X$ is defined as the subexponential growth rate of~$\Gamma_U$.
\end{definition}

\textbf{Covering collapsing assumption (CCA).}
Let $X$ be a finite connected simplicial $m$-complex.
Suppose there exists a covering of~$X$ of multiplicity at most~$m$ by open subsets of subexponential $\pi_1$-growth in~$X$ (with subexponential growth rate at most~$\nu$ or polynomial growth rate).

\medskip

The following classical result implies that the notions of collapsing in terms of open coverings (CCA) or of fiber growth (FCA) are equivalent.

%allows us to replace the assumption of covering collapsing with the more manageable notion of fiber collapsing.

\begin{proposition} \label{prop:cover}
A connected finite simplicial $m$-complex~$X$ admits a covering of multiplicity~$k+1$ by open subsets of subexponential $\pi_1$-growth in~$X$ (with subexponential growth rate at most~$\nu$ or polynomial growth rate) if and only if there exists a simplicial map $\pi:X \to P$ onto a simplicial $k$-complex such that for every connected component~$F_p$ of every fiber~$\pi^{-1})(p)$, the subgroup $i_*[\pi_1(F_p)] \leqslant \pi_1(X)$ has subexponential growth (with subexponential growth rate at most~$\nu$ or polynomial growth rate).
%
%Similarly, a connected finite simplicial $m$-complex~$X$ admits a covering of multiplicity~$k+1$ by open subsets of polynomial $\pi_1$-growth in~$X$ if and only if there exists a simplicial map $\pi:X \to P$ onto a simplicial $k$-complex such that for every connected component~$F_p$ of every fiber~$\pi^{-1})(p)$, the subgroup $i_*[\pi_1(F_p)] \leqslant \pi_1(X)$ has polynomial growth.
\end{proposition}

\begin{proof}
Suppose that $X$ satisfies the fiber collapsing assumption.
Then there exists a simplicial map $\pi:X \to P$ onto a simplicial $k$-complex~$P$ such that for every connected component~$F_p$ of every fiber~$\pi^{-1}(p)$, where $p$ is a vertex of~$P$, the subgroup $i_*[\pi_1(F_p)]$ of~$\pi_1(X)$ has subexponential growth (resp. polynomial growth).
Since $P$ is a finite simplicial complex of dimension~$k$, the open covering formed by the open stars~$\textrm{st}(p) \subseteq P$ of the vertices~$p$ of~$P$ has multiplicity~$k+1$.
The connected components of the preimages~$\pi^{-1}(\textrm{st}(p)) \subseteq X$ of these open stars form an open covering of~$X$ with the same multiplicity~$k+1$ as the previous covering of~$P$.
Furthermore, the open subsets of this open covering of~$X$ strongly deformation retract onto the connected components~$F_p$ of the fibers~$\pi^{-1}(p)$.
In particular, they have subexponential $\pi_1$-growth in~$X$ with the same subexponential growth rate as the subgroups induced by the fibers (resp. polynomial growth). 
This proves the first implication. 

\medskip

\forget
Consider the first barycentric subdivision~$\textrm{sd}^1(P)$ of~$P$ and denote by~$\textrm{st}^1(p)$ the star of a vertex~$p$ for this subdivision.
The open subsets $\textrm{st}^1_\varepsilon(p)$ defined as the $\varepsilon$-neighborhoods of~$\textrm{st}^1(p)$ (with respect to the metric on~$P$ where every simplex of the subdivision is isometric to the standard simplex) for $p$ lying in the vertex set~$V(P)$ of~$P$ form an open covering of~$P$.
For $\varepsilon$ small enough, the multiplicity of this covering is equal to $k+1 \leq m$.
The connected components of the preimages~$\pi^{-1}(\textrm{st}^1_\varepsilon(p))$ of these open subsets form an open covering~$\{ U_i \}$ of~$X$ with the same multiplicity $k+1 \leq m$ as the previous covering of~$P$.
For $\varepsilon$ small enough, every open subset~$U_i$ strongly deformation retracts onto a connected component~$C_i$ of some fiber~$\pi^{-1}(p)$ with $p \in V(P)$.
Thus, the open subsets of the covering~$\{ U_i \}$ have subexponential $\pi_1$-growth in~$X$. 
This prove the first implication. \\
\forgotten

For the converse implication, let $\{ U_i \}_{i=0,\dots,s}$ be a covering of~$X$ of multiplicity~$k+1$ by open subsets of subexponential $\pi_1$-growth (resp. polynomial $\pi_1$-growth) in~$X$.
Take a partition of unity~$\{ \phi_i \}$ of~$X$, where each function~$\phi_i:X \to [0,1]$ has its support in~$U_i$.
Consider the map $\Phi:X \to \Delta^{s}$ defined by
\[
\Phi(x) = (\phi_0(x),\dots,\phi_s(x))
\]
in the barycentric coordinates of~$\Delta^{s}$.
The nerve~$P$ of the covering~$\{ U_i \}$ is a simplicial complex with one vertex~$v_i$  for each open set~$U_i$, where $v_{i_0},\dots,v_{i_n}$ span an $n$-simplex of~$P$ if and only if the intersection~$\cap_{j=1}^n U_{i_j}$ is nonempty.
By construction, the dimension of the nerve~$P$ is one less than the multiplicity of the covering~$\{ U_i \}$.
That is, $\dim P = k$.
We identify in a natural way the vertices~$\{ v_i \}$ of~$P$ with the vertices of~$\Delta^{s}$.
With this identification, the nerve~$P$ of~$X$ lies in~$\Delta^{s}$.
Furthermore, the image of~$\Phi$ lies in~$P$.
By~\cite[\S2.C]{hatcher}, subdividing~$X$ and~$P$ if necessary, we can approximate $\Phi:X \to P$ by a simplicial map $\pi:X \to P$ close to~$\Phi$ for the $C^0$-topology, whose normalized barycentric coordinates $\pi_i:X \to [0,1]$ have their support in~$U_i$.
Thus, every fiber~$\pi^{-1}(p)$ lies in one of the open subsets~$U_i$.
Therefore, for every connected component~$F_p$ of~$\pi^{-1}(p)$, the subgroup~$i_*[\pi_1(F_p)]$ lies in some subgroup~$i_*[\pi_1(U_i)]$.
Since the open subsets~$U_i$ have subexponential $\pi_1$-growth (resp. polynomial $\pi_1$-growth) in~$X$, the subgroups~$i_*[\pi_1(F_p)]$ have subexponential growth with a subexponential growth rate bounded by the one of the subsets of the open covering (resp. polynomial growth) and the simplicial complex~$X$ satisfies the fiber collapsing assumption as required.
\forget
Suppose that the covering~$\mathcal{U}$ is minimal (\ie, every proper subfamily of~$\mathcal{U}$ does not cover~$X$).
Take a partition of unity~$\{ \phi_i \}$ subordinate to the open covering~$\mathcal{U} = \{U_i \}$.
The open subset 
\[
V_i \{ x \in U_i \mid \phi_i(x) > \frac{1}{2s} \}
\]
form an open covering~$\mathcal{V}$ of~$X$, which is finer than~$\mathcal{U}$.
Without loss of generality, we can assume that the open subsets of~$\mathcal{V}$ are connected, otherwise take the finer covering composed of the connected components of the subsets of~$\mathcal{V}$.
We can also assume that the covering is minimal.
Note that the multiplicity of~$\mathcal{V}$ is bounded by the multiplicity of~$\mathcal{U}$.
Thus, the multiplicity of~$\mathcal{V}$ is at most~$n$.
Since the covering~$\mathcal{V}$ is finer than~$\mathcal{U}$, its subsets have subexponential $\pi_1$-growth in~$X$.
Consider a subdivision~$\Theta$ of~$X$ fine enough so that every simplex~$\tau$ of~$\Theta$ intersecting an open subset~$V_i$ of~$\mathcal{V}$ lies in the corresponding open subset~$U_i$ of~$\mathcal{U}$.
Take a partition of unity~$\{ \psi_i \}$ subordinate to the open covering~$\mathcal{V} = \{V_i \}$.
Consider the map $\Psi:X \to \Delta^{s-1}$ defined by
\[
\Psi(x) = (\psi_1(x),\cdots,\psi_s(x))
\]
in the barycentric coordinates of~$\Delta^{s-1}$.
The nerve~$P$ of the covering~$\{ V_i \}$ is a simplicial complex with one vertex~$v_i$  for each open set~$V_i$, where $v_{i_0},\cdots,v_{i_k}$ span a $k$-simplex of~$P$ if and only if the intersection~$\cap_{j=1}^k V_{i_j}$ is nonempty.
By construction, the dimension of the nerve~$P$ is one less than the multiplicity of the covering~$\mathcal{V}$.
That is, $\dim P \leq m-1$.

Let us construct a simplicial approximation of~$\Psi$.
\forgotten
\end{proof}

An illustration of the characterization of the fiber collapsing assumption in terms of open coverings is given by the following example.

\begin{example}
For $i=1,2$, let $M_i$ be a connected closed manifold of dimension~$m \geq 3$ with fundamental group~$\pi_1(M_i)$ of subexponential growth rate at most~$\nu < \frac{m-1}{m}$.
Let $N$ be a connected closed $n$-manifold embedded both in~$M_1$ and~$M_2$ with $n \leq m-3$.
Suppose that the embedding $N \subseteq M_i$ induces a $\pi_1$-monomorphism and that its normal fiber bundle $\mathcal{N}_i(N) \subseteq TM_i$ is trivial for $i=1,2$.
Define the $m$-manifold
\[
X = (M_1 \setminus U_1(N)) \mathop{\cup}_{N \times S^{m-n-1}} (M_2 \setminus U_2(U))
\]
where $U_i(N)$ is a small tubular neighborhood of~$N$ in~$M_i$.
By van Kampen's theorem, $\pi_1(M_i \setminus U_i(N))$ is isomorphic to~$\pi_1(M_i)$, and thus has subexponential growth rate at most~$\nu$.
Take a small tubular neighborhood~$U_i$ of~$M_i \setminus U_i(N)$ in~$X$ for $i=1,2$.
Since $U_i$ strongly deformation retracts onto~$M_i \setminus U_i(N)$, its fundamental group~$\pi_1(U_i)$ is isomorphic to~$\pi_1(M_i \setminus U_i(N))$.
This yields a covering of~$X$ of multiplicity two by open subsets~$U_1$ and~$U_2$ with subexponential $\pi_1$-growth at most~$\nu$ in~$X$.
According to Proposition~\ref{prop:cover}, the closed $m$-manifold~$X$ satisfies the fiber collapsing assumption.
Note however that the fundamental group of~$X$ has exponential growth in general.
This construction provides numerous examples of closed essential manifolds with a fundamental group of exponential growth and zero minimal volume entropy. 
For instance, when $N$ is reduced to a singleton, the manifold~$X$ is the connected sum~$M_1 \# M_2$ of~$M_1$ and~$M_2$.
This special case can also be recovered from~\cite[Theorem~2.8]{BS}.
\end{example}

%%%%%%%%%%%%%%%%%%%%%%%%%%%%%%%%%%%%%%%%%%%%%%%%%%%%%%%%%%%%%%
%%%%%%%%%%%%%%%%%%%%%%%%%%%%%%%%%%%%%%%%%%%%%%%%%%%%%%%%%%%%%%

%%%%%%%%%%%%%%%%%%%%%%%%%%%%%%%%%%%%%%%%%%%%%%%%%%%%%%%%%%%%%%%%%%%%%%%%%%%%%%%%%%%%%%%%%%%%%%%%%%%%%%%%%%%%%%%%%%%%%%%%%%%%
\subsection{Connected and non-connected fibers}
%%%%%%%%%%%%%%%%%%%%%%%%%%%%%%%%%%%%%%%%%%%%%%%%%%%%%%%%%%%%%%%%%%%%%%%%%%%%%%%%%%%%%%%%%%%%%%%%%%%%%%%%%%%%%%%%%%%%%%%%%%%%

\mbox { }

\medskip

The following result shows that we can assume that the fibers of the simplicial map $\pi:X \to P$ in the definition of the fiber collapsing and non-collapsing conditions are connected.

\begin{proposition} \label{prop:connected}
Let $\pi:X \to P$ be a simplicial map between two finite simplicial complexes.
Denote by~$k$ the dimension of~$P$.
Then there exists a surjective simplicial map $\bar{\pi}:X \to \bar{P}$ to a finite simplicial complex~$\bar{P}$ of dimension at most~$k$ such that the fibers of $\bar{\pi}:X \to \bar{P}$ agree with the connected components of the fibers of $\pi:X \to P$.
\end{proposition}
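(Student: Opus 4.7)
My plan is to construct $\bar{P}$ by unfolding $P$ along the connected components of the fibers: for each simplex $\sigma$ of $P$ (after possibly subdividing $X$ and $P$ barycentrically, in a compatible way) and each connected component $C$ of the open preimage $\pi^{-1}(\mathrm{int}(\sigma))$, include in $\bar{P}$ an abstract copy $\sigma_C$ of $\sigma$. The map $\bar{\pi}$ sends any point $x$ belonging to the open simplex $\mathrm{int}(\mu)$ of $X$, where $\pi(\mu) = \sigma$, to the corresponding point of $\sigma_{C(x)}$ under the affine identification induced by $\pi|_\mu$; here $C(x)$ is the component of $\pi^{-1}(\mathrm{int}(\sigma))$ containing $\mathrm{int}(\mu)$.

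The combinatorial heart of the proof is to prescribe the face relations consistently. Given a face $\tau \subset \sigma$, the $\tau$-face of $\sigma_C$ is declared to be $\tau_{C'}$, where $C'$ is the unique component of $\pi^{-1}(\mathrm{int}(\tau))$ containing $\overline{C} \cap \pi^{-1}(\mathrm{int}(\tau))$. To establish uniqueness of $C'$, I would use the following observation: since $C$ is the union of those open simplices $\mathrm{int}(\mu)$ of $X$ with $\pi(\mu) = \sigma$, connectedness of $C$ means that any two such simplices are joined by a chain along face inclusions inside $C$. The simplicial-map assignment $\mu \mapsto \mu''$, where $\mu''$ is the face of $\mu$ spanned by the vertices mapping to $V(\tau)$ (the unique face of $\mu$ satisfying $\pi(\mu'') = \tau$), is functorial with respect to face inclusion, so it sends such a chain to a chain in $\pi^{-1}(\mathrm{int}(\tau))$, which therefore lies in a single component.

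From the construction, $\bar{\pi}$ is surjective and simplicial, $\bar{P}$ is finite with $\dim \bar{P} \leq k = \dim P$, and two points of $X$ have the same $\bar{\pi}$-image exactly when they lie in the same connected component of the same fiber of $\pi$. The main technical obstacle is to guarantee that $\bar{P}$ is an abstract simplicial complex in the strict sense, that is, that distinct pairs $(\sigma, C) \neq (\sigma, C')$ yield simplices with distinct vertex sets in $\bar{P}$. Without subdivision this may fail (for instance, when $X$ is the boundary of a $2$-simplex folding $2$-to-$1$ onto a single edge of $P$, both copies of that edge in $\bar{P}$ would share both endpoints). A single barycentric subdivision of $P$ with compatible subdivision of $X$ introduces intermediate vertices that resolve every such coincidence, and it is with respect to this subdivision that the above construction is carried out.
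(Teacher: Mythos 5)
Your proposal is essentially the same construction as the paper's: the paper defines $\bar P = X/\!\sim$ where $x\sim y$ iff $x,y$ lie in the same connected component of the same fiber, and then asserts that this quotient is a simplicial complex of the same dimension; your ``abstract copies $\sigma_C$ with prescribed face relations'' is precisely an explicit combinatorial description of that quotient. The one substantive difference is that you notice the quotient need not be a simplicial complex in the strict sense without subdivision, and your counterexample (fold $\partial\Delta^2$ onto an edge by $a,c\mapsto 0$, $b\mapsto 1$: the quotient has two $1$-simplices sharing the same two vertices) is correct. The paper glosses over this point, so your remark genuinely tightens the argument. Your uniqueness argument for the lower face component $C'$ via the face-monotone assignment $\mu\mapsto\mu''$ is sound (a small slip: $\mu''$ is the \emph{largest} face of $\mu$ mapping onto $\tau$, not the unique one — any subface of $\mu''$ spanning all of $V(\tau)$ also surjects — but the construction you actually use, taking all vertices of $\mu$ over $V(\tau)$, is the right one and the monotonicity $\nu\le\mu\Rightarrow\nu''\le\mu''$ holds). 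Your claim that a single barycentric subdivision suffices is also correct: after subdivision every simplex $\tau$ of $P$ has a unique top vertex $b(\sigma_{\max})$ lying in $\mathrm{int}(\sigma_{\max})$, and over the contractible $\mathrm{int}(\sigma_{\max})$ the restriction of $\pi$ is a trivial bundle, so the component of $\pi^{-1}(b(\sigma_{\max}))$ attached to a copy $\tau_C$ already determines $C$; hence distinct copies of $\tau$ have distinct vertex sets.
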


\begin{proof}
Without loss of generality, we can assume that the simplicial map $\pi:X \to P$ is onto.
Define $\bar{P} = X / \! \sim$ as the quotient space of~$X$, where $x \sim y$ if $x$ and~$y$ lie in the same connected component of a fiber of~$\pi:X \to P$.
Since the map $\pi:X \to P$ is simplicial, the quotient space~$\bar{P}$ is a simplicial complex of the same dimension as~$P$.
By construction, the map $\pi:X \to P$ factors out through a simplicial map $\bar{\pi}:X \to \bar{P}$ whose fibers agree with the connected components of the fibers of~$\pi:X \to P$.
\end{proof}

%%%%%%%%%%%%%%%%%%%%%%%

\subsection{Construction of a family of piecewise flat metrics} \label{sec:g_t} % via simplicial embeddings into an Euclidean space.}

\mbox { }

\medskip

Let $\pi:X \to P$ be simplicial map from a connected finite simplicial $m$-complex~$X$ to a simplicial $k$-complex~$P$ with~$k<m$.
We will assume that the map $\pi:X \to P$ is onto and that its fibers~$F_p$ are connected; see Proposition~\ref{prop:connected}.

\medskip

The goal of this section is to construct a family of piecewise flat metrics~$g_t$ on~$X$ which collapses onto~$P$ (\ie, for which the map $\pi:X \to P$ is $1$-Lipschitz and the length of its fibers goes to zero).
The construction relies on some simplicial embeddings of~$X$ and~$P$ into an Euclidean space~$E$ of large dimension.

\medskip

Let $\Delta^s=\Delta^s(p_0,\dots,p_s)$ be the abstract $s$-simplex with the same vertices $p_0,\dots,p_s$ as~$P$.
Fix an $(s+1)$-dimensional Euclidean space~$H$ with an orthonormal basis $e_0,\dots,e_s$.
Identify the abstract $s$-simplex~$\Delta^s$ with the regular $s$-simplex of~$H$ with vertices $\frac{1}{\sqrt{2}} e_0,\dots,\frac{1}{\sqrt{2}} e_s$.
Define the subcomplex
\[
R_i = \pi^{-1}(p_i) \subseteq X.
\]
As previously, let $\Delta(R_i)$ be the abstract simplex with the same vertices as~$R_i$.
Denote by~$m_i$ the dimension of~$\Delta(R_i)$.
Fix an $(m_i+1)$-dimensional Euclidean space~$H_i$ with an orthonormal basis $e_0^i,\dots,e_{m_i}^i$.
Identify the abstract $m_i$-simplex~$\Delta(R_i)$ with the regular $m_i$-simplex of~$H_i$ with vertices $\frac{1}{\sqrt{2}} e^i_0,\dots,\frac{1}{\sqrt{2}} e^i_{m_i}$.

Consider the orthogonal sum
\begin{equation} \label{eq:E}
E = H \oplus H_0 \oplus \dots \oplus H_s.
\end{equation}
Denote by~$g_E$ the scalar product on~$E$.
There is a natural piecewise affine embedding $\chi:X \hookrightarrow E$ taking every vertex $v \in X$, identified with some element $\frac{1}{\sqrt{2}} e_j^i$ with $0 \leq i \leq s$ and $0 \leq j \leq m_i$, to
\[
\chi(v) = \tfrac{1}{\sqrt{2}} e_i + \tfrac{1}{\sqrt{2}} e_j^i.
\]
(Here, a piecewise affine embedding means an embedding whose restriction to each simplex is an affine map.)
Note that the distance between the images of any pair of vertices of~$X$ is bounded by~$\sqrt{2}$.
By construction, the whole space~$R_i$ is sent under $\chi:X \hookrightarrow E$ into the subspace~$H'_i = \frac{1}{\sqrt{2}} e_i + H_i$ orthogonal to~$H$, parallel to~$H_i$ and passing through~$\frac{1}{\sqrt{2}} e_i$.
By our choices of identification, the composition of~$\chi: X \hookrightarrow E$ with the orthogonal projection $p_H:E \to H$ onto~$H$ coincides with the simplicial map $\pi:X \to P$, that is,
\[
\pi = p_H \circ \chi.
\]

The piecewise flat metric on~$X$ induced by the piecewise affine embedding $\chi: X \hookrightarrow E$ can be deformed as follows.
Let $h_t:E \to E$ be the endomorphism of~$E$ preserving each factor of the decomposition~\eqref{eq:E} whose restriction to~$H$ is the identity map and restriction to each~$H_i$ is the homothety with coefficient~$t$.
For every $t \in (0,1]$, the map $\chi_t: X \hookrightarrow E$ defined as
\[
\chi_t = h_t \circ \chi
\]
is a piecewise affine embedding.
Note that $h_t$ preserves the subspaces~$H'_i$.
By construction, we still have
\[
\pi = p_H \circ \chi_t.
\]
Endow~$X$ with the piecewise flat metric~$g_t$ induced by the piecewise affine embedding~$\chi_t: X \hookrightarrow E$ defined as
\begin{equation} \label{eq:gt}
g_t = \chi_t^* (g_E).
\end{equation}
Endow also~$P$ with the natural piecewise flat metric~$g_P$ where all its simplices are isometric to the standard Euclidean simplex induced by the piecewise affine embedding~$P \subseteq H \subseteq E$.
%By construction, the map $\pi:X \to P$ is $1$-Lipschitz.
The projection $p_H:E \to H$ is $1$-Lipschitz both for the metrics~$g_E$ and~$h_t^*(g_E)$ on~$E$, where $H$ is endowed with the restriction of~$g_t$ to~$H$.
It follows that $\pi=p_H \circ h_t \circ \chi:X \to P$ is $1$-Lipschitz.
Observe also that the $g_t$-length of every edge lying in some fiber~$\pi^{-1}(p_i) \subseteq X$ over a vertex $p_i \in P$ is equal to~$t$.
Since $P$ is a $k$-dimensional simplicial complex, we conclude that 
\begin{equation} \label{eq:volt}
\vol(X,g_t) = O(t^{m-k})
\end{equation}
as $t$ goes to zero.
Note also that for every simplex~$\Delta$ of~$X$, we have
\begin{equation} \label{eq:diamgt}
\diam(\Delta,g_t) \leq \sqrt{2}.
\end{equation}

\subsection{Construction of Lipschitz retractions around each fiber}

\mbox { }

\medskip

Using the same notations as in the previous section, we construct a Lipschitz retraction from a neighborhood of each fiber of~$\pi:X \to P$ above a vertex of~$P$ onto the fiber itself.
This is an important technical result which will be used in Section~\ref{sec:deforming} to deform paths of~$X$ into the $1$-skeleton of~$X$ without increasing their $g_t$-length too much (uniformly in~$t$).

\medskip

More precisely, we have

\begin{lemma} \label{lem:Xv}
There exist some constants $\tau_m \geq \frac{1}{2}$ and $\varepsilon_m, \sigma_m \in (0,1)$ with $\varepsilon_m \leq \tau_m$ depending only on~$m$ such that for every~$v \in P$, there exists a closed neighborhood~$X_v \subseteq X$ of~$\pi^{-1}(v)$ such that the following properties hold for every $t \in (0,1]$.
\begin{enumerate}
\item The subset~$X_v \subseteq X$ lies in the (open) star of~$\pi^{-1}(v)$ and contains all the points of~$X$ at $g_t$-distance at most~$\tau_m$ from~$\pi^{-1}(v)$. %, for some constant $\tau_m \geq \frac{1}{2}$ depending only on~$m$.
\item For every point~$z \in \partial X_v$, denote by~$\Delta_X$ the smallest simplex of~$X$ containing~$z$.
Pick a vertex~$z_- \in \Delta_X$ lying in~$\pi^{-1}(v)$ and a vertex~$z_+ \in \Delta_X$ not lying in~$\pi^{-1}(v)$ at minimal $g_t$-distance from~$z$.
Then, %there exist $\varepsilon_m, \sigma_m \in (0,1)$, depending only on~$m$, with $\varepsilon_m \leq \tau_m$ such that
\begin{equation} \label{eq:z}
d_{g_t}(z,z_+) \leq d_{g_t}(z,z_-) - \varepsilon_m
\end{equation}
and
\begin{equation} \label{eq:zz}
d_{g_t}(z,z_+) + \sigma_m \leq \tau_m.
\end{equation}
\end{enumerate}

Furthermore, there exists $\kappa_m$-Lipschitz retraction
\[
\varrho_t:X_v \to \pi^{-1}(v)
\]
where $\kappa_m$ is a constant depending only on~$m$.
\end{lemma}

\begin{proof}
Say $v=p_0$.
Let $\Delta^q = \Delta_P^q$ be a $q$-simplex of~$P$ containing~$v$.
Recall that $\Delta^q$ lies in~$H$; see Section~\ref{sec:g_t}.
%Fix a vertex~$v$ of~$\Delta^q$, say $v=p_0$.
Denote by~$\Delta_v^{q-1}$ the $(q-1)$-face of~$\Delta^q$ opposite to~$v$.
Consider a $p$-simplex~$\Delta_X^p$ of~$X$ mapped onto~$\Delta_P^q$ under $\pi:X \to P$.
The intersection~$\pi^{-1}(v) \cap \Delta_X^p$ is a simplex of~$X$, whose dimension is denoted by~$r$.
By construction, the map $\pi:X \to P$ sends the $r$-simplex $\delta_0^r := \pi^{-1}(v) \cap \Delta_X^p$ of~$\Delta_X^p$ to~$v$.
%Denote by $\delta_0^r = \pi^{-1}(v) \cap \Delta_X^p$ the $r$-simplex of~$\Delta_X^p$ sent to~$v$ by $\pi:X \to P$.
Construct a retraction
\[
\bar{\varrho}_t:\Delta_X^p \setminus \pi^{-1}(\Delta_v^{q-1}) \to \delta_0^r
\]
onto~$\delta_0^r$ as follows.
First, embed $\Delta_X^p$ into the Euclidean space~$E$ through $\chi_t:X \hookrightarrow E$.
Under this identification, the image~$h_t(\delta_0^r)$ of~$\delta_0^r$ lies in the subspace~$H^v_0$ orthogonal to~$H$, parallel to~$H_0$ and passing through~$v$.
Then, take the orthogonal projection to~$H \oplus H_0$. 
Note that the image of~$\Delta_X^p$ under the composition of these maps agrees with the convex hull $\Conv(h_t(\delta_0^r) \cup \Delta_v^{q-1})$.
Thus, every point~$x \in \Delta_X^p \setminus \pi^{-1}(\Delta_v^{q-1})$ is sent to a point~$\bar{x} \in \Conv(h_t(\delta_0^r) \cup \Delta_v^{q-1})$.
Then, for every $\bar{x} \in \Conv(h_t(\delta_0^r) \cup \Delta_v^{q-1}) \setminus \Delta_v^{q-1}$ not lying in~$h_t(\delta_0^r)$, take the orthogonal projection $\bar{x}' \in \Delta^q$ of~$\bar{x}$ to~$\Delta^q$, send~$\bar{x}'$ to the point~$\bar{x}'' \in \Delta_v^{q-1}$ where the ray arising from~$v$ and passing through~$\bar{x}'$ meets~$\Delta_v^{q-1}$, and map~$\bar{x}$ to the point~$y' \in h_t(\delta_0^r)$ where the ray arising from~$\bar{x}''$ and passing through~$\bar{x}$ intersects~$h_t(\delta_0^r)$.
The map taking~$\bar{x}$ to~$y'$ extends by continuity into the identity map on~$h_t(\delta_0^r)$.
Finally, take the image~$y \in \delta_0^r$ of~$y'$ under the inverse map $\chi_t^{-1}: h_t(\delta_0^r) \to \delta_0^r$.
The resulting map 
$
\bar{\varrho}_t: \Delta_X^p \setminus \pi^{-1}(\Delta_v^{q-1}) \to \delta_0^r
$
sending~$x$ to~$y$ is a retraction onto~$\delta_0^r$.

\medskip

\forget
%{\bf Collapsing simplices.} 
Consider the standard Euclidean $q$-simplex~$\Delta^q \subseteq \R^m$.
Denote by $v_0,\dots,v_q$ the vertices of~$\Delta^q$.
Fix some nonnegative integers $k_0,\dots,k_q$ with $q + \sum_{i=0}^q k_i = m$.
We refer to the~$k_i$ as \emph{dimensional weights}.
Fix $q+1$ pairwise orthogonal $k_i$-dimensional affine subspaces~$H_i \subseteq \R^m$ passing through~$v_i$ and orthogonal to the $q$-dimensional affine subspace~$H$ spanned by~$\Delta^q$.
We have the following orthogonal direct sum 
\[
\vec{H} \oplus \vec{H}_0 \oplus \cdots \oplus \vec{H}_q = \R^m.
\]
In each subspace~$H_i$, take a regular Euclidean $k_i$-simplex~$\delta_i^{k_i}$ with vertices $w_0^i,\dots,w_{k_i}^i$ centered at~$v_i$.
The $m+1$ vertices $w_0^i,\dots,w_{k_i}^i$ with $0 \leq i \leq q$ are affinely independent.
Their convex hull forms an $m$-simplex~$\Delta^m_1$ of~$\R^m$, that is, 
\[
\Delta^m_1 = \Conv(\Delta^q \cup (\cup_{i=0}^q \delta_i^{k_i})) \subseteq \R^m.
\]
More generally, we can consider the regular Euclidean $k_i$-simplices $t \delta_i^{k_i}$ of size~$t$ obtained as the image of~$\delta_i^{k_i}$ by the $t$-homothety centered at~$v_i$ with $t \in (0,1]$ and define 
\begin{equation} \label{eq:Dt}
\Delta^m_t = \Conv(\Delta^q \cup (\cup_{i=0}^q \, t \delta_i^{k_i})) \subseteq \Delta^m_1.
\end{equation}
As $t$ goes to zero, the simplex~$\Delta^m_t$ collapses onto~$\Delta^q$; see Figure~\ref{fig:collapsing} below.

\begin{figure}[htbp!]
\vspace{1.3cm}
\hspace{-4cm}
\def\svgwidth{4cm}
\input{collapsing.pdf_tex}
\caption{Collapsing~$\Delta_t^m$.} \label{fig:collapsing} 
\end{figure}

%{\bf Retraction onto a small side.}
Fix a vertex~$v$ of the standard Euclidean $q$-simplex~$\Delta^q \subseteq \R^m$.
Say, $v=v_0$.
Let $\Delta_v^{q-1}$ be the $(q-1)$-face of~$\Delta^q$ opposite to~$v$.
Construct a retraction 
\[
\bar{\varrho}:\Delta_1^m \setminus \Delta_v^{q-1} \to \delta_0^k
\]
onto~$\delta_0^k$ with $k=k_0$ as follows.
First, take the orthogonal projection onto~$H \oplus H_0$ (along $\vec{H}_1 \oplus \dots \oplus \vec{H}_q$).
Note that the projection of~$\Delta_1^m$ agrees with $\Conv(\Delta^q \cup \delta_0^k)$.
Thus, every point $x \in \Delta_1^m \setminus \Delta_v^{q-1}$ is sent to a point~$\bar{x} \in \Conv(\Delta^q \cup \delta_v^{q-1})$.
Then, for every~$\bar{x} \in \Conv(\Delta^q \cup \delta_0^k) \setminus \Delta_v^{q-1}$ not lying in~$\delta_0^k$, take the orthogonal projection~$\bar{x}' \in \Delta^q$ of~$\bar{x}$ to~$\Delta^q$, send~$\bar{x}'$ to the point~$\bar{x}'' \in \Delta_v^{q-1}$ where the ray arising from~$v$ and passing through~$\bar{x}'$ meets~$\Delta_v^{q-1}$, and map~$\bar{x}$ to the point~$y \in \delta_0^k$ where the ray arising from~$\bar{x}''$ and passing through~$\bar{x}$ intersects~$\delta_0^k$.
The map taking~$\bar{x}$ to~$y$ extends by continuity into the identity map on~$\delta_0^k$.
The resulting map 
\[
\bar{\varrho}: \Delta_1^m \setminus \Delta_v^{q-1} \to \delta_0^k
\]
sending~$x$ to~$y$ is a retraction onto~$\delta_0^k$.
Furthermore, its restriction
\[
\bar{\varrho}_t: \Delta_t^m \setminus \Delta_v^{q-1} \to t \delta_0^k
\]
is surjective onto~$t \delta_0^k$.
\forgotten

%{\bf Lipschitz property.}
The map $\bar{\varrho}_t: \Delta_X^p \setminus \pi^{-1}(\Delta_v^{q-1}) \to \delta_0^r$ is not Lipschitz as the Lipschitz constant at a point goes to infinity when the point moves to~$ \Delta_X^p \cap \pi^{-1}(\Delta_v^{q-1})$.
For the map to be Lipschitz, we need to restrict it to a domain away from~$\pi^{-1}(\Delta_v^{q-1}) \cap \Delta_X^p$.
In order to use the map as a building block to construct further maps on simplicial complexes, we also need to take domains that are coherent in terms of face inclusion.
Extend~$\Delta^q$ into a regular Euclidean $m$-simplex~$\Delta^m \subseteq H$, where $\Delta^q$ is a face of~$\Delta^m$.
The perpendicular bisector hyperplane of the segment joining the barycenters of~$\Delta^m$ and~$\Delta_v^m$ intersects~$\Delta^q$ along a subspace~$\mathcal{H}$.
Let $\tau_{q,m} = d(v,\mathcal{H})$ be the distance from~$v$ to~$\mathcal{H}$ in~$\Delta^q$.
Observe that the sequence~$\tau_{q,m}$ is decreasing in~$q$. 
In particular, 
\[
\tau_{q,m} \geq \tau_m:=\tau_{m,m}.
\]
Note also that $\tau_m \geq \frac{1}{2}$.
See Figure~\ref{fig:construction} below.

%\begin{figure}[htbp!]
%\includegraphics[height=5cm]{fig1.jpg}
%\caption{Construction of~$\mathcal{H}$} \label{fig:construction} 
%\end{figure}

\begin{figure}[htbp!]
\vspace{0.5cm}
\hspace{0cm}
\def\svgwidth{5cm}
%% Creator: Inkscape 1.0 (4035a4f, 2020-05-01), www.inkscape.org
%% PDF/EPS/PS + LaTeX output extension by Johan Engelen, 2010
%% Accompanies image file '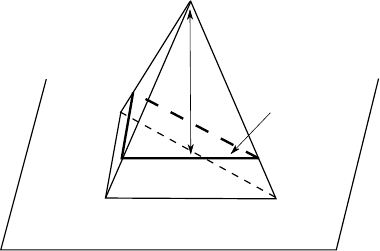' (pdf, eps, ps)
%%
%% To include the image in your LaTeX document, write
%%   \input{<filename>.pdf_tex}
%%  instead of
%%   \includegraphics{<filename>.pdf}
%% To scale the image, write
%%   \def\svgwidth{<desired width>}
%%   \input{<filename>.pdf_tex}
%%  instead of
%%   \includegraphics[width=<desired width>]{<filename>.pdf}
%%
%% Images with a different path to the parent latex file can
%% be accessed with the `import' package (which may need to be
%% installed) using
%%   \usepackage{import}
%% in the preamble, and then including the image with
%%   \import{<path to file>}{<filename>.pdf_tex}
%% Alternatively, one can specify
%%   \graphicspath{{<path to file>/}}
%% 
%% For more information, please see info/svg-inkscape on CTAN:
%%   http://tug.ctan.org/tex-archive/info/svg-inkscape
%%
\begingroup%
  \makeatletter%
  \providecommand\color[2][]{%
    \errmessage{(Inkscape) Color is used for the text in Inkscape, but the package 'color.sty' is not loaded}%
    \renewcommand\color[2][]{}%
  }%
  \providecommand\transparent[1]{%
    \errmessage{(Inkscape) Transparency is used (non-zero) for the text in Inkscape, but the package 'transparent.sty' is not loaded}%
    \renewcommand\transparent[1]{}%
  }%
  \providecommand\rotatebox[2]{#2}%
  \newcommand*\fsize{\dimexpr\f@size pt\relax}%
  \newcommand*\lineheight[1]{\fontsize{\fsize}{#1\fsize}\selectfont}%
  \ifx\svgwidth\undefined%
    \setlength{\unitlength}{181.99146661bp}%
    \ifx\svgscale\undefined%
      \relax%
    \else%
      \setlength{\unitlength}{\unitlength * \real{\svgscale}}%
    \fi%
  \else%
    \setlength{\unitlength}{\svgwidth}%
  \fi%
  \global\let\svgwidth\undefined%
  \global\let\svgscale\undefined%
  \makeatother%
  \begin{picture}(1,0.66056304)%
    \lineheight{1}%
    \setlength\tabcolsep{0pt}%
    \put(0.71883093,0.40916082){\color[rgb]{0,0,0}\makebox(0,0)[lt]{\lineheight{1.25}\smash{\begin{tabular}[t]{l}$\tau_{q,m}$\end{tabular}}}}%
    \put(0.55840776,0.04282638){\color[rgb]{0,0,0}\makebox(0,0)[lt]{\lineheight{1.25}\smash{\begin{tabular}[t]{l}$\Delta^q$\end{tabular}}}}%
    \put(0.0612345,0.04618187){\color[rgb]{0,0,0}\makebox(0,0)[lt]{\lineheight{1.25}\smash{\begin{tabular}[t]{l}$\mathcal{H}$\end{tabular}}}}%
    \put(0.25170423,0.53881148){\color[rgb]{0,0,0}\makebox(0,0)[lt]{\lineheight{1.25}\smash{\begin{tabular}[t]{l}$\Delta^{m}$\end{tabular}}}}%
    \put(0,0){\includegraphics[width=\unitlength,page=1]{construction.pdf}}%
    \put(1.08611307,0.37218462){\color[rgb]{0,0,0}\makebox(0,0)[lt]{\lineheight{1.25}\smash{\begin{tabular}[t]{l}$\mathcal{H}'$\end{tabular}}}}%
    \put(0.48556098,0.71402671){\color[rgb]{0,0,0}\makebox(0,0)[lt]{\lineheight{1.25}\smash{\begin{tabular}[t]{l}$v$\end{tabular}}}}%
  \end{picture}%
\endgroup%

\caption{Construction of~$\mathcal{H}$.} \label{fig:construction} 
\end{figure}

Consider the domain~$\Delta^q(v)$ of~$\Delta^q$ containing~$v$ delimited by~$\mathcal{H}$.
The restriction
\[
\varrho_t: \pi^{-1}(\Delta^q(v)) \cap \Delta_X^p \to \delta_0^r
\]
of~$\bar{\varrho}_t$ is $\kappa_m$-Lipschitz for some constant~$\kappa_m \geq 1$ depending only on~$m$. % (which can be expressed in terms of~$\tau_{q,m}$).
Note that this construction is coherent. 
That is, if $\Delta_P$ and~$\Delta'_P$ are two simplices of~$P$ containing~$v$, and $\Delta_X$ and~$\Delta'_X$ are two simplices of~$X$ mapped onto~$\Delta_P$ and~$\Delta'_P$ under $\pi:X \to P$, then the retractions~$\varrho_t$ defined on $\pi^{-1}(\Delta_P(v)) \cap \Delta_X$ and $\pi^{-1}(\Delta'_P(v)) \cap \Delta'_X$ coincide with the intersection of their domains of definition.
This will allow us to put together the retractions~$\varrho_t$.

Given a point~$z$ of~$\Delta_X^p$ lying in~$\pi^{-1}(\mathcal{H})$, let $z_-$ be a vertex of~$\Delta_X^p$ lying in~$\delta_0^r$ and $z_+$ be a vertex of~$\Delta_X^p$ not lying in~$\delta_0^r$ at minimal $g_t$-distance from~$z$.
Recall that $\Delta_X^p$ collapses onto~$\Delta_P^q$ in~$E$ as $t$ goes to zero.
By our choice of~$\mathcal{H}$, there exist $\varepsilon_m, \sigma_m \in (0,1)$ depending only on~$m$ such that %for $t$ small enough
\[
d_{g_t}(z,z_+) \leq d_{g_t}(z,z_-) - \varepsilon_m
\]
and
\[
d_{g_t}(z,z_+) + \sigma_m \leq \tau_m.
\]
We can further assume that $\varepsilon_m \leq \tau_m$.

Now, define
\begin{equation} \label{eq:Pv}
P_v = \cup \Delta_P^q(v) \subseteq P
\end{equation}
as the union over all the closed domains~$\Delta_P^q(v) \subseteq \Delta_P^q$, where $\Delta_P^q$ is a simplex of~$P$ of any dimension~$q$ containing~$v$.
% (Recall that $\Delta^q(v)$ is the domain of~$\Delta^q$ containing~$v$ delimited by~$\mathcal{H}$.)
Denote also 
\begin{equation} \label{eq:Xv}
X_v = \pi^{-1}(P_v) \subseteq X.
\end{equation}
By construction, the subset $X_v \subseteq X$ is a closed neighborhood of~$\pi^{-1}(v)$, lying in the (open) star of~$\pi^{-1}(v)$ and containing all the points of~$X$ at $g_t$-distance at most~$\tau_m$ from~$\pi^{-1}(v)$.

Putting together the retractions $\varrho_t : \pi^{-1}(\Delta^q(v)) \cap \Delta_X^p \to \delta_0^r$ where $\Delta_X^p$ is a simplex of~$X_v$ projecting to a simplex~$\Delta_P^q$ of~$P$ containing~$v$ and $\delta_0^r= \pi^{-1}(v) \cap \Delta_X^p$, we obtain a $\kappa_m$-Lipschitz retraction of~$X_v$ onto~$\pi^{-1}(v)$, still denoted by
\[
\varrho_t:X_v \to \pi^{-1}(v).
\]
\end{proof}

\forget
\medskip

Consider the domain~$\HH$ defined as the half-space of~$\R^m$ containing~$v$ delimited by the hyperplane containing~$\mathcal{H}$ orthogonal to~$H$.
Note that the barycenter of~$\Delta_1^m$ (and so of~$\Delta_t^m$) lies in the interior of~$\HH$.
The restriction
\[
\varrho: \Delta_1^m \cap \HH \to \delta_0^k
\]
of~$\bar{\varrho}$ is $\kappa_m$-Lipschitz for some constant~$\kappa_m \geq 1$ depending only on~$m$ (which can be expressed in terms of~$\tau_{q,m}$).
And so is the restriction of~$\bar{\varrho}_t$ denoted by
\[
\varrho_t: \Delta_t^m \cap \HH \to t \delta_0^k.
\]
We will also need to consider the region~$\Delta^q(v)$ of~$\Delta^q$ containing~$v$ delimited by~$\mathcal{H}$.
That is, 
\begin{equation} \label{eq:Dv}
\Delta^q(v) = \Delta^q \cap \HH.
\end{equation}

%{\bf Collapsing metric.}
Consider a simplicial map $\pi:X \to P$ to a finite simplicial $(m-1)$-complex~$P$ as in the fiber collapsing assumption.
By Proposition~\ref{prop:connected}, we can assume that the simplicial map $\pi:X \to P$ is onto and that its fibers~$F_p$ are connected.

We want to construct a piecewise flat metric~$g_t$ on~$X$ for $t \in (0,1]$ as follows.
Every $p$-simplex~$\Delta_X^p$ of~$X$ maps onto a $q$-simplex~$\Delta_P^q$ of~$P$ under~$\pi$.
We identify~$\Delta_P^q$ with the standard Euclidean $q$-simplex $\Delta^q \subseteq \R^m$ with $v_0,\dots,v_q$.
Let $k_i +1$ be the number of vertices of~$\Delta_X^p$ sent to~$v_i$.
We identify~$\Delta_X^p$ with the Euclidean $p$-simplex~$\Delta_t^p$ defined in~\eqref{eq:Dt} where the dimensional weights are equal to~$k_i$.
Note that this construction is coherent. 
That is, if $\Delta_X'$ is a face of~$\Delta_X$, then the Euclidean metric defined on~$\Delta_X'$ agrees with the restriction of the Euclidean metric on~$\Delta_X$  to~$\Delta_X'$.
This allows us to define a piecewise flat metric~$g_t$ on the whole simplicial complex~$X$.
\forgotten

\subsection{Deforming arcs into edge-arcs} \label{sec:deforming}

\mbox { }

\medskip

Considering the family of piecewise flat metrics~$g_t$ on~$X$ defined in~\eqref{eq:gt}, we show the following result about the deformation of arcs of~$X$ into its $1$-skeleton.
This result will allow us to apply combinatorial techniques to count homotopy classes in Section~\ref{sec:edge-loop}.

\begin{proposition} \label{prop:deforming}
Let $X$ be a connected finite simplicial $m$-complex endowed with the piecewise flat metric~$g_t$ defined in~\eqref{eq:gt}.
Then, every arc~$\gamma$ of~$X$ joining two vertices can be deformed into an arc~$\gamma_e$ lying in the $1$-skeleton of~$X$ (\ie, $\gamma_e$ is an edge-arc), while keeping its endpoints fixed, with
\begin{equation} \label{eq:deforming}
\length_{g_t}(\gamma_e) \leq C_m \, \length_{g_t}(\gamma)
\end{equation}
for every $t \in (0,1]$, where $C_m$ is a positive constant depending only on~$m$.
\end{proposition}

\begin{proof}
Let us start with a simple observation.
Every arc of a regular Euclidean simplex~$\Delta^m$ with endpoints on~$\partial \Delta^m$ can be deformed into an arc of~$\partial \Delta^m$ with the same endpoints at the cost of multiplying its length by a factor bounded by a constant~$\lambda_m$ depending only on~$m$.
Applying this observation successively on the simplices of the skeleta of~$X$, we deduce by induction that the inequality~\eqref{eq:deforming} holds with $C_m = \lambda'_m$ for~$t=1$, where $\lambda'_m = \prod_{i=2}^m \lambda_i$, and, more generally, when every simplex of~$X$ is isometric to a regular Euclidean simplex of the same size.

\medskip

Endow~$P$ with its natural piecewise flat metric where all simplices are isometric to the standard Euclidean simplex of the same dimension.
Denote by~$v$ the image of the starting point of~$\gamma$ by $\pi:X \to P$.
Note that $v$ is a vertex of~$P$.
Consider the domains~$P_v$ and~$X_v$ introduced in~\eqref{eq:Pv} and~\eqref{eq:Xv}.
For every $q$-simplex $\Delta^q \subseteq P_v$ containing~$v$, the distance between~$v$ and its opposite side in~$\Delta^q(v)$ is at least~$\tau_m$.
Since the map $\pi:X_v \to P_v$ is $1$-Lipschitz, we deduce that if $\gamma$ leaves~$X_v$ then its $g_t$-length is greater than~$\tau_m$.

Let us argue by induction on the integer $n \geq 0$ such that
\[
n \varepsilon_m \leq \length_{g_t}(\gamma) < (n+1) \varepsilon_m
\]
where $\varepsilon_m$ is given by Lemma~\ref{lem:Xv}.
The value of~$C_m$ in~\eqref{eq:deforming} can be taken to be equal to 
$
C_m = 12 \tfrac{\lambda'_m \kappa_m}{\sigma_m}
$,
where $\kappa_m$ and~$\sigma_m$ are given by Lemma~\ref{lem:Xv}, and $\lambda'_m$ is defined above.
%$\kappa_m$ is the Lipschitz constant of the map~$\varrho_t$ defined in~\eqref{eq:rhot}, $\sigma_m$ is defined in~\eqref{eq:zz}, and $\lambda'_m$ is defined above.

Suppose that $\gamma$ lies in~$X_v$.
(This is the case for instance if $\length_{g_t}(\gamma) < \tau_m$ and in particular if $n=0$.)
The image~$\gamma'$ of~$\gamma$ under the $\kappa_m$-Lipschitz retraction $\varrho_t:X_v \to \pi^{-1}(v)$ satisfies 
\[
\length_{g_t}(\gamma') \leq \kappa_m \, \length_{g_t}(\gamma).
\]
By construction, the fiber~$\pi^{-1}(v)$ is a simplicial complex of dimension at most~$m$ composed of regular Euclidean simplices of size~$t$.
As observed at the beginning of the proof, the arc~$\gamma'$ lying in~$\pi^{-1}(v)$ can be deformed into an arc~$\gamma_e$ lying in the $1$-skeleton of~$\pi^{-1}(v)$, and so of~$X$, with the same endpoints multiplying its length by a factor bounded by at most~$\lambda'_m$.
This concludes the proof of the proposition in this case with $C_m=\kappa_m \lambda'_m$.

Suppose that $\gamma$ leaves~$X_v$.
Denote by~$z$ the first point where $\gamma$ leaves~$X_v$.
The point~$z$ splits~$\gamma$ into two subarcs, $\gamma'$ and~$\gamma''$, with~$\gamma' \subseteq X_v$.
Let $\Delta_X$ be the smallest simplex of~$X$ containing $v$ and~$z$.
%(This simplex is not necessarily unique.)
Pick a vertex~$z_-$ of~$\Delta_X$ lying in~$\pi^{-1}(v)$ and a vertex~$z_+$ of~$\Delta_X$ not lying in~$\pi^{-1}(v)$ at minimal $g_t$-distance from~$z$.
By Lemma~\ref{lem:Xv}.\eqref{eq:z}, we have
\begin{equation} \label{eq:zzz}
d_{g_t}(z,z_+) \leq d_{g_t}(z,z_-) - \varepsilon_m \leq \length_{g_t}(\gamma') - \varepsilon_m.
\end{equation}
Since $z$ and~$z_{\pm}$ lie in the same simplex~$\Delta_X$, the arc~$\gamma$ is homotopic to~$\alpha'  \cup [z_-,z_+] \cup \alpha''$, where the two arcs
\[
\alpha' = \gamma' \cup [z,z_-] \quad \mbox { and } \quad \alpha'' = [z_+,z] \cup \gamma''
\]
%\begin{align*}
%\alpha' & = \gamma' \cup [z,z_-] \\
%\alpha'' & = [z_+,z] \cup \gamma''
%\end{align*}
start and end at vertices of~$X$.
As previously observed, we have $\length_{g_t}(\gamma') \geq \tau_m$.
Recall also that $\diam_{g_t}(\Delta_X) \leq \sqrt{2}$; see~\eqref{eq:diamgt}.
Thus,
\[
\length_{g_t}(\alpha') \leq \length_{g_t}(\gamma') + \sqrt{2} \leq \left(1+\tfrac{\sqrt{2}}{\tau_m} \right) \, \length_{g_t}(\gamma')
\]
for $t \in (0,1]$.
%The subarc~$\gamma' \cup [z,z_-]$ of~$\alpha'$ lies in~$X_v$ and is sent to an arc of~$\pi^{-1}(v)$ with the same endpoints under the $1$-Lipschitz retraction~$\varrho_t:X_v \to \pi^{-1}(v)$.
The arc~$\alpha'$ lies in~$X_v$ and is sent to an arc of~$\pi^{-1}(v)$ with the same endpoints under the $\kappa_m$-Lipschitz retraction~$\varrho_t:X_v \to \pi^{-1}(v)$.
In turn, this arc can be deformed into an arc~$\alpha_e'$ lying in the $1$-skeleton of~$X$ with the same endpoints 
%multiplying its length by a factor bounded by a constant depending only on~$m$ as we observed at the beginning of the proof.
%Adding the segment~$[z_-,z_+]$ of length~$1 \leq \frac{1}{\tau_m} \length_{g_t}(\gamma')$ to this edge-arc, this yields a deformation of~$\alpha'$ into an edge-arc~$\alpha_e'$ with the same endpoints
with
\begin{align}
\length_{g_t}(\alpha_e') & \leq \lambda'_m \kappa_m \, \length_{g_t}(\alpha') \nonumber \\
 & \leq \lambda'_m \kappa_m \left( 1 + \tfrac{\sqrt{2}}{\tau_m} \right) \, \length_{g_t}(\gamma'). \label{eq:a'}
% & \leq C_m \, \length_{g_t}(\gamma') -C_m -1.
\end{align}
Now, by~\eqref{eq:zzz}, we have
\begin{align*}
\length_{g_t}(\alpha'') & \leq \length_{g_t}(\gamma'') + d_{g_t}(z,z_+) \\
 & \leq \length_{g_t}(\gamma) - \varepsilon_m.
\end{align*}
By induction, the arc~$\alpha''$ can be deformed into an edge-arc~$\alpha''_e$ with the same endpoints with
\begin{align}
\length_{g_t}(\alpha''_e) & \leq C_m \, \length_{g_t}(\alpha'') \nonumber \\
 & \leq C_m \, \length_{g_t}(\gamma'') + C_m \, d_{g_t}(z,z_+). \label{eq:a''}
\end{align}
As a result of~\eqref{eq:a'} and~\eqref{eq:a''}, the arc~$\gamma$ can be deformed into the edge-arc $\gamma_e = \alpha_e' \cup [z_-,z_+] \cup \alpha''_e$, where
%\begin{align*}
\[
\length_{g_t}(\gamma_e) \leq \lambda'_m \kappa_m \left( 1 + \tfrac{\sqrt{2}}{\tau_m} \right) \, \length_{g_t}(\gamma') + \sqrt{2} + C_m \, \length_{g_t}(\gamma'') + C_m \, d_{g_t}(z,z_+).
\]
% & \leq C_m \, \length_{g_t}(\gamma).
%\end{align*}
In order to have $\length_{g_t}(\gamma_e) \leq C_m \, \length_{g_t}(\gamma)$, it is enough to have
\[
A_m \, \length_{g_t}(\gamma') + \sqrt{2} + C_m \, d_{g_t}(z,z_+) \leq C_m \, \length_{g_t}(\gamma')
\]
where $A_m = \lambda'_m \kappa_m \left( 1 + \tfrac{\sqrt{2}}{\tau_m} \right) \leq 4 \lambda'_m \kappa_m$ (recall that $\tau_m \geq \frac{1}{2}$).
That is,
\[
\frac{C_m \, d(z,z_+) + \sqrt{2}}{C_m - A_m} \leq  \length_{g_t}(\gamma').
\]
Recall that $d_{g_t}(z,z_+) + \sigma_m \leq \tau_m$; see~Lemma~\ref{lem:Xv}.\eqref{eq:zz}.
Thus, for $C_m$ large enough (\eg, $C_m \geq 12 \frac{\lambda'_m \kappa_m}{\sigma_m} \geq \frac{(1+ \sqrt{2} +\sigma_m) A_m}{\sigma_m}$), we have
\[
\frac{C_m \, d_{g_t}(z,z_+) + \sqrt{2}}{C_m - A_m} \leq d_{g_t}(z,z_+) + \sigma_m \leq \tau_m \leq  \length_{g_t}(\gamma')
\]
as desired.
\end{proof}

%%%%%%%%%%%%%%%%%%%%%%%%%%%%%%%%%%%%%%%%%%%%%%%%%%%%%%%%%%%%%%%%%%%%%%%%%%%%%%%%%%%%%%%%%%%%%%%%%%%%%%%%%%%%%%%%%%%%%%%%%%%%
\forget
\begin{proof}[Proof of Theorem~\ref{theo:A}]
Consider a simplicial map $\pi:X \to P$ to a finite simplicial $(m-1)$-complex~$P$ as in the fiber collapsing assumption.
By Proposition~\ref{prop:connected}, we can assume that the simplicial map $\pi:X \to P$ is onto and that its fibers~$F_p$ are connected.

Let us introduce a couple of definitions.
An edge of~$X$ is said to be \emph{long} if it is sent to an edge of~$P$ by the simplicial map~$\pi:X \to P$.
It is said to be \emph{short} otherwise (in which case, it is sent to a vertex of~$P$).
Denote also by~$n_e$ the number of edges of~$X$.

We want to construct a piecewise flat metric~$g_t$ on~$X$ for $t \in (0,1]$ as follows.
Every $p$-simplex~$\Delta_X^p$ of~$X$ maps to a $q$-simplex~$\Delta_P^q$ of~$P$ under~$\pi$.
Denote by $v_0,\cdots,v_q$ the vertices of the standard Euclidean $q$-simplex~$\Delta^q \subseteq \R^N$ with $N \gg m$.
Let $k_i+1$ be the number of vertices of~$\Delta_X^p$ sent to~$v_i$.
Fix $q+1$ pairwise orthogonal affine subspaces $H_i \subseteq \R^N$ of dimension~$k_i$ passing through~$v_i$ and orthogonal to the affine subspace~$H$ spanned by~$\Delta^q$.
In each subspace~$H_i$, take a regular Euclidean $k_i$-simplex of size~$t$ with vertices $x_0^i,\dots,x_{k_i}^i$ centered at~$v_i$.
Define the metric~$g_t$ on~$\Delta_X^p$ by identifying~$\Delta_X^p$ with the Euclidean $p$-simplex with vertices $\{x_0^i,\dots,x_{k_i}^i \mid 0 \leq i \leq q \}$.
Note that this construction is coherent. 
That is, if $\Delta_X'$ is a face of~$\Delta_X$, then the metric defined on~$\Delta_X'$ agrees with the restriction to~$\Delta_X'$ of the metric on~$\Delta_X$.
This allows us to define the metric~$g_t$ on~$X$.

By construction, every short edge of~$X$ is of length~$t$ and every long edge of~$X$ is of length~$\simeq 1$ when $t$ goes to zero.
Furthermore,
\begin{equation} \label{eq:volt}
\vol(X,g_t) = O(t)
\end{equation}
as $t$ goes to zero.

\begin{proposition}
Every arc~$\gamma$ of~$X$ joining two vertices can be deformed into an arc~$\gamma'$ lying in the $1$-skeleton of~$X$ while keeping the endpoints fixed with
\[
\length_{g_t}(\gamma') \leq C_m \, \length_{g_t}(\gamma')
\]
for every $t \in (0,1]$, where $C_m$ is a positive constant depending only on~$m$.
\end{proposition}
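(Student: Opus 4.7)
The argument is an induction on the integer $n \geq 0$ with $n \tau_m \leq \length_{g_t}(\gamma) < (n+1)\tau_m$, with the Lipschitz retraction $\varrho_t : X_v \to \pi^{-1}(v)$ from \eqref{eq:rhot} as the main geometric tool. The auxiliary ingredient is a standard observation about a regular Euclidean simplex $\Delta^m$: any arc with endpoints on $\partial \Delta^m$ can be deformed rel endpoints to an arc of $\partial \Delta^m$ at cost of multiplying its length by a constant $\lambda_m$ depending only on $m$. Iterating this through the skeleta in decreasing dimension yields a universal factor $\lambda'_m := \prod_{i=2}^m \lambda_i$ for pushing an arc to the $1$-skeleton of any complex whose simplices are all regular Euclidean of a common size. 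In particular this applies to each fiber $\pi^{-1}(v)$, which is made of regular Euclidean simplices of size $t$.

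\textbf{Base case.} When $\gamma \subseteq X_v$ --- which is automatic in the case $n=0$ since the starting vertex of $\gamma$ lies over some vertex $v$ of $P$, the map $\pi$ is $1$-Lipschitz, and the domain $\Delta^q(v)$ in every simplex $\Delta^q$ containing $v$ reaches distance at least $\tau_m$ from $v$ --- I compose $\gamma$ with $\varrho_t$ to obtain an arc in $\pi^{-1}(v)$ of length at most $\kappa_m \, \length_{g_t}(\gamma)$, then apply the base observation inside $\pi^{-1}(v)$ to deform it (rel endpoints) to an edge-arc $\gamma_e$ with $\length_{g_t}(\gamma_e) \leq \kappa_m \lambda'_m \, \length_{g_t}(\gamma)$.

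\textbf{Inductive step.} If $\gamma$ leaves $X_v$, let $z$ be the first exit point and split $\gamma = \gamma' \cup \gamma''$ with $\gamma' \subseteq X_v$; note $\length_{g_t}(\gamma') \geq \tau_m$. Let $\Delta_X^p$ be a simplex of $X$ containing $z$, and let $z_-, z_+$ be vertices of $\Delta_X^p$ closest to $z$ lying respectively in and outside $\pi^{-1}(v)$. Then $\gamma$ is homotopic rel endpoints to $\alpha' \cup [z_-, z_+] \cup \alpha''$, where $\alpha' := \gamma' \cup [z, z_-] \subseteq X_v$ and $\alpha'' := [z_+, z] \cup \gamma''$ starts at a vertex. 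I deform $\alpha'$ to an edge-arc $\alpha'_e$ by the base-case recipe and $\alpha''$ to an edge-arc $\alpha''_e$ by induction; the latter is legitimate because \eqref{eq:z} forces $\length_{g_t}(\alpha'') \leq \length_{g_t}(\gamma) - \varepsilon_m$, so $\alpha''$ falls in a strictly smaller bracket of multiples of $\tau_m$.

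\textbf{Main obstacle.} The delicate part is closing the inductive length bound. Assembling $\gamma_e := \alpha'_e \cup [z_-, z_+] \cup \alpha''_e$ and using $\diam_{g_t}(\Delta_X^p) \leq \sqrt{2}$ reduces the target inequality $\length_{g_t}(\gamma_e) \leq C_m \, \length_{g_t}(\gamma)$ to a bound of the shape
\[
A_m \, \length_{g_t}(\gamma') + \sqrt{2} + C_m \, d(z, z_+) \leq C_m \, \length_{g_t}(\gamma'),
\]
where $A_m$ is controlled by $\lambda'_m \kappa_m$. The forward gain $\varepsilon_m$ sitting inside $\alpha''$ is not, by itself, enough to close this: what rescues the argument is the companion estimate \eqref{eq:zz}, giving $d(z, z_+) + \sigma_m \leq \tau_m \leq \length_{g_t}(\gamma')$. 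Picking $C_m$ of order $\lambda'_m \kappa_m / \sigma_m$ then absorbs both the $A_m \, \length_{g_t}(\gamma')$ contribution and the additive $\sqrt{2}$. This careful balancing between the two geometric constants $\varepsilon_m$ (which fuels the induction) and $\sigma_m$ (which bounds the edge-jump cost relative to $\tau_m$) is the crux of the argument; both were built into the very choice of the hyperplane $\mathcal{H}$ defining $X_v$.
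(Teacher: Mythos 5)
Your proposal is correct and follows the paper's proof essentially step for step: the same auxiliary observation giving $\lambda'_m$, the same induction on $n$ with bracket $\tau_m$, the same base case via $\varrho_t$, the same splitting of $\gamma$ at the exit point $z$ into $\alpha' \cup [z_-,z_+] \cup \alpha''$, the same use of $\varepsilon_m$ (equation \eqref{eq:z}) to drive the induction on $\alpha''$, and the same use of $\sigma_m$ and $\tau_m$ (equation \eqref{eq:zz}) to close the length estimate with $C_m$ of order $\lambda'_m \kappa_m / \sigma_m$. There is no meaningful divergence from the paper's argument.
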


\begin{proof}
Fix a vertex~$v=v_0$ of the standard Euclidean $q$-simplex~$\Delta^q \subseteq \R^N$.
Consider the regular Euclidean $k$-simplex $\Delta_t^k \subseteq H_0$ of size~$t$ with vertices $x_0,\dots,x_k$ centered at~$v$ as above.

Construct a map $\Xi_v \to \Delta_t^k$ as follows.
First, take the orthogonal projection to~$H \oplus H_0$ (along $H_1 \oplus \dots \oplus H_q$).
Note that the projection of~$\Delta_X^p$ agrees with $\Delta_X \cap (H \oplus H_0) = {\rm Conv}(\Delta_t^k \cup \Delta^q)$.
Then for every $x \in {\rm Conv}(\Delta_t^k \cup \Delta^q)$ not lying in the $(q-1)$-face~$\Delta^{q-1}_v$ of~$\Delta^q$ opposite to~$v$, take the orthogonal projection~$x' \in \Delta_v^{q-1}$ of~$x$ to the subspace ${\rm Vect}(\Delta_v^{q-1})$ spanned by~$\Delta_v^{q-1}$ and consider the intersection point~$\bar{x} \in \Delta_t^k$ between~$\Delta_t^k$ and the line passing through~$x$ and~$x'$.
The resulting map ${\rm Conv}(\Delta_t^k \cup \Delta^q) \setminus \Delta^{q-1}_v \to \Delta_t^k$ sending~$x$ to~$\bar{x}$ is a piecewise linear retraction onto~$\Delta_t^k$.

\end{proof}
\forgotten

%{\bf Short and long edges.}
%Let us introduce a couple of definitions.
%An edge of~$X$ is said to be \emph{long} if it is sent to an edge of~$P$ by the simplicial map~$\pi:X \to P$.
%It is said to be \emph{short} otherwise (in which case, it is sent to a vertex of~$P$).
%Denote also by~$n_e$ the number of edges of~$X$.

%By construction, every long edge of~$X$ is of length~$\sqrt{1+t^2}$ and every short edge of~$X$ is of length~$t$.
%Furthermore,
%\begin{equation} \label{eq:volt}
%\vol(X,g_t) = O(t)
%\end{equation}
%as $t$ goes to zero.

%\medskip

\subsection{Edge-loop entropy} \label{sec:edge-loop}

\mbox { }

\medskip

In this section, we introduce the edge-loop entropy -- a discrete substitute for the volume entropy -- and show that the growth of the edge-loop entropy of~$(X,g_t)$ is controlled as $t$ goes to zero.

%%%%%%%%%%%%%%%%%%%%%%%%%%%%%%%%%%%%%%%%%%%%%%%%%%%%%%%%%%%%%%%%%%%%%%%%%%%%%%%%%%%%%%%%%%%%%%%%%%%%%%%%%%%%%%%%%%%%%%%%%%%%
\begin{definition} \label{def:entN}
Let $X$ be a connected finite simplicial complex with a piecewise Riemannian metric~$g$.
The volume entropy of~$(X,g)$, see~\eqref{eq:entropy.comp.space}, can also be defined as the exponential growth rate of the number of homotopy classes induced by loops of length at most~$T$.
Namely,
\begin{equation} \label{eq:entN}
\ent(X,g) = \lim_{T \to \infty} \frac{1}{T} \log \mathcal{N}(X,g; T)
\end{equation}
where $\mathcal{N}(X,g;T) = {\rm card} \{ [\gamma] \in \pi_1(X,\star) \mid \gamma \mbox{ loop of } g\mbox{-length at most } T \}$.
See~\cite[Lemma~2.3]{sab} for instance, for a proof of this classical result.

It will be convenient to consider a similar notion for edge-loops.
Define the edge-loop entropy of~$(X,g)$ as
\[
\ent_e(X,g) = \lim_{T \to \infty} \frac{1}{T} \log \mathcal{N}_e(X,g;T)
\]
where $\mathcal{N}_e(X,g;T) = {\rm card} \{ [\gamma] \in \pi_1(X,\star) \mid \gamma \mbox{ edge-loop of } g\mbox{-length at most } T \}$.
Clearly, one has $\ent_e(X,g) \leq \ent(X,g)$.

Let $A$ be a subcomplex of~$X$ with basepoint~$a$.
We also define
\[
\mathcal{N}(A \subseteq (X,g); T) = {\rm card} \! \left\{ [\gamma] \in \pi_1(X,a) \mid \gamma \subseteq A \mbox{ and } \length_g(\gamma) \leq T \right\}
\]
as the number of homotopy classes (in~$X$) of loops of~$A$ based at~$a$ of $g$-length at most~$T$.
\end{definition}

\forget
Clearly, $\ent_e(X,g) \leq \ent(X,g)$.
For the simplicial $m$-complex~$X$ satisfying the fiber collapsing assumption and the piecewise flat metrics~$g_t$ considered in the previous sections, a reverse inequality holds true.
Namely, by Proposition~\ref{prop:deforming}, there exists $C_m >0$ such that 
\begin{equation} \label{eq:entropies}
\ent(X,g_t) \leq C_m \, \ent_e(X,g_t)
\end{equation}
for every $t \in (0,1]$.

\medskip
\forgotten

The edge-loop entropy of~$(X,g_t)$ can be bounded as follows.

\begin{proposition} \label{prop:log}
Suppose that the subexponential growth rate of all the subgroups~$i_*[\pi_1(F_p)]$ of~$\pi_1(X)$ is at most~$\nu$, where $F_p=\pi^{-1}(p)$ is a (connected) fiber of $\pi:X \to P$ and $i:F_p \hookrightarrow X$ is the inclusion map.
Then
\begin{equation} \label{eq:log}
\eent_e(X,g_t) = O \left( \tfrac{1}{t^\nu} \right)
\end{equation}
as $t$ goes to zero.
\end{proposition}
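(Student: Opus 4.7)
The approach is to count edge-loops based at a fixed vertex $\star$ of $X$ by decomposing each one into its \emph{long} edges (those sent by $\pi$ to an edge of $P$, of $g_t$-length $\sqrt{1+t^2}$) and its \emph{short} edges (those collapsed to a vertex of $P$, of $g_t$-length $t$). Every edge-loop $\gamma$ at $\star$ of $g_t$-length at most $T$ decomposes uniquely as
\[
\gamma = \beta_0 \, \alpha_1 \, \beta_1 \, \alpha_2 \cdots \alpha_n \, \beta_n,
\]
where each $\alpha_i$ is a long edge and each $\beta_i$ is a (possibly trivial) edge-path of short edges inside a single fiber $F_{p_i} = \pi^{-1}(p_i)$ over a vertex $p_i \in P$. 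The length constraint forces $n = O(T)$ and the total number $\sum_i N_i$ of short edges to be at most $T/t$, where $N_i$ is the edge-count of $\beta_i$. The number of combinatorial sequences $(\alpha_1, \ldots, \alpha_n)$ of long edges through the $1$-skeleton is at most $D^n$ for some constant $D = D(X)$, contributing a term linear in $T$ to $\log \mathcal{N}_e$.

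For a fixed long-edge skeleton, the endpoints of the $\beta_i$'s in $F_{p_i}$ are prescribed. Choose a basepoint $a_p$ in each connected fiber $F_p$; since $F_p$ is a finite simplicial complex, its combinatorial diameter is bounded uniformly in $p$, so we can pre- and post-compose each $\beta_i$ with reference edge-paths of bounded length to obtain a loop $\beta_i'$ based at $a_{p_i}$ of edge-count $N_i + O(1)$. The homotopy class of $\gamma$ in $X$ is then encoded by the sequence of long edges together with the tuple $(g_0, \ldots, g_n)$, where $g_i = i_*[\beta_i'] \in i_*[\pi_1(F_{p_i})] \leq \pi_1(X)$.

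Fix in each group $i_*[\pi_1(F_p)]$ a finite generating set $S_p$ obtained by pushing forward standard edge-loop generators of $\pi_1(F_p, a_p)$. The $S_p$-word length $w_i$ of $g_i$ is then bounded by $C(N_i + O(1))$ for a universal constant $C=C(X)$, so $\sum_i w_i \leq C' T/t$ for $t$ small. By the subexponential growth assumption, for every $\nu' > \nu$ there is a constant such that $|B_{S_p}(r)| \leq \exp(r^{\nu'})$ for every $r \geq 1$ and every $p$. Using the concavity of $x \mapsto x^{\nu'}$ (valid since $\nu' \leq 1$), which yields $\sum w_i^{\nu'} \leq (n+1)^{1-\nu'} (\sum w_i)^{\nu'}$, the number of admissible tuples is at most
\[
\sum_{w_0 + \cdots + w_n \leq C'T/t} \prod_{i=0}^n \exp(w_i^{\nu'}) \leq \binom{\lfloor C'T/t \rfloor + n + 1}{n+1} \, \exp\!\left( (n+1)^{1-\nu'} (C'T/t)^{\nu'} \right).
\]
With $n = O(T)$, the exponent reduces to $O(T/t^{\nu'})$, while the logarithm of the binomial coefficient is $O(T \log(1/t))$, negligible compared to $T/t^{\nu'}$ as $t \to 0$.

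Combining with the $O(T)$ contribution from sequences of long edges, we obtain $\log \mathcal{N}_e(X, g_t; T) \leq C'' T / t^{\nu'}$ for $t$ small, so that $\ent_e(X, g_t) \leq C''/t^{\nu'}$ for every $\nu' > \nu$, which is the content of~\eqref{eq:log}. The main delicate step is the reduction from a homotopy class in $\pi_1(X)$ to a tuple of elements in the subgroups $i_*[\pi_1(F_{p_i})]$ using only the combinatorial decomposition of $\gamma$ and reference paths of bounded length; once this bookkeeping is in place, the growth bound follows from a standard concavity-type estimate together with the subexponential growth hypothesis.
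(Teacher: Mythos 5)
Your proposal is correct and follows essentially the same route as the paper's proof: decompose each edge-loop into an alternation of ``long'' edges (over edges of $P$) and edge-paths in the fibers, close up the fiber segments into loops at chosen basepoints, bound the contribution of the long-edge skeleton by an exponential in the number of long edges, and bound the fiber contribution by the subexponential growth of the subgroups $i_*[\pi_1(F_p)]$ combined with the concavity estimate $\sum w_i^{\nu'} \leq (n+1)^{1-\nu'}(\sum w_i)^{\nu'}$ (which is exactly the H\"older inequality the paper invokes). One small point worth noting: you conclude $\ent_e(X,g_t) \leq C''/t^{\nu'}$ for every $\nu' > \nu$, rather than the literal bound $O(1/t^{\nu})$; this is in fact the more careful statement, since the hypothesis of ``subexponential growth rate at most $\nu$'' (a $\limsup$ condition) only yields $|B_{S_p}(r)| \leq A\exp(r^{\nu'})$ for $\nu' > \nu$, and the slack costs nothing in the application to Theorem~\ref{theo:A} because $\nu < \frac{m-k}{m}$ leaves room to choose $\nu' \in (\nu, \frac{m-k}{m})$. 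The paper, by contrast, writes the bound directly with $\nu$ in~\eqref{eq:Q}, which is a harmless but slightly imprecise shortcut.
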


\begin{proof}
Let us introduce a couple of definitions.
An edge of~$X$ is said to be \emph{long} if it is sent to an edge of~$P$ by the simplicial map~$\pi:X \to P$.
It is said to be \emph{short} otherwise (in which case, it is sent to a vertex of~$P$).
By construction, every long edge of~$X$ is of length~$\sqrt{1+t^2}$ and every short edge of~$X$ is of length~$t$.
Denote also by~$n_e$ the number of edges of~$X$.

%{\bf Counting edge-loops.}
Observe that $g_t = t^2 g_1$ on every (connected) fiber~$F_p=\pi^{-1}(p)$ of~$\pi:X \to P$.
Hence,
\[
\diam(F_p,g_t) = t \cdot \diam(F_p,g_1) \xrightarrow[t \to 0]{} 0.
\]
%where the diameter is measured with respect to the length structure induced by~$g_t$ on~$F_p$.
Thus, by taking $t$ small enough, we can assume that $\diam(F_p,g_t) < \frac{1}{2}$ for every vertex $p \in P$.

Let us estimate the number of homotopy classes of edge-loops in~$X$ of \mbox{$g_t$-length} at most~$T$.
Every edge-loop~$\gamma$ in~$X$ of $g_t$-length at most~$T$ decomposes as
\begin{equation} \label{eq:decomp}
\gamma = \alpha_1 \cup \beta_1 \cup \alpha_2 \cup \cdots \cup \beta_N
\end{equation}
where $\alpha_i$ is a long edge of~$X$ and $\beta_i$ is a possibly constant edge-path lying in a (connected) fiber $F_i=\pi^{-1}(p_i)$ of~$\pi:X \to P$ over a vertex~$p_i \in P$, which joins the terminal endpoint of~$\alpha_i$ to the initial endpoint of~$\alpha_{i+1}$.

Fix a basepoint $a_i \in F_i$.
Denote by~$\ell_i$ the $g_t$-length of~$\beta_i$.
Let~$\bar{\beta}_i$ be the loop of~$F_i$ based at~$a_i$ obtained by connecting the endpoints $x_i$ and~$y_i$ of~$\beta_i$ to the basepoint~$a_i$ along two paths of~$F_i$ of $g_t$-length at most $\diam(F_i,g_t) < \frac{1}{2}$.
The number $\mathcal{N}^e_{{x_i},{y_i}}(F_i \subseteq (X,g_t);\ell_i)$ of homotopy classes (relative to the endpoints) in~$X$ of edge-paths in~$F_i$ with endpoints $x_i$ and~$y_i$, and $g_t$-length at most~$\ell_i$ is bounded by the number of homotopy classes in~$X$ of loops in~$F_i$ based at~$a_i$ of $g_t$-length at most $\ell_i + 2 \, \diam(F_i,g_t)$.
Thus, 
\begin{align}
\mathcal{N}^e_{{x_i},{y_i}}(F_i \subseteq (X,g_t);\ell_i) & \leq \mathcal{N}(F_i \subseteq (X,g_t); \ell_i + 2 \, \diam(F_i,g_t)) \nonumber  \\
 & \leq \mathcal{N} \left( F_i \subseteq (X,g_1); \tfrac{\ell_i + 1}{t} \right) \label{eq:N1}
\end{align}
since $g_t = t^2 g_1$ on the fiber~$F_i$, where we refer to Definition~\ref{def:entN} for the definition of~$\mathcal{N}$.

By assumption, the subgroups $i_*[\pi_1(F_p)] \leqslant \pi_1(X)$ have a subexponential growth at most~$\nu$ and the same holds for $\mathcal{N}(F_p \subseteq (X,g_1);T)$; see~\cite{manning05}.
More specifically, there exists a function $Q(T)=A \exp(T^\nu)$ with $A>0$ such that
\begin{equation} \label{eq:Q}
\mathcal{N}(F_p \subseteq (X,g_1);T) \leq Q(T)
\end{equation}
for every vertex $p \in P$ and every $T \geq 0$.

It follows from~\eqref{eq:N1} and~\eqref{eq:Q} that the number of homotopy classes in~$X$ induced by the different possibilities for the edge-path~$\beta_i$ of length~$\ell_i$ is at most
\[
\mathcal{N}^e_{{x_i},{y_i}}(F_i \subseteq (X,g_t);\ell_i) \leq Q \left( \tfrac{\ell_i +1}{t} \right)
\]
where $\ell_i$ is the $g_t$-length of~$\beta_i$.

Now, there are at most $n_e$ possible choices for each long edge~$\alpha_i$.
(Recall that $n_e$ is the number of edges of~$X$.)
Hence, the number of homotopy classes of edge-loops in~$X$ of $g_t$-length at most~$T$ which decomposes as in~\eqref{eq:decomp} with $\beta_i$ of~$g_t$-length $\ell_i \leq \theta_i$, where $\theta_i = \left \lceil{\ell_i}\right \rceil$, is bounded by
\[
n_e^N \,  \prod_{i=1}^N Q \left( \tfrac{\theta_i +1}{t} \right).
\]
Since every edge~$\alpha_i$ is of $g_t$-length at least~$1$, we have $N \leq T$ and $\sum_{i=1}^N \ell_i \leq T-N$.
Since $\theta_i = \left \lceil{\ell_i}\right \rceil$, we also have $\sum_{i=1}^N \theta_i \leq T$.
Therefore, the number~$\mathcal{N}_e(X,g_t;T)$ of homotopy classes of edge-loops in~$X$ of $g_t$-length at most~$T$ is bounded by
\begin{equation} \label{eq:double}
\mathcal{N}_e(X,g_t;T) \leq \sum_{N \leq \left \lfloor{T}\right \rfloor} \sum_{(\theta_i)_N \leq \left \lfloor{T}\right \rfloor} n_e^N \, \prod_{i=1}^N Q \left( \tfrac{\theta_i +1}{t} \right)
\end{equation}
where the second sum is over all $N$-tuples $(\theta_1,\dots,\theta_N)$ of positive integers such that \mbox{$\sum_{i=1}^N \theta_i \leq \left \lfloor{T}\right \rfloor$}.

The double sum~\eqref{eq:double} has at most $T \, 2^T$ terms (the first sum has $\left \lfloor{T}\right \rfloor$ terms and the second sum has $2^{\left \lfloor{T}\right \rfloor-1}$ terms given by the distinct decomposition of the integer~$\left \lfloor{T}\right \rfloor$).
Consider the largest term of~\eqref{eq:double} attained by some $N \leq T$ and $(\theta_i)_N \leq T$.
We have
\begin{align}
\mathcal{N}_e(X,g_t;T) & \leq T \, 2^T n_e^T \, \prod_{i=1}^N Q \left( \tfrac{\theta_i +1}{t} \right) \label{eq:max} \\
 & \leq T \, 2^T n_e^T \, A^T \exp \left( \frac{1}{t^\nu} \sum_{i=1}^N (\theta_i +1)^\nu \right). \nonumber
\end{align}
Applying H\"older's inequality to the sum $\sum_{i=1}^N (\theta_i +1)^\nu$ with $p=\frac{1}{1-\nu}$ and $q=\frac{1}{\nu}$, we obtain
%\begin{align*}
%\sum_{i=1}^N (\theta_i +1)^\nu & \leq \left( \sum_{i=1}^N 1^p \right)^{1/p} \cdot \left( \sum_{i=1}^N (\theta_i +1) \right)^{1/q} \\
% & \leq T^{1-\nu} \cdot 2^\nu T^\nu \\
% & \leq 2T
%\end{align*}
\[
\sum_{i=1}^N (\theta_i +1)^\nu \leq \left( \sum_{i=1}^N 1^p \right)^{1/p} \cdot \left( \sum_{i=1}^N (\theta_i +1) \right)^{1/q} \leq T^{1-\nu} \cdot 2^\nu T^\nu \leq 2T
\]
since $\nu q =1$, $N \leq T$ and $\sum_{i=1}^N (\theta_i +1) \leq \sum_{i=1}^N \theta_i +N \leq 2T$.
Hence,
\[
\mathcal{N}_e(X,g_t;T) \leq T \, 2^T \, n_e^T \, A^T \, \exp \left( \frac{2T}{t^\nu} \right).
\]
This implies that
\[
\eent_e(X,g_t) \leq \log(2n_eA) + \frac{2}{t^\nu}.
\]
\forget
Let $\theta_i' = \theta_i +1$.
Note that $\sum_{i=1}^N \theta_i' \leq T$.
Using the expression of~$Q$ and the concavity of the nondecreasing function $\log(1+\cdot)$, we obtain
\begin{align}
\log \left( \prod_{i=1}^N Q \left( \tfrac{\theta_i'}{t} \right) \right) %& = \sum_{i=1}^N \log Q \left( \tfrac{\theta_i'}{t} \right) \nonumber \\
 & \leq T \, \log(a) + d \, \sum_{i=1}^N \log \left( 1+\tfrac{\theta_i'}{t} \right) \nonumber \\
 & \leq T \, \log(a) + d \, N \, \log \left( 1+\tfrac{T}{Nt} \right). \label{eq:L1}
\end{align}
Introduce $f_t(x) = x \, \log(1+\frac{1}{xt})$ with $x \in [0,1]$.
For $t \leq \frac{1}{e-1}$, we have 
\[
f'_t(x) = \log(1+\tfrac{1}{xt}) - \tfrac{1}{xt+1} \geq \log(1+\tfrac{1}{t}) -1 \geq 0.
\]
Thus, for $t$ small enough, we deduce that 
\begin{equation} \label{eq:L2}
\tfrac{N}{T} \, \log \left( 1+\tfrac{T}{Nt} \right) = f_t(\tfrac{N}{T}) \leq f_t(1) = \log \left( 1+\tfrac{1}{t} \right).
\end{equation}
Taking the log in~\eqref{eq:Ne}, dividing by~$T$ and letting~$T$ go to infinity, we obtain from~\eqref{eq:L1} and~\eqref{eq:L2} that
\[
\eent_e(X,g_t) = O \left( \log \left( \tfrac{1}{t} \right) \right)
%\lim_{T \to \infty} \frac{1}{T} \, \log \mathcal{N}_e(X,g_t;T)  = O \left( \log \left( \frac{1}{t} \right) \right)
\]
as $t$ goes to zero.
\forgotten
\end{proof}

\begin{remark}
If $X$ satisfies the fiber collapsing assumption with polynomial growth rate, we can derive a stronger bound than~\eqref{eq:log}.
Namely, the edge-loop entropy of~$(X,g_t)$ has a logarithmic growth when $t$ goes to zero, that is,
\[
\eent_e(X,g_t) = O \left( \log \left( \tfrac{1}{t} \right) \right).
\]
The argument is similar to the proof of Proposition~\ref{prop:log} until the inequality~\eqref{eq:max}, except that $Q$ should be replaced by a polynomial of the form $Q(T)=a(T+1)^d$ with~$a>0$.
Now, using the expression of~$Q$, the concavity of the nondecreasing function $\log(1+\cdot)$, and the inequalities $N \leq T$ and $\sum_{i=1}^N (\theta_i+1) \leq 2T$, we obtain
\begin{align}
\log \left( \prod_{i=1}^N Q \left( \tfrac{\theta_i+1}{t} \right) \right) %& = \sum_{i=1}^k \log Q \left( \tfrac{\tau_i'}{t} \right) \nonumber \\
 & \leq T \, \log(a) + d \, \sum_{i=1}^N \log \left( 1+\tfrac{\theta_i+1}{t} \right) \nonumber \\
 & \leq T \, \log(a) + d \, N \, \log \left( 1+\tfrac{2T}{Nt} \right). \label{eq:L1}
\end{align}
Introduce $f_t(x) = x \, \log(1+\frac{1}{xt})$ with $x \in [0,1]$.
For $t \leq \frac{1}{e-1}$, we have 
\[
f'_t(x) = \log(1+\tfrac{1}{xt}) - \tfrac{1}{xt+1} \geq \log(1+\tfrac{1}{t}) -1 \geq 0.
\]
Thus, for $x=\frac{N}{2T}$ and $t$ small enough, we deduce that 
\begin{equation} \label{eq:L2}
\tfrac{1}{2} \cdot \tfrac{N}{T} \, \log \left( 1+\tfrac{2T}{Nt} \right) = f_t(\tfrac{N}{2T}) \leq f_t(1) = \log \left( 1+\tfrac{1}{t} \right).
\end{equation}
Taking the log in~\eqref{eq:max}, dividing by~$T$ and letting~$T$ go to infinity, we obtain from~\eqref{eq:L1} and~\eqref{eq:L2} that
\[
\eent_e(X,g_t) = O \left( \log \left( \tfrac{1}{t} \right) \right)
%\lim_{T \to \infty} \frac{1}{T} \, \log \mathcal{N}_e(X,g_t;T)  = O \left( \log \left( \frac{1}{t} \right) \right)
\]
as $t$ goes to zero.
\end{remark}

%%%%%%%%%%%%%%%%%%%%%%%%%%%%%%%%%%%%%%%%%%%%%%%%%%%%%%%%%%%%%%%%%%%%%%%%%%%%%%%%%%%%%%%%%%%%%%%%%%%%%%%%%%%%%%%%%%%%%%%%%%%%
\subsection{Fiber collapsing assumption and zero minimal volume entropy}
%%%%%%%%%%%%%%%%%%%%%%%%%%%%%%%%%%%%%%%%%%%%%%%%%%%%%%%%%%%%%%%%%%%%%%%%%%%%%%%%%%%%%%%%%%%%%%%%%%%%%%%%%%%%%%%%%%%%%%%%%%%%
\mbox { }

\medskip

We show the following result (stated in the introduction as Theorem~\ref{theo:omega=0}).

%%%%%%%%%%%%%%%%%%%%%%%%%%%%%%%%%%%%%%%%%%%%%%%%%%%%%%%%%%%%%%%%%%%%%%%%%%%%%%%%%%%%%%%%%%%%%%%%%%%%%%%%%%%%%%%%%%%%%%%%%%%%
\begin{theorem} \label{theo:A}
Let~$X$ be a connected finite simplicial $m$-complex.
Suppose there exists a simplicial map $\pi:X \to P$ to a simplicial $k$-complex~$P$ with~$k<m$ such that for every connected component~$F_p$ of every fiber~$\pi^{-1}(p)$ with $p \in P$, the finitely generated subgroup~$i_*[\pi_1(F_p)]$ of~$\pi_1(X)$ has subexponential growth rate at most~$\nu$.
Suppose that $\nu < \frac{m-k}{m}$.
Then $X$ has zero minimal volume entropy.
\end{theorem}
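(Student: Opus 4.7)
The plan is to combine the three technical ingredients built up in Sections~\ref{sec:g_t}--\ref{sec:edge-loop} into a one-parameter family estimate. By Proposition~\ref{prop:connected}, I can assume that the simplicial map $\pi:X \to P$ is surjective with connected fibers, as this does not affect the hypothesis on the subgroups $i_*[\pi_1(F_p)]$. I then endow $X$ with the family of piecewise flat metrics $g_t$, $t \in (0,1]$, constructed in Section~\ref{sec:g_t}, and will show that
\[
\ent(X,g_t)\,\vol(X,g_t)^{1/m} \xrightarrow[t \to 0^+]{} 0,
\]
which by the very definition of the minimal volume entropy forces $\omega(X) = 0$.

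On the volume side, the estimate \eqref{eq:volt} gives $\vol(X,g_t) = O(t^{m-k})$ directly, so $\vol(X,g_t)^{1/m} = O(t^{(m-k)/m})$. For the entropy side, I first translate the full volume entropy into the discrete edge-loop entropy. Choosing the basepoint $\star \in X$ to be a vertex, every based loop has vertex endpoints, so Proposition~\ref{prop:deforming} applies and deforms it rel $\star$ into an edge-loop of $g_t$-length at most $C_m$ times the original. This yields $\mathcal{N}(X,g_t;T) \leq \mathcal{N}_e(X,g_t;\,C_m T)$, and via the formulation \eqref{eq:entN} of the volume entropy,
\[
\ent(X,g_t) \leq C_m \cdot \ent_e(X,g_t).
\]
Combining this with the edge-loop entropy bound of Proposition~\ref{prop:log}, which uses precisely the hypothesis that the subgroups $i_*[\pi_1(F_p)]$ have subexponential growth rate at most $\nu$, gives $\ent(X,g_t) = O(1/t^\nu)$.

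Multiplying the two estimates,
\[
\ent(X,g_t)\,\vol(X,g_t)^{1/m} \;=\; O\!\left(t^{(m-k)/m - \nu}\right),
\]
and the standing hypothesis $\nu < (m-k)/m$ makes the exponent strictly positive, so the right-hand side tends to $0$ as $t \to 0^+$. Taking the infimum over all piecewise Riemannian metrics yields $\omega(X)=0$.

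There is no genuine obstacle left at this stage: all the analytical work has been absorbed into Propositions~\ref{prop:deforming} and~\ref{prop:log}, and the inequality $\nu < (m-k)/m$ is precisely the sharp threshold that lets the polynomial volume collapse $t^{(m-k)/m}$ beat the subexponential entropy blow-up $t^{-\nu}$. The one point worth verifying carefully is that Proposition~\ref{prop:deforming}, stated for arcs between two vertices, does suffice to compare $\ent$ and $\ent_e$; choosing the basepoint to be a vertex (together with the fact that the resulting edge-loop is homotopic to the original one rel the common basepoint) handles this without extra work.
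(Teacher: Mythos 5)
Your proposal is correct and follows essentially the same route as the paper's own proof: reduce to connected fibers via Proposition~\ref{prop:connected}, take the collapsing metrics $g_t$, combine the volume bound~\eqref{eq:volt}, the deformation comparison $\ent(X,g_t) \leq C_m\,\ent_e(X,g_t)$ from Proposition~\ref{prop:deforming}, and the edge-loop entropy bound~\eqref{eq:log} to get $\ent(X,g_t)\,\vol(X,g_t)^{1/m} = O(t^{(m-k)/m-\nu}) \to 0$. The only addition is your explicit remark about choosing the basepoint to be a vertex, which is implicit in the paper's argument.
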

%%%%%%%%%%%%%%%%%%%%%%%%%%%%%%%%%%%%%%%%%%%%%%%%%%%%%%%%%%%%%%%%%%%%%%%%%%%%%%%%%%%%%%%%%%%%%%%%%%%%%%%%%%%%%%%%%%%%%%%%%%%%

%This theorem immediately implies the following result.

%\begin{corollary} \label{coro:A}
%Every connected finite simplicial complex~$X$ satisfying the polynomial fiber collapsing assumption has zero minimal volume entropy.
%\end{corollary}

%Let us define a family of piecewise flat metrics~$g_t$ on~$X$ induced by a simplicial embedding deformation $\chi_t: X \hookrightarrow E$ into a Euclidean space.
%(Here, a simplicial embedding means an embedding whose restriction to each simplex is an affine map.)

\begin{proof}
By Proposition~\ref{prop:connected}, we can assume that the simplicial map $\pi:X \to P$ in Theorem~\ref{theo:A} is onto and that its fibers~$F_p$ are connected.
Consider the family of piecewise flat metrics~$g_t$ on~$X$ defined in Section~\ref{sec:g_t}.
Recall that $\ent_e(X,g_t) \leq \ent(X,g_t)$; see Definition~\ref{def:entN}.
By Proposition~\ref{prop:deforming}, a reverse inequality holds true.
Namely, there exists $C_m >0$ such that 
\begin{equation} \label{eq:entropies}
\ent(X,g_t) \leq C_m \, \ent_e(X,g_t)
\end{equation}
for every $t \in (0,1]$.
By~\eqref{eq:volt} and~\eqref{eq:log}, we deduce that
\[
\eent(X,g_t) \, \vol(X,g_t)^\frac{1}{m} = O \left( t^{\frac{m-k}{m} - \nu} \right).
\]
Since $\nu < \frac{m-k}{m}$, we conclude that $\eent(X,g_t) \, \vol(X,g_t)^\frac{1}{m}$ converges to zero as $t$ goes to zero.
\end{proof}

Combining Theorem~\ref{theo:A} and Proposition~\ref{prop:cover}, we immediately derive the following result, which can also be expressed in terms of covering collapsing assumption.

%%%%%%%%%%%%%%%%%%%%%%%%%%%%%%%%%%%%%%%%%%%%%%%%%%%%%%%%%%%%%%
%%%%%%%%%%%%%%%%%%%%%%%%%%%%%%%%%%%%%%%%%%%%%%%%%%%%%%%%%%%%%
\begin{corollary}\label{coro:collapsing}
Every connected finite simplicial $m$-complex~$X$ which admits a covering of multiplicity~$k+1$ by open subsets of subexponential $\pi_1$-growth in~$X$ with subexponential growth rate at most $\nu < \frac{m-k}{m}$ has zero minimal volume entropy. 
\end{corollary}
%%%%%%%%%%%%%%%%%%%%%%%%%%%%%%%%%%%%%%%%%%%%%%%%%%%%%%%%%%%%%%%%%%%%%%%%%%%%%%%%%%%%%%%%%%%%%%%%%%%%%%%%%%%%%%%%%%%%%%%%%%%%

%%%%%%%%%%%%%%%%%%%%%%%%%%%%%%%%%%%%%%%%%%%%%%%%%%%%%%%%%%%%%%%%%%%%%%%%%%%%%%%%%%%%%%%%%%%%%%%%%%%%%%%%%%%%%%%%%%%%%%%%%%%%

We conclude with an application.
Let us recall the definition of an $F$-structure, first introduced by Cheeger-Gromov in a different context; see~\cite{ChG} and \cite{ChGbis}.

\begin{definition} \label{def:F-structure}
A closed manifold~$M$ admits an \emph{$F$-structure} if there are a locally finite open covering~$\{U_i\}$ of~$M$, finite normal covers~$\pi_i:\tilde{U}_i \to U_i$ and effective smooth actions of tori~$\T^{k_i}$ on~$\tilde{U}_i$ which extend the action of the deck transformation group such that if $U_i$ and~$U_j$ intersect each other, then $\pi_i^{-1}(U_i \cap U_j)$ and $\pi_j^{-1}(U_i \cap U_j)$ have a common cover space on which the lifting actions of~$\T^{k_i}$ and~$\T^{k_j}$ commute.
We also assume that some orbits have positive dimension.
See~\cite{ChG} or \cite{ChGbis} for a more precise definition.
The \emph{rank} of an $F$-structure is the minimal dimension of the orbits.
\end{definition}

\forget
\begin{definition} \label{def:F-structure}
A closed manifold~$M$ admits an \emph{$F$-structure} if the following conditions are satisfied:
\begin{itemize}
\item there is a locally finite open covering~$\{U_i\}$ of~$M$ with finite normal covers~$\pi_i:\tilde{U}_i \to U_i$;
\item there is an effective smooth action of a torus~$\T^{k_i}$ on~$\tilde{U}_i$ which extends the deck transformations;
\item if $U_i$ and~$U_j$ intersect each other, then $\pi_i^{-1}(U_i \cap U_j)$ and $\pi_j^{-1}(U_i \cap U_j)$ have a common cover space on which the lifting actions of~$\T^{k_i}$ and~$\T^{k_j}$ commute.
\end{itemize}
\end{definition}
\forgotten

As an application of Corollary~\ref{coro:polynomial}, we derive the following result, which is also a consequence of Paternain and Petean's work on the connection between the topological entropy of the geodesic flow and $F$-structures; see~\cite[Theorem~A]{PP03}.

%%%%%%%%%%%%%%%%%%%%%%%%%%%%%%%%%%%%%%%%%%%%%%%%%%%%%%%%%%%%%%%%%%%%%%%%%%%%%%%%%%%%%%%%%%%%%%%%%%%%%%%%%%%%%%%%%%%%%%%%%%%%
\begin{corollary}
Every closed manifold admitting an $F$-structure (of possibly zero rank) has zero minimal volume entropy.
\end{corollary}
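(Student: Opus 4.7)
The plan is to deduce the corollary directly from Corollary~\ref{coro:polynomial}. It is enough to verify that a closed manifold~$M$ carrying an $F$-structure satisfies the fiber $\pi_1$-growth collapsing assumption with polynomial growth rate, onto some simplicial complex~$P$ of dimension strictly less than~$m=\dim M$.

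First, I would build the simplicial map $\pi:M\to P$ from the $F$-structure data. Starting from the locally finite cover~$\{U_i\}$, the finite normal coverings $\pi_i:\tilde U_i\to U_i$, and the commuting torus actions of $\T^{k_i}$ on~$\tilde U_i$, define an equivalence relation~$\sim$ on~$M$ by identifying two points whenever they lie in the same $\T^{k_i}$-orbit in some~$\tilde U_i$ (up to the deck action) and extending by transitivity. The compatibility condition of Definition~\ref{def:F-structure} (existence of common covers on which the lifted actions commute) ensures that $\sim$ is a well-defined equivalence relation. Set $P = M/\!\sim$ and choose an $F$-structure-compatible triangulation of~$M$ so that the quotient map $\pi:M\to P$ becomes simplicial; this requires suitable barycentric subdivisions adapted to the open cover and the torus orbits, which can be arranged because the actions are smooth.

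Second, I would analyse the fibers of~$\pi$. A connected component $F_p$ of a fiber $\pi^{-1}(p)$ is obtained by gluing finitely many pieces of the form $\T^{k_i}/\Gamma_i$ (orbits of lifted torus actions modulo finite deck groups) along subactions that commute after pulling back to common covers. Consequently, $F_p$ is an infra-torus: its fundamental group is virtually abelian, hence of polynomial growth by Gromov's theorem. The induced image $i_*[\pi_1(F_p)]$ inside $\pi_1(M)$ is a quotient of a finitely generated group of polynomial growth, so it also has polynomial growth. Therefore the fiber $\pi_1$-growth collapsing assumption with polynomial growth rate is satisfied, and Corollary~\ref{coro:polynomial} gives $\omega(M)=0$.

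The main obstacle will be the dimensional bound $\dim P < m$, in particular in the case of an $F$-structure of zero rank, where isolated fixed points of the torus actions may occur. The assumption that some orbits have positive dimension prevents any open subset of~$M$ from consisting entirely of fixed points, and by shrinking and refining the cover one may arrange that every maximal simplex of the chosen triangulation of~$M$ meets a positive-dimensional orbit; then the corresponding simplex of~$P$ drops dimension by at least the rank of that orbit. Verifying this dimension drop globally, together with the coherence of the simplicial structure on~$P$, is the technical core of the argument; once it is in place, the verification of the polynomial growth hypothesis on fibers is a direct consequence of the virtually abelian nature of the local orbits.
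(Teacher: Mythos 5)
Your overall strategy is the same as the paper's: form the orbit space~$P$ of the $F$-structure, show that the quotient map $M \to P$ has fibers with virtually abelian (hence polynomial growth) fundamental group, check that $\dim P < m$, and then invoke Corollary~\ref{coro:polynomial}. The treatment of the fibers is correct in spirit — orbits are covered by tori, and the images of $\pi_1(F_p)$ are almost abelian and thus of polynomial growth.

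However, the key dimension claim is not correctly justified. You write that ``the assumption that some orbits have positive dimension prevents any open subset of $M$ from consisting entirely of fixed points,'' but this does not follow: that hypothesis only guarantees the existence of at least one positive-dimensional orbit somewhere and, taken alone, says nothing against open sets of fixed points. The actual reason the fixed point set has positive codimension is that the torus actions in an $F$-structure are required to be \emph{effective}, and the Slice Theorem (see~\cite[Appendix~B]{GGK}) then forces the trivial orbits to form a submanifold of codimension at least one (at least two in the orientable case); consequently the orbit space is an orbifold of dimension at most $m-1$. This is precisely the fact the paper cites. The subsequent remark about refining a triangulation so that every maximal simplex meets a positive-dimensional orbit also does not by itself yield $\dim P \leq m-1$, and would not hold if codimension-zero fixed loci existed. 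So the gap you flag as ``the technical core'' is real, and the fix is to appeal to effectiveness plus the Slice Theorem rather than to the mere existence of some positive-dimensional orbit.
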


\begin{proof}
By the Slice Theorem and its consequences, see~\cite[Appendix~B]{GGK}, we derive the following properties.
The orbits of the $F$-structure of a closed $m$-manifold~$M$ partition the manifold into closed submanifolds covered by tori; see also~\cite{ChG} and~\cite{PP03}.
The trivial orbits form a submanifold of codimension at least one (at least two if the manifold is orientable) and the orbit space is an orbifold of dimension at most~$m-1$.
Now, since the fibers of the natural projection from~$M$ to the orbit space have virtually abelian fundamental groups (and virtually abelian groups have polynomial growth by~\cite{gro81}), the manifold~$M$ satisfies the fiber collapsing assumption with polynomial growth rate and has zero minimal volume entropy by Corollary~\ref{coro:polynomial}.
\end{proof}

%%%%%%%%%%%%%%%%%%%%%%%%%%%%%%%%%
\subsection{Examples of manifolds satisfying the fiber collapsing assumption} \label{sec:ex}

\mbox { }

\medskip

In this section, we construct a closed orientable manifold with fundamental group of exponential growth satisfying the fiber collapsing assumption with fibers of subexponential growth which do not have polynomial growth.
Furthermore, this example satisfies the condition on the subexponential growth rate of the subgroups~$i_*[\pi_1(F_p)]$ of Theorem~\ref{theo:A} (which implies that their minimal volume entropy is zero).
%We also show through another example which satisfies the fiber collapsing assumption but does not satisfy the condition of Theorem~\ref{theo:A} that the natural method presented in the proof of Theorem~\ref{theo:A} consisting of shrinking the fibers does not readily apply.

\medskip

The first Grigorchuk group~$G$ was defined in~\cite{grigorchuk1}.
It is the first example of a finitely generated group of intermediate growth, that is, its growth is subexponential but not polynomial; see~\cite{grigorchuk} and~\cite{grigorchuk2}.
The exact value of the subexponential growth rate of~$G$ has recently been computed in~\cite{EZ20}.
It is roughly equal to
\[
\nu(G) \simeq 0.7674 \in [\tfrac{3}{4},\tfrac{4}{5}].
\]
The group~$G$ is a finitely generated recursively presented group -- a description of its presentation can be found in~\cite{lys} -- but it is not finitely presented.
It is an open question whether finitely presented groups of intermediate growth exist.
By Higman's embedding theorem~\cite{hig61}, the group~$G$ can be embedded into a finitely presented group.
A concrete realization of such an embedding is given in~\cite[Theorem~1]{grigorchuk2}.
The construction goes as follows.

Consider the group~$\bar{G}$ given by the following presentation:
\begin{multline} \label{eq:presGbar}
\bar{G} = \langle a,c,d,u \mid a^2 = c^2 = d^2 = (ad)^4 = (adacac)^4 = e ;\\
 u^{-1} a u = aca, u^{-1}c u=dc, u^{-1}d u = c \rangle.
\end{multline}
The group~$\bar{G}$ contains the first Grigorchuk group~$G$. %; see~\cite{grigorchuk2}.
More precisely, the group~$\bar{G}$ is an HNN-extension of~$G$:
\[
\bar{G} = \langle G,u \mid u^{-1} x u = \sigma(x) \mbox{ for every } x \in G \rangle.
\]
where $\sigma: G \to G$ is a monomorphism. %; see~\cite{grigorchuk2} for further details.
The subgroup~$G \leqslant \bar{G}$ is generated by~$a$, $c$ and~$d$.
Note that $\bar{G}$ contains a free subsemigroup with two generators, and therefore has exponential growth.

\medskip

Let us construct an orientable closed $5$-dimensional manifold~$M$ with $\pi_1(M)=\bar{G}$ as follows.
Define
\begin{equation} \label{eq:connectedsum}
N = (\R P^5)_a \# (\R P^5)_c \# (\R P^5)_d \# (S^1 \times S^4)_u
\end{equation}
where the indices $a$, $c$, $d$ and~$u$ correspond to the generators of~$G$.
Note that $\R P^5$ is orientable and so is~$N$.
Take five loops~$\gamma_1,\dots,\gamma_5$ in the homotopy classes $(ad)^4$, $(adacac)^4$, $u^{-1} a u aca$, $u^{-1}c udc$ and $u^{-1}d uc$ of $\pi_1(N) = \Z_2 * \Z_2 * \Z_2 * \Z$.
Since $N$ is orientable, the normal bundles of~$\gamma_1,\dots,\gamma_5$ are trivial.
Placing the curves in generic position, we can assume that the loops $\gamma_1,\dots,\gamma_5$ are smooth simple closed curves which do not intersect each other.
Denote by~$M$ the orientable closed manifold obtained from~$N$ by spherical surgeries of type~$(1,4)$ along~$\gamma_1,\dots,\gamma_5$.
See~\cite{Milnor61} for an account on spherical surgeries.
Since spherical surgeries of type~$(1,4)$ correspond to attaching index~$2$ handles, the fundamental group of~$M$ is given by the presentation~\eqref{eq:presGbar}.
That is, $\pi_1(M) = \bar{G}$.

Let us construct a piecewise linear map $\pi:M \to S^1$ with subexponential growth fibers.
Consider the natural map $N \to S^1$ which takes the terms~$(\R P^5)_a \# (\R P^5)_c \# (\R P^5)_d$ in the connected sum~\eqref{eq:connectedsum} to a point $p_0 \in S^1$ and projects the last term~$(S^1 \times S^4)_u$ to the $S^1$-factor of the product.
By the expression of the relations of the presentation~\eqref{eq:presGbar} of~$\bar{G}$, the images by $N \to S^1$ of the loops $\gamma_1,\dots,\gamma_5$ are contractible in~$S^1$.
Thus, the map $N \to S^1$ extends to the handles of~$M$, which yields a map $M \to S^1$.
Deforming the map, if necessary, by sending the complement of a tubular neighborhood of a regular fiber~$F$ of~$M \to S^1$ to a point, we can assume that the map $M \to S^1$ is smooth with a unique critical value~$p_0 \in S^1$ and that the inverse image $\pi^{-1}(S^1 \setminus \{ p_0 \})$ has a product structure~$(0,1) \times F$ whose vertical slices coincide with the fibers of~$M \to S^1$.
We can further deform $M \to S^1$ into a piecewise linear map $\pi:M \to S^1$ by taking fine enough triangulations of~$M$ and~$S^1$, and by applying the simplical approximation theorem, without changing the topology of the fibers above~$S^1 \setminus \{ p_0\}$.

Let us show that $\ker \pi_* = G$, where $\pi_*:\pi_1(M) \to \pi_1(S^1)$ is the $\pi_1$-homomorphism induced by~$\pi:M \to S^1$.
Since the subgroup~$G \leqslant \bar{G}$ is generated by~$a$, $c$ and~$d$, the inclusion~$G \leqslant \ker \pi_*$ is obvious.
For the reverse inequality, observe that every element $w \in \ker \pi_*$ can be represented by a word in the letters $a$, $b$, $d$ and~$u$ with a minimal number of occurrences of~$u^{\pm 1}$.
By construction, $\pi_*(a) = \pi_*(c) = \pi_*(d) = 0$ and $\pi_*(u)$ is a generator of~$\pi_1(S^1)$.
Thus, the word~$w$ has as many~$u$'s as~$u^{-1}$'s.
If the word~$w$ contains a letter~$u$ or~$u^{-1}$, then it contains a subword~$uw'u^{-1}$ or~$u^{-1}w'u$, where $w'$ is a word in~$a$, $c$ and~$d$ (without~$u$).
According to the presentation~\eqref{eq:presGbar}, these subwords can be replaced with subwords in the letters~$a$, $b$, $d$ (without~$u$) in the representation of~$w$, which contradicts the choice of the word representing~$w$.
Thus, $w$ lies in the subgroup~$G$ of~$\bar{G}$ generated by $a$, $c$ and~$d$.
That is, $\ker \pi_* \leqslant G$.
%The reverse inclusion is obvious.
Hence, $\ker \pi_* =G$.

Now, since $i_*[\pi_1(F_{p_0})]$ is a subgroup of~$\ker \pi_*$ containing the generators $a$, $c$ and~$d$ of~$G$, we derive that $i_*[\pi_1(F_{p_0})] = \ker \pi_* = G$.
All the other fibers~$F_p \simeq F$ with $p \in S^1$ different from~$p_0$ can be deformed into~$F_{p_0}$.
More precisely, there is a homotopy $h_t:F_p \to M$ starting at the inclusion map $i:F_p \hookrightarrow M$ and ending in~$F_{p_0}$ (\ie, $h_1:F_p \to F_{p_0}$).
This implies that $i_*[\pi_1(F_p)]$ is a subgroup of~$i_*[\pi_1(F_{p_0})] = G$.
Since $G$ has subexponential growth, the image~$i_*[\pi_1(F_p)]$ of the fundamental group of every fiber~$F_p$ of $\pi:M \to S^1$ has also subexponential growth, where~$p \in S^1$.

Since $\nu(G) < \frac{m-k}{m} = \frac{4}{5}$ (with $m=5$ and~$k=1$), the orientable closed $5$-dimensional manifold~$M$ satisfies the fiber collapsing assumption of Theorem~\ref{theo:A}.

\begin{remark}
This example shows that the effect of the collapsing can be due to fiber subgroups of intermediate growth (which are not finitely presented) and not merely of polynomial growth.
\end{remark}

\begin{remark}
Anticipating on the notion of amenable group, see Definition~\ref{def:amenable}, observe that the group~$\bar{G}$ is amenable; see~\cite{grigorchuk2}.
Therefore, by Gromov's vanishing simplicial volume theorem (see Theorem~\ref{theo:vanishing}), every manifold with fundamental group~$\bar{G}$ has zero simplicial volume.
\end{remark}

\begin{remark}
One can show that the manifold~$M$ is essential.
(This is not direct and requires some work.)
An easier way to obtain an essential manifold~$M'$ is to modify our construction by taking the connected sum of~$M$ with a nilmanifold, say~$\T^m$.
In this case, we collapse~$M' = \T^m \# M$ to the graph~$P=[0,1] \cup_{\{1\} = p_1} S^1$ so that the preimage of~$p_1 \neq p_0$ is the attaching sphere of the connected sum, the torus~$\T^m \setminus B^m$ with a ball removed is sent to~$[0,1]$ and the term~$M \setminus B^m$ is sent to~$S^1$ as before.
The manifold~$M'$ still satisfies the fiber collapsing assumption of Theorem~\ref{theo:A} with the map $\pi:M' \to P$, and the image~$i_*[\pi_1(F'_{p_0})]$ of the fundamental group of the fiber~$F'_{p_0}$ of $\pi:M' \to P$ still agrees with the group~$G$ of intermediate growth.
\end{remark}

\forget
Fix $m \geq 4$.
Let us construct a closed $m$-manifold~$N$ with $\pi_1(N)=K$ as follows.
Define
\[
N' = (\R P^m)_a \# (\R P^m)_c \# (\R P^m)_d
\]
where the indices $a$, $c$ and~$d$ correspond to the generators of~$G$.
Take two closed curves~$\gamma_1$ and~$\gamma_2$ in the homotopy classes $(ad)^4$ and $(adacac)^4$ of $\pi_1(N') = \Z_2 * \Z_2 * \Z_2$.
Placing the curves in generic position if necessary, we can assume that $\gamma_1$ and~$\gamma_2$ are smooth simple closed curves which do not intersect.
Denote by~$N$ the closed manifold obtained from~$N'$ by spherical surgeries of type~$(1,m-1)$ along~$\gamma_1$ and~$\gamma_2$.
Since spherical surgeries of type~$(1,m-1)$ correspond to attaching index~$2$ handles, the fundamental group of~$N$ is given by the presentation~\eqref{eq:presG}, \ie, $\pi_1(N) = K$.
Note that there is a natural embedding $X_K \hookrightarrow N$ of~$X_K$ into~$N$.

\medskip

Let us construct a closed $m$-manifold~$M$ with $\pi_1(N)=\bar{G}$ as follows.
Define
\begin{equation} \label{eq:connectedsum}
M' = N \# (S^1 \times S^{m-1})_u
\end{equation}
where the index~$u$ corresponds to a new generator of the fundamental group.
Denote by $\pi':M' \to S^1$ the natural map which takes the first term~$N$ of the connected sum~\eqref{eq:connectedsum} to a point $p_0 \in S^1$ and projects the second term~$S^1 \times S^{m-1}$ to the $S^1$-factor of the product.
The map~$\pi':M' \to S^1$ is smooth with a unique critical value~$p_0 \in S^1$.
In this construction, we perform the connected sum~\eqref{eq:connectedsum} away from the $2$-complex~$X_K \subseteq N$.
Thus, $X_K \subseteq M'$ and $\pi'(X_K)=p_0$.
Observe that $\pi'_*(a) = \pi'_*(c) = \pi'_*(d) = 0$ and $\pi'_*(u)$ is a generator of~$\pi_1(S^1)$, where $\pi'_*:\pi_1(M') \to \pi_1(S^1)$ is the $\pi_1$-homomorphism induced by~$\pi'$.
Take three closed curves $\gamma_3$, $\gamma_4$ and~$\gamma_5$ of~$M'$ in the homotopy classes $u^{-1} a u aca$ , $u^{-1}c u cd$ and $u^{-1}d u c$ of~$\pi_1(M')$.
As previously, we can assume that the curves are smooth and simple with no intersection.
We can also assume that they do not intersect the $2$-complex~$X_K \subseteq M'$.
Denote by~$M$ the manifold obtained from~$M'$ by spherical surgeries of type~$(1,m-1)$ along~$\gamma_3$, $\gamma_4$ and~$\gamma_5$.
By construction, the fundamental group of~$M$ is given by the presentation~\eqref{eq:presG}.
That is, $\pi_1(M)=\bar{G}$.

\medskip

Let us construct a piecewise linear map $\pi:M \to S^1$ with subexponential growth fibers.
By definition of the curves, the images~$\pi'(\gamma_i)$ with $i=3,4,5$ are contractible in~$M'$.
Therefore, the map $\pi':M' \to S^1$ extends to the handles attached to~$M'$ by surgery, which gives rise to a map $\pi:M \to S^1$.
Moving all the critical values of~$\pi:M \to S^1$ to~$p_0$, we can assume that the only critical value of~$\pi:M \to S^1$ is~$p_0$ and that $\pi^{-1}(p) \simeq S^{m-1}$ for every $p \in S^1$ different from~$p_0$.
We can further assume that $\pi:M \to S^1$ is piecewise linear by taking fine enough triangulations of~$M$ and~$S^1$, and by applying the simplical approximation theorem.
This does not change the subgroups~$\pi_1(F_p) \leqslant \pi_1(M)$, where $F_p = \pi^{-1}(p)$ and $p \in S^1$.
By construction, $\pi_*(a) = \pi_*(c) = \pi_*(d) = 0$ and $\pi_*(u)$ is a generator of~$\pi_1(S^1)$, where $\pi_*:\pi_1(M) \to \pi_1(S^1)$ is the homomorphism induced by~$\pi$.
%Thus, the homomorphism $\pi_*:\pi_1(M) \to \pi_1(S^1)$ induced by~$\pi$ coincides with the surjective homomorphism $\bar{G} \to \Z$ corresponding to the HNN-extension of~$G$.
Since $X_K \subseteq F_{p_0}$ and $i_*[\pi_1(F_{p_0})]$ is a subgroup of~$\ker \pi_*$, we derive that $i_*[\pi_1(F_{p_0})] = \ker \pi_* = G$.
For all the other fibers~$F_p$ with $p \in S^1$ different from~$p_0$, we have $i_*[\pi_1(F_p)] = 0$.
Thus, the fundamental groups of the fibers of $\pi:M \to P$ have subexponential growth.

\medskip

For $m=5$ (and $k=1)$, we have $\nu(G) < \frac{m-k}{m} = \frac{4}{5}$.
In this case, the manifold~$M$ is orientable and satisfies the fiber collapsing assumption of Theorem~\ref{theo:A}.

\medskip

When $m=4$ (and $k=1$), we have $\nu(G) > \frac{m-k}{m} = \frac{3}{4}$.
\forgotten

\forget
In dimension~$4$, the construction does not go through as the loop~$\gamma_4$ representing $u^{-1}cu dc$ is orientation-reversing.
Still, we can define~$N$ as the connected sum~$\#_{i=1}^4 (S^1 \times S^4)_i$.
Performing spherical surgeries along eight loops~$\gamma_1,\dots,\gamma_8$ representing the eight relations of the presentation~\eqref{eq:presGbar} of~$\bar{G}$ as previously, we obtain an orientable closed $4$-manifold~$M$ and a piecewise linear map~$\pi:M \to S^1$ with $i_*[\pi_1(F_{p_0})] = G$ and $i_*[\pi_1(F_p)] \leqslant G$ for every $p \in S^1$.
In this case ($m=4$ and $k=1$), we have $\nu(G) > \frac{m-k}{m} = \frac{3}{4}$.
Thus, the manifold~$M$ does not satisfy the fiber collapsing assumption of Theorem~\ref{theo:A} with the simplicial map $\pi:M \to S^1$.

We show in this case that the natural method presented in the proof of Theorem~\ref{theo:A} consisting of shrinking the fibers of the map $\pi:M \to S^1$ does not readily apply, even though the images of the fundamental groups of the fibers of~$\pi:M \to S^1$ have subexponential growth.
Endow~$M$ with a piecewise flat metric where all simplices of~$M$ are isometric to the standard Euclidean simplex with the same dimension.
Since the subexponential growth rate of \mbox{$i_*[\pi_1(F_{p_0})] = G$} is greater than~$\frac{3}{4}$, the number of homotopy classes of~$F_{p_0}$ which can be represented by a loop based at a vertex~$\star \in F_{p_0}$ of length at most~$T$ is at least $C \, \exp(T^{\frac{3}{4}})$ for some positive constant~$C$.
Fix a loop~$\beta$ of~$M$ based at~$\star$ representing~$u \in \pi_1(M) = \bar{G}$ and denote by~$\ell$ its length.
Modify the metric on~$M$ by shrinking the length of the fibers of~$\pi:M \to S^1$ by a factor~$t$ as in the proof of Theorem~\ref{theo:A} and denote by~$g_t$ the metric so-obtained.
Note that $\vol(M,g_t) \geq C' \, t$ for some positive constant~$C'$.
Observe also that the number of homotopy classes of the fiber~$F_{p_0}$ which can be represented by loops based at~$\star$ of $g_t$-length at most~$T$ is at least $C \, \exp \left( ( \frac{T}{t} )^{\frac{3}{4}} \right)$. 
Fix $T=n(\ell +1)$.
Consider all the loops $\gamma = \alpha_1 \beta \cdots \alpha_n \beta$ of~$M$ based at~$\star$ of $g_t$-length at most~$T$ which can be constructed by taking a loop~$\alpha_1$ in~$F_{p_0}$ of $g_t$-length at most~$1$, followed by the loop~$\beta$ of $g_t$-length~$\ell$, then by a loop~$\alpha_2$ in~$F_{p_0}$ of $g_t$-length at most~$1$, then by~$\beta$ and so on.
We obtain at least $C^n \, \exp\left( \frac{n}{t^{\frac{3}{4}}} \right) = C^{\frac{T}{\ell +1}} \, \exp \left( \frac{T}{(\ell +1) t^{\frac{3}{4}}} \right)$ homotopy classes.
Thus, $\ent(M,g_t) \gtrsim \frac{1}{t^{\frac{3}{4}}}$.
Hence, $\ent(M,g_t)^4 \, \vol(M,g_t) \gtrsim \frac{1}{t^2}$ which tends to infinity as $t$ goes to zero. 

\begin{remark}
Note that the orientable closed $4$-manifold~$M$ constructed above satisfies the fiber collapsing assumption of Theorem~\ref{theo:A} with~$k=2$.
Indeed, one can show that there exists a map $M \to K$ from~$M$ to a $2$-dimensional polyhedron~$K$ which induces a $\pi_1$-isomorphism whose fundamental groups of the fibers have a trivial image in~$\pi_1(M)$.
Therefore, the minimal volume entropy of~$M$ vanishes.
\end{remark}

???\\forget
\begin{remark}
The group~$\bar{G}$ is amenable; see~\cite{grigorchuk2}.
Thus, by Gromov's vanishing simplicial volume theorem, see Theorem~\ref{theo:vanishing}, every manifold with fundamental group~$\bar{G}$ has zero simplicial volume.
\end{remark}

\begin{remark}
It is not clear whether $M$ is essential.
This seems to be a delicate question.
Indeed, the Grigorchuk group~$G$ is a $2$-group.
Therefore, all its homology groups are torsion groups.
Since $\bar{G}$ is an HNN-extension of~$G$, the same holds for the homology groups of~$\bar{G}$ of dimension at least~$2$.
Studying the homology classes with coefficient in~$\Z_2$ of this group is even more complicated.
At any rate, we can modify our construction to obtain an essential manifold~$M'$ by taking the connected sum of~$M$ with a nilmanifold, say~$\T^m$.
In this case, we collapse~$M' = \T^m \# M$ to the graph~$P=[0,1] \cup_{\{1\} = p_1} S^1$ so that the preimage of~$p_1 \neq p_0$ is the attaching sphere of the connected sum, the torus~$\T^m$ is sent to~$[0,1]$ and the term~$M$ is sent to~$S^1$ as before.
\end{remark}

\begin{remark}
To ensure that our manifolds are essential, we can modify our construction by taking the connected sum of~$M$ with a nilmanifold, say~$\T^m$, which yields an essential manifold~$M'$.
In this case, we collapse~$M'= \T^m \# M$ to the graph~$P=[0,1] \cup_{\{1\} = p_1} S^1$ so that the preimage of~$p_1 \neq p_0$ is the attaching sphere of the connected sum, the torus~$\T^m$ is sent to~$[0,1]$ and the term~$M$ is sent to~$S^1$ as before.
\end{remark}
???\\forgotten

???\\forget
\begin{example}
Finitely generated groups with intermediate growth (\ie, groups with subexponential but non-polynomial growth) have first been constructed by R.~Grigorchuk; see~\cite{grigorchuk}.
Though it is still an open question whether finitely presented groups with intermediate growth exist or not, we will consider a finitely presented group~$G$ of subexponential growth with, say, $N_G(T) \gtrsim \exp(\sqrt{T})$, where $N_G(T)$ is the number of elements of~$G$ with word length at most~$T$ with respect to a fixed finite generating set of~$G$.
(Several finitely generated groups with various growth have been constructed since the first Grigorchuk group.)
In this case, the subexponential growth rate of~$G$ is at least~$\frac{1}{2}$.
Let $N$ be a closed triangulated $m$-manifold with fundamental group~$G$, for some $m \geq 3$.
Endow $N$ with a piecewise flat metric where all simplices of~$N$ are isometric to the standard Euclidean simplex with the same dimension.
The number of homotopy classes of~$N$ which can be represented by a loop of length at most~$T$ is at least $C \, \exp(\sqrt{T})$ for some positive constant~$C$.
Consider the closed $m$-manifold $M = N \# (S^1 \times S^{m-1})$ obtained by attaching a handle~$S^1 \times S^{m-1}$ to~$N$.
There exists a simplicial map $\pi:M \to P=S^1 \times D^{m-2}$ to a simplicial $(m-1)$-complex~$P$, which shrinks~$N$ to a vertex~$\star \in P$ and collapses the $S^{m-1}$-factor of the handle to the $(m-2)$-dimensional disk~$D^{m-2}$ so that most of the fibers of the simplicial map~$\pi:M \to P$ are $1$-dimensional.
By construction, the fundamental groups of the fibers of~$\pi:M \to P$ have subexponential growth.
Modify the metric on~$M$ by shrinking the length of the fibers of~$\pi:M \to P$ by a factor~$t$ as in the proof of Theorem~\ref{theo:A}. and denote by~$g_t$ the metric so-obtained.
Note that $\vol(M,g_t) \geq C' \, t$ for some positive constant~$C'$.
Observe also that the number of homotopy classes of the fiber~$F_\star = \pi^{-1}(\star)$ which can be represented by loops of $g_t$-length at most~$T$ is at least $C \, \exp \left( \sqrt{\frac{T}{t}} \right)$. 
Fix $T=2n$.
Consider all the loops $\gamma = \alpha_1 \beta \cdots \alpha_n \beta$ of~$M$ of $g_t$-length at most~$T$ which can be constructed by taking a loop~$\alpha_1$ in~$F_\star$ of $g_t$-length~$1$, followed by a loop~$\beta$ of $g_t$-length~$1$ going around the handle~$S^1 \times S^{m-1}$, then by a loop~$\alpha_2$ in~$F_\star$ of $g_t$-length~$1$, then by~$\beta$ and so on.
We obtain at least $C^n \, \exp\left( \frac{n}{\sqrt{t}} \right) = C^{\frac{T}{2}} \, \exp \left( \frac{T}{2 \sqrt{t}} \right)$ homotopy classes.
Thus, $\ent(M,g_t) \gtrsim \frac{1}{\sqrt{t}}$.
Hence, $\ent(M,g_t)^m \, \vol(M,g_t) \gtrsim \frac{1}{t^{\frac{m}{2}-1}}$ which tends to infinity as $t$ goes to zero.
\end{example}
\forgotten

%%%%%%%%%%%%%%%%%%%%%%%%%%%%%%%%%%%%%%%%%%%%%%%%%%%%%%%%%%%%%%%%%%%%%%%%%%%%%%%%%%%%%%%%%%%%%%%%%%%%%%%%%%%%%%%%%%%%%%%%%%%%

\subsection{Fiber collapsing assumption and zero simplicial volume} \label{sec:Gromov}

\mbox { }

\medskip

Drawing a parallel with the simplicial volume through Gromov's vanishing simplicial volume theorem, we show that a manifold satisfying the fiber collapsing assumption has zero simplicial volume.

\begin{definition} \label{def:amenable}
A group~$G$ is \emph{amenable} if it admits a finitely-additive left-invariant probability measure.
A path-connected open subset~$U$ of a path-connected topological space~$X$ is \emph{amenable in~$X$} if $i_*[\pi_1(U)]$ is an amenable subgroup of~$\pi_1(X)$, where \mbox{$i:U \hookrightarrow X$} is the inclusion map.
\end{definition}

Gromov's vanishing simplicial volume theorem can be stated as follows.

\begin{theorem}[\cite{gro82}, see also~\cite{Ivanov80}] \label{theo:vanishing}
Let $M$ be a connected closed $m$-manifold.
Suppose that $M$ admits a covering by amenable open subsets of multiplicity at most~$m$.
Then %$M$ has zero simplicial volume.
\[ 
\Vert M \Vert_{\D} =0.
\]

In particular, the simplicial volume of a connected closed manifold with amenable fundamental group is zero.
\end{theorem}

The characterization of the fiber collapsing assumption in terms of coverings allows us to derive the following result about the effect of the fiber collapsing assumption on the simplicial volume.
Note that, contrarily to Theorem~\ref{theo:A}, there is no hypothesis about how the subexponential growth rate compares to the dimensions.

\begin{proposition} \label{prop:0simplicial}
Every closed $m$-manifold~$M$ satisfying the fiber collapsing assumption has zero simplicial volume. %, that is,
%\[
%||M||_\Delta =0.
%\]
\end{proposition}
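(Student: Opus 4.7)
The plan is to reduce the statement to Gromov's vanishing simplicial volume theorem (Theorem~\ref{theo:vanishing}) via the cover characterization of the collapsing assumption given in Proposition~\ref{prop:cover}. The key observation is that subexponential growth is a much stronger property than amenability: any finitely generated group of subexponential growth is amenable, by an application of the F\o lner criterion to balls $B_S(t)$ in the Cayley graph. Indeed, if $|B_S(t)|$ grows subexponentially, then for any finite generating set $S$ and any $\varepsilon>0$ one can find $t$ large enough so that $|B_S(t+1)|/|B_S(t)| < 1+\varepsilon$, which forces $B_S(t)$ to be a F\o lner set for the generating set $S$ (and hence witnesses amenability).

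First, I would invoke Proposition~\ref{prop:cover} to translate the collapsing assumption: since $M$ satisfies the collapsing assumption onto a simplicial $k$-complex $P$ with $k<m$, there exists an open cover $\{U_i\}$ of $M$ of multiplicity $k+1 \leq m$ such that each $U_i$ has subexponential $\pi_1$-growth in~$M$. By definition, this means that every subgroup $i_*[\pi_1(U_i)] \leqslant \pi_1(M)$ has subexponential growth as an abstract finitely generated group. Next, using the fact that subexponential growth implies amenability (subgroups of subexponentially growing groups inherit the growth bound via their generators, and amenability follows from the F\o lner property of balls), each $i_*[\pi_1(U_i)]$ is an amenable subgroup of $\pi_1(M)$. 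Hence each~$U_i$ is amenable in~$M$ in the sense of the above definition.

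At this point we have produced a cover of~$M$ by amenable open subsets of multiplicity at most~$m$. Applying Gromov's vanishing simplicial volume theorem, Theorem~\ref{theo:vanishing}, we immediately conclude that
\[
\Vert M \Vert_\Delta = 0,
\]
which is the desired statement. The main (and really only) conceptual step is the passage from subexponential growth to amenability, which is a standard classical fact; everything else is a direct citation of Proposition~\ref{prop:cover} and Theorem~\ref{theo:vanishing}.
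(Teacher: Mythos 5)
Your proof is correct and follows exactly the same route as the paper: invoke Proposition~\ref{prop:cover} to get a multiplicity-$\leq m$ open cover with subexponential $\pi_1$-growth, observe that subexponential growth implies amenability (the paper cites Adel'son-Vel'ski\u{\i}--Shre\u{\i}der rather than rederiving the F\o lner argument), and conclude via Theorem~\ref{theo:vanishing}. No gap; the only cosmetic difference is that you sketch the subexponential-implies-amenable step in-line rather than citing it.
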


\begin{proof}
Recall that every finitely generated group with subexponential growth is amenable; see~\cite{AS57} or~\cite[Theorem~6.11.12]{CSC} for instance.
Thus, every open subset $U \subseteq M$ with subexponential $\pi_1$-growth in~$M$, see Definition~\ref{def:subexp}, is amenable in~$M$.
By Proposition~\ref{prop:cover}, the manifold~$M$ admits a covering of multiplicity at most~$m$ by open subsets of subexponential $\pi_1$-growth in~$M$, and so by amenable open subsets.
It follows from Theorem~\ref{theo:vanishing} that $M$ has zero simplicial volume.
\end{proof}

\forget
\begin{remark} \label{rem:amenable}
Recall that every finitely generated group with subexponential growth is amenable; see~\cite{AS57} or~\cite[Theorem~6.11.12]{CSC} for instance.
Thus, every open subset $U \subseteq X$ with subexponential $\pi_1$-growth in~$X$, see Definition~\ref{def:subexp}, is amenable in~$X$.
This observation allows us to present an alternate proof of Corollary~\ref{coro:zerovol} which asserts that connected closed $m$-manifolds satisfying the fiber collapsing assumption have zero simplicial volume.
Indeed, by Proposition~\ref{prop:cover}, such a manifold~$M$ admits a covering of multiplicity at most~$m$ by open subsets of subexponential $\pi_1$-growth in~$M$, and so by amenable open subsets.
It follows from Theorem~\ref{theo:vanishing} that $M$ has zero simplicial volume.
\end{remark}
\forgotten

\forget
\begin{question}
Though every finitely generated group with subexponential growth is amenable, the converse is false, even among finitely presented groups.
For instance, the following finitely presented group
\[
G = \langle a,t \mid a^{t^2} = a^2, [[[a,t^{-1}],a],a]=1 \rangle
\]
is amenable and has exponential growth; see~\cite{BV05}.
(Here, $x^y=y^{-1}xy$ is the conjugate of~$x$ under~$y$.)
According to Theorem~\ref{theo:vanishing}, every closed manifold with fundamental group isomorphic to~$G$ has zero simplicial volume.
Is the minimal volume entropy of a simplicial complex with fundamental group isomorphic to~$G$ always zero?
\end{question}
\forgotten

\forget
%%%%%%%%%%%%%%%%%%%%%%%%%%%%%%%%%%%%%%%%%%%%%%%%%%%%%%%%%%%%%%%%%%%%%%%%%%%%%%%%%%%%%%%%%%%%%%%%%%%%%%%%%%%%%%%%%%%%%%%%%%%%
%\subsection{Fiber collapsing assumption and zero simplicial volume} \label{sec:zerovol}
%%%%%%%%%%%%%%%%%%%%%%%%%%%%%%%%%%%%%%%%%%%%%%%%%%%%%%%%%%%%%%%%%%%%%%%%%%%%%%%%%%%%%%%%%%%%%%%%%%%%%%%%%%%%%%%%%%%%%%%%%%%%

\mbox { }

\medskip

Corollary~\ref{coro:zerovol} claims that closed manifolds satisfying the fiber collapsing assumption have zero simplicial volume.
We give an alternate proof of this result which does not rely on Theorem~\ref{theo:A}.

%%%%%%%%%%%%%%%%%%%%%%%%%%%%%%%%%%%%%%%%%%%%%%%%%%%%%%%%%%%%%%%%%%%%%%%%%%%%%%%%%%%%%%%%%%%%%%%%%%%%%%%%%%%%%%%%%%%%%%%%%%%%
\begin{definition}
A group~$G$ is \emph{amenable} if it admits a finitely-additive left-invariant probability measure.
A connected open subset~$U$ of a manifold~$M$ is \emph{amenable} if $i_*[\pi_1(U)]$ is an amenable subgroup of~$\pi_1(M)$, where \mbox{$i:U \hookrightarrow M$} is the inclusion map.
\end{definition}
%%%%%%%%%%%%%%%%%%%%%%%%%%%%%%%%%%%%%%%%%%%%%%%%%%%%%%%%%%%%%%%%%%%%%%%%%%%%%%%%%%%%%%%%%%%%%%%%%%%%%%%%%%%%%%%%%%%%%%%%%%%%

Gromov's vanishing simplicial volume theorem can be stated as follows.

\begin{theorem}[\cite{gro82}, see also~\cite{Ivanov80}] \label{theo:vanishing}
Let $M$ be a closed $m$-manifold.
Suppose that $M$ admits a covering by amenable open subset of multiplicity at most~$m$.
Then 
\[ 
\Vert M \Vert_{\D} =0.
\]
In particular, the simplicial volume of a closed manifold with amenable fundamental group is zero.
\end{theorem}

We can now proceed with an alternate proof that the fiber collapsing assumption implies the vanishing of the simplicial volume; see Corollary~\ref{coro:zerovol}.

%%%%%%%%%%%%%%%%%%%%%%%%%%%%%%%%%%%%%%%%%%%%%%%%%%%%%%%%%%%%%%%%%%%%%%%%%%%%%%%%%%%%%%%%%%%%%%%%%%%%%%%%%%%%%%%%%%%%%%%%%%%%
\begin{proof}[Second proof of Corollary~\ref{coro:zerovol}]
Consider a simplicial map $\pi:M \to P$ onto a finite simplicial $(m-1)$-complex~$P$ as in the fiber collapsing assumption.
Every point $p \in P$ has a small open neighborhood~$B_p \subseteq P$ whose preimage~$U_p=\pi^{-1}(B_p)$ is homotopy equivalent to the fiber $F_p=\pi^{-1}(p)$.
By assumption, it follows that the subgroups~$\Gamma_p=i_*[\pi_1(U_p)] \leqslant \pi_1(M)$ have subexponential growth and so are amenable.

Since $P$ is a finite simplicial complex of dimension~$m-1$, 
%we can extract a covering of multiplicity at most~$m$ from the covering~$\{U_p\}$ of~$M$ (give more detail or a %reference???).
we can choose the covering of open stars of vertices of $P$. 
This covering is of multiplicity $m$ and for each vertex $p \in P$, the corresponding subset $\{U_p\}$ is homotopy equivalent to $\pi^{-1}(p)$.
Since the open subsets~$U_p$ are amenable, it follows from Gromov's vanishing simplicial volume theorem, see Theorem~\ref{theo:vanishing}, that the simplicial volume of~$M$ is zero.
\end{proof}
%%%%%%%%%%%%%%%%%%%%%%%%%%%%%%%%%%%%%%%%%%%%%%%%%%%%%%%%%%%%%%%%%%%%%%%%%%%%%%%%%%%%%%%%%%%%%%%%%%%%%%%%%%%%%%%%%%%%%%%%%%%%

\begin{proof}[Second proof]
Theorem~\ref{theo:zerovol}  is also a direct consequence of Theorem~\ref{theo:zerovol} showing that $\omega(M)=0$ and the inequality~\eqref{eq:gro} providing an upper bound on $||M||_\Delta$ in terms of~$\omega(M)$.
%\[
%||M||_\Delta \leq C_m \, \omega(M)
%\]
%established in~\cite{gro82}.
\end{proof}
\forgotten

\subsection{Collapsing with Ricci curvature bounded below}

\mbox{ }

\medskip

In this section, we show that the collapsing of manifolds with Ricci curvature bounded below is connected to the fiber collapsing assumption.

\medskip

\forget
Let us introduce the following notion studied in~\cite{CC97}.

\begin{definition} \label{def:collapsingRicci}
A sequence of closed Riemannian $m$-manifolds~$M_k$ \emph{collapses with Ricci curvature bounded below} if 
\begin{itemize}
\item $\Ric(M_k) \geq -(m-1)$;
\item the sequence~$M_k$ converges to a compact metric space~$X$ for the Gromov-Hausdorff topology;
\item $\lim_{k \to \infty} \vol(M_k) =0$ or equivalently $\dim_H X \leq m-1$, where $\dim_H X$ is the Hausdorff dimension of~$X$.
\end{itemize}
See~\cite{CC97} for a description of the structure of the limit space~$X$.
\end{definition}
\forgotten

Recall the following result of V. Kapovitch and B. Wilking. %, which has not been published yet.

\begin{theorem}[Generalized Margulis Lemma, see~\cite{KW} and also~\cite{courtois}] \label{theo:KW}
For every positive integer~$m$, there exist two constants $\varepsilon_m \in (0,1)$ and~$C_m>0$ such that for every complete Riemannian $m$-manifold~$M$ with $\Ric_M \geq -(m-1)$, the image of the natural homomorphism
\begin{equation} \label{eq:KW}
\pi_1(B(x,\varepsilon_m)) \to \pi_1(B(x,1))
\end{equation}
induced by the inclusion contains a nilpotent subgroup of index at most~$C_m$.

In particular, the image of~\eqref{eq:KW} is virtually nilpotent and so has polynomial growth.
\end{theorem}

As an application of this theorem, Vitali Kapovitch pointed out to us that collapsing with Ricci curvature bounded below (studied by Cheeger and Colding in~\cite{CC97}) implies the fiber collapsing assumption.
More precisely, we have the following result.
%this theorem would imply the following result.
%we have the following result.

\begin{proposition} \label{prop:Ricci} %[Pending on~\cite{KW}] 
For every positive integer~$m$, there exists $v_m>0$ such that every closed Riemannian $m$-manifold~$M$ with $\Ric_M \geq -(m-1)$ and $\vol(M) \leq v_m$ satisfies the fiber collapsing assumption with polynomial growth rate.

In this case, the manifold~$M$ has zero minimal volume entropy.
\end{proposition}

\begin{proof}
Let $\varepsilon_m \in (0,1)$ be the constant in the Generalized Margulis Lemma; see Theorem~\ref{theo:KW}.
By the nerve construction of~\cite[\S3.4]{gro82}, if every ball of radius~$\frac{\varepsilon_m}{4}$ in~$M$ has volume at most~$v_m$ with $v_m>0$ small enough (in particular, if $\vol(M) \leq v_m$) then there exists a continuous map $f:M \to P$ to a finite simplicial complex~$P$ of dimension at most~$m-1$ such that for every $p \in P$, the fiber~$f^{-1}(p)$ lies in some ball of radius~$\varepsilon_m$ in~$M$; see~\cite[Corollary, p.~52]{gro82}.
By the last statement of Theorem~\ref{theo:KW}, the subgroup~$i_*[\pi_1(F_p)] \leqslant \pi_1(M)$, where $i:F_p \hookrightarrow M$ is the inclusion map of a connected component~$F_p$ of~$f^{-1}(p)$, has polynomial growth (recall that a subgroup or a quotient of a virtually nilpotent group is virtually nilpotent).
Thus, the manifold~$M$ satisfies the fiber collapsing assumption with polynomial growth rate.
By Corollary~\ref{coro:polynomial}, it follows that $M$ has zero minimal volume entropy.
\end{proof}

\begin{remark}
This is a refinement of Gromov's isolation theorem~\cite[\S0.5]{gro82} which asserts that every manifold~$M$ in Proposition~\ref{prop:Ricci} has zero simplicial volume.
\end{remark}

\forget
\begin{theorem} \label{theo:Ricci}
Let $M_k$ be a sequence of closed Riemannian $m$-manifolds collapsing with Ricci curvature bounded below.
Then the manifolds~$M_k$ satisfy the fiber collapsing assumption for $k$ large enough.
\end{theorem}

\begin{proof}
By definition, the sequence~$M_k$ converges to a compact metric space~$X$ for the Gromov-Hausdorff topology.
That is, there exists a metric space~$\mathcal{X}$ containing an isometric copy of the spaces~$M_k$ and of the limit space~$X$ such that the sequence~$M_k$ converges to~$X$ for the Hausdorff distance in~$\mathcal{X}$.
Since the topological dimension of a metric space is bounded by its Hausdorff dimension, see~\cite[VII.4 and V.8]{dim}, the limit metric space~$X$ admits a covering of multiplicity at most~$m$ by open subsets~$U_i$ of arbitrarily small diameter $d<\frac{\varepsilon_m}{2}$, where $\varepsilon_m$ is the constant in the Generalized Margulis Lemma; see Theorem~\ref{theo:KW}.
Every open subset $U_i \subseteq X$ extends to an open subset $\mathcal{U}_i \subseteq \mathcal{X}$ containing~$U_i$ as follows
\[
\mathcal{U}_i = \bigcup_{x \in U_i} B_{\mathcal{X}}(x,r_i(x))
\]
where $r_i(x)=\frac{1}{2} d_X(x,X \setminus U_i) \leq \frac{1}{2} \diam(U_i)$.

Now, observe that $\mathcal{U}_i$ and~$\mathcal{U}_j$ intersect each other in~$\mathcal{X}$ if and only if $U_i$ and~$U_j$ intersect each other in~$X$.
Indeed, by contradiction, suppose that $U_i$ and~$U_j$ are disjoint and that $\mathcal{U}_i$ and~$\mathcal{U}_j$ intersect at~$y \in \mathcal{X}$.
By definition, there exist $x_i \in U_i$ and~$x_j \in U_j$ such that 
\[
d_{\mathcal{X}}(y,x_i) < \frac{1}{2} d_X(x_i,X \setminus U_i) \mbox{ and } d_{\mathcal{X}}(y,x_j) < \frac{1}{2} d_X(x_j,X \setminus U_j).
\]
Note that $d_X(x_i,X \setminus U_i) \leq d_X(x_i,x_j)$ and $d_X(x_j,X \setminus U_j) \leq d_X(x_i,x_j)$ since $x_j \in X \setminus U_i$ and $x_i \in X \setminus U_j$.
Thus,
\[
d_{\mathcal{X}}(x_i,x_j) \leq d_{\mathcal{X}}(x_i,y) + d_{\mathcal{X}}(y,x_j) < d_X(x_i,x_j).
\]
Hence a contradiction since $X$ is isometrically embedded into~$\mathcal{X}$.

Thus, the coverings~$(U_i)$ and~$(\mathcal{U}_i)$ have the same multiplicity bounded by~$m-1$.
By construction, the diameter of every open set~$\mathcal{U}_i$ is at most~$2 \diam(U_i) < \varepsilon_m$, and so is the diameter of every open subset~$\mathcal{U}_i \cap M_k \subseteq M_k$ (recall that $M_k$ is isometrically embedded into~$\mathcal{X}$).
We deduce from the last statement of Theorem~\ref{theo:KW} that every open subset~$\mathcal{U}_i \cap M_k$ has polynomial growth in~$M_k$.
Thus, the open subsets~$(\mathcal{U}_i \cap M_k)_i$ form a covering of~$M_k$ of multiplicity~$m-1$ by open subsets of polynomial growth in~$M_k$.
That is, the manifolds~$M_k$ satisfy the fiber collapsing assumption for $k$ large enough; see Proposition~\ref{prop:cover}.
\end{proof}
\forgotten

\forget
The following corollary was also communicated to us by Vitali Kapovitch.

\begin{corollary}
A closed aspherical manifold~$M$ whose fundamental group is non-elementary word hyperbolic cannot collapse with Ricci curvature bounded below.
\end{corollary}

\begin{proof}
We argue by contradiction.
Consider a sequence of Riemannian metrics~$g_k$ on~$M$ such that $M_k=(M,g_k)$ collapses with Ricci curvature bounded below.
By Theorem~\ref{theo:Ricci}, the manifold~$M_k$ satisfies the fiber collapsing assumption for $k$ large enough.
On the other hand, by the last statement of Proposition~\ref{prop:hyp}, the manifold~$M$ satisfies the fiber non-collapsing assumption.
Hence a contradiction.
\end{proof}
\forgotten

%%%%%%%%%%%%%%%%%%%%%%%%%%%%%%%%%%%%%%%%%%%%%%%%%%%%%%%%%%%%%%%%%%%%%%%%%%%%%%%%%%%%%%%%%%%%%%%%%%%%%%%%%%%%%%%%%%%%%%%%%%%%
\section{Simplicial complexes with positive minimal volume entropy}
%%%%%%%%%%%%%%%%%%%%%%%%%%%%%%%%%%%%%%%%%%%%%%%%%%%%%%%%%%%%%%%%%%%%%%%%%%%%%%%%%%%%%%%%%%%%%%%%%%%%%%%%%%%%%%%%%%%%%%%%%%%%

In this section, we introduce the covering non-collapsing assumption and show that it is equivalent to the fiber growth non-collapsing assumption when the fundamental group is thick,
Then, relying on the notion of Urysohn width, we show that the minimal volume entropy of simplicial complexes satisfying the covering non-collapsing assumption and some mild combinatorial conditions is positive.
We also establish a similar result for simplicial complexes satisfying the more manageable fiber growth non-collapsing assumption, without the combinatorial conditions, when the fundamental group is thick.
Finally, we construct simplicial complexes with zero simplicial volume and arbitrarily large minimal volume entropy.

%This topological condition and the one presented in the previous section almost give a characterization of finite simplicial complexes with positive (or vanishing) minimal volume entropy in the sense that we do not know whether there exists a finite simplicial complex which does not satisfy either one.

%%%%%%%%%%%%%%%%%%%%%%%%%%%%%%%%%%%%%%%%%%%%%%%%%%%%%%%%%%%%%%%%%%%%%%%%%%%%%%%%%%%%%%%%%%%%%%%%%%%%%%%%%%%%%%%%%%%%%%%%%%%%
\subsection{Covering non-collapsing assumption} \label{sec:positivevol}
%%%%%%%%%%%%%%%%%%%%%%%%%%%%%%%%%%%%%%%%%%%%%%%%%%%%%%%%%%%%%%%%%%%%%%%%%%%%%%%%%%%%%%%%%%%%%%%%%%%%%%%%%%%%%%%%%%%%%%%%%%%%

\mbox { }

\medskip

%Let us compare the non-collapsing assumptions in terms of coverings and of fiber growth.
%Let us give a characterization of the  non-collapsing assumption in terms of open coverings in the same vein as in the collapsing by coverings case; see Section~\ref{sec:zerovol}. 

%\medskip

As in Section~\ref{sec:zerovol}, we begin with some definitions.

\begin{definition} \label{def:exp.cover}
A covering~$\mathcal{U} = \{U_i \}$ of a path-connected topological space~$X$ by path-connected open subsets has \emph{uniform exponential $\pi_1$-growth} at least~$h$ if for at least one open subset~$U$ of~$\mathcal{U}$, the subgroup $\Gamma_{U}:=i_*[\pi_1(U)]$ of~$\pi_1(X)$ has uniform exponential growth at least~$h$, where \mbox{$i:U \hookrightarrow X$} is the inclusion map.
\end{definition}
%%%%%%%%%%%%%%%%%%%%%%%%%%%%%%%%%%%%%%%%%%%%%%%%%%%%%%%%%%%%%%%%%%
%%%%%%%%%%%%%%%%%%%%%%%%%%%%%%%%%%%%%%%%%%%%%%%%%%%%%%%%%%%%%%%%%%

\textbf{Covering non-collapsing assumption (CNCA).}
Let $X$ be a finite connected simplicial $m$-complex.
Suppose that every finite open covering of~$X$ of multiplicity at most~$m$ has uniform exponential $\pi_1$-growth at least~$h$, for some $h=h(X) >0$ depending only on~$X$ (and not on the open covering).

%\begin{definition} \label{def:exp.cover.non-collaps}
%A connected finite simplicial $m$-complex~$X$ satisfies the \emph{covering non-collapsing assumption} if every finite open covering of~$X$ of multiplicity at most~$m$ has uniform exponential $\pi_1$-growth at least~$h$, for some $h=h(X) >0$ depending only on~$X$ (and not on the open covering).
%\end{definition}

\medskip

%%%%%%%%%%%%%%%%%%%%%%%%%%%%%%%%%%%%%%%%%%%%%%%%%%%%%%%%%%%%%%%%%%
%%%%%%%%%%%%%%%%%%%%%%%%%%%%%%%%%%%%%%%%%%%%%%%%%%%%%%%%%%%%%%%%%%

\forget
{\color{red}(compare with \cite{BC})}

\begin{definition} \label{def:condition.U}
A finitely generated group~$G$ has \emph{Property~${\bf U}$} if
\begin{enumerate}
\item there exists a constant $\underline{h}=\underline{h}(G)>0$ such that every finitely generated subgroup~$H \leqslant G$ of exponential growth has uniform exponential growth at least~$\underline{h}$, \ie, $\ent(H) \geq \underline{h}$; \label{U1}
\item every finitely generated subgroup~$H \leqslant G$ of subexponential growth has zero subexponential growth rate, \ie, $\nu(G)=0$.
\end{enumerate}
By extension, a finite simplicial complex~$X$ has Property~${\bf U}$ if its fundamental group~$\pi_1(X)$ has Property~${\bf U}$.
\end{definition}
\forgotten

%%%%%%%%%%%%%%%%%%%%%%%%%%%%%%%%%%%%%%%%%%%%%%%%%%%%%%%%%%%%%%%%%%%%%%%%%%%%%%%%%%%%%%%%%%%%%%%%%%%%%%%%%%%%%%%%%%%%%%%%%%%%%%%%%%%%
%begin{definition} \label{def:exp.cover.1}

%{\color{red}
%If in addition for any open $W subset X$ such that $\Gamma_{W}$ is of uniform exponential 
%growth at least~$h$ every $U_i \supset W$ has also uniform exponential growth at least~$h$
%we say that $\mathcal{U}$ has \emph{hereditarily  uniform exponential $\pi_1$-growth} at least~$h$. } 
%\end{definition}

%%%%%%%%%%%%%%%%%%%%%%%%%%%%%%%%%%%%%%%%%%%%%%%%%%%%%%%%%%%%%%%%%%%%%%%%%%%%%%%%%%%%%%%%%%%%%%%%%%%%%%%%%%%%%%%%%%%%%%%%%%%%%%

Contrarily to the collapsing case, see Proposition~\ref{prop:cover}, the equivalence between the various non-collapsing assumptions holds only for thick groups.

%Note however that we do not have a full equivalence between the covering and fiber viewpoints in this case, only under Property~${\bf U}$.

%The proof of the following result is analogous to the proof of the corresponding proof in Section~\ref{sec:zerovol} and is left out.
%%%%%%%%%%%%%%%%%%%%%%%%%%%%%%%%%%%%%%%%%%%%%%%%%%%%%%%%%%%%%%%%%%%%%%%%%%%%%%%%%%%%%%%%%%%%%%%%%%%%%%%%%%%%%%%%%%%%%%%%%%%%%%

\begin{proposition} \label{prop:exp.cover}
Let $X$ be a connected finite simplicial $m$-complex.
\begin{enumerate}
\item If $X$ satisfies the covering non-collapsing assumption with constant~$h$ then $X$ satisfies the fiber non-collapsing assumption with the same constant~$h$. \label{ec1}
\item Suppose that $\pi_1(X)$ is $\delta$-thick.
If $X$ satisfies the fiber non-collapsing assumption then $X$ satisfies the covering non-collapsing assumption with constant~$\delta$. \label{ec2}
\end{enumerate}
\end{proposition}
%%%%%%%%%%%%%%%%%%%%%%%%%%%%%%%%%%%%%%%%%%%%%%%%%%%%%%%%%%%%%%%%%%%%%%%%%%%%%%%%%%%%%%%%%%%%%%%%%%%%%%%%%%%%%%%%%%%%%%%%%%%%%%
\begin{proof}
We argue as in the proof of Proposition~\ref{prop:cover}.

Let $\pi:X \to P$ be a simplicial map onto a simplicial complex~$P$ of dimension~$k < m$.
By Proposition~\ref{prop:connected}, we can assume that the fibers of~$\pi:X \to P$ are connected.
Since $P$ is a finite simplicial complex of dimension~$k$, the covering of~$P$ formed of the open stars ${\rm st}(p) \subseteq P$ of the vertices~$p$ of~$P$ has multiplicity~$k+1$.
The preimages $\pi^{-1}({\rm st}(p)) \subseteq X$ of these open stars form an open covering~$\mathcal{U}$ of~$X$ with the same multiplicity $k+1 \leq m$ as the previous covering of~$P$.
Since $X$ satisfies the covering non-collapsing assumption, there exists an open subset~$U_0$ of~$\mathcal{U}$ such that the subgroup~$\Gamma_{U_0} \leqslant \pi_1(X)$ has uniform exponential growth at least~$h$.
By construction of~$\mathcal{U}$, the open subset~$U_0$ strongly deformation retracts onto a fiber~$F_{p_0}=\pi^{-1}(p_0)$.
It follows that the subgroup~$\Gamma_{p_0} = i_*[\pi_1(F_{p_0})]$ is isomorphic to~$\Gamma_{U_0}$ and has also uniform exponential growth at least~$h$.
This proves the point~\eqref{ec1}.
%Therefore, simplicial complex~$X$ satisfies the fiber non-collapsing assumption with the same constant~$h$.

\medskip

Let $\mathcal{U}=\{U_i\}$ be a finite open covering of~$X$ of multiplicity at most~$m$.
Consider a simplicial map $\pi:X \to P$ onto the nerve~$P$ of the covering~$\mathcal{U}$ constructed from a partition of unity subordinate to~$\mathcal{U}$ as in the proof of Proposition~\ref{prop:cover}.
By construction, the normalized barycentric coordinates $\pi_i:X \to [0,1]$ have their support in~$U_i$.
In particular, every fiber~$F_p=\pi^{-1}(p)$ over a point~$p \in P$ lies in some open subset~$U_i$.
Since $X$ satisfies the fiber non-collapsing assumption, there exists a fiber~$F_{p_0}$, contained in some open subset~$U_{i_0}$, such that the subgroup~$\Gamma_{p_0}$ has (uniform) exponential growth.
Since $F_{p_0} \subseteq U_{i_0}$, we have $\Gamma_{p_0} \leqslant \Gamma_{U_{i_0}}$ and the subgroup $\Gamma_{U_{i_0}} \leqslant \pi_1(X)$ has also exponential growth.
Since $\pi_1(X)$ is $\delta$-thick, it follows that $\Gamma_{U_{i_0}}$ has uniform exponential growth at least~$\delta$.
This proves the point~\eqref{ec2}.
%Therefore, $X$ satisfies the covering non-collapsing assumption with constant~$\underline{h}$.
\forget
{\color{red}
1Let $P$ be a finite simplicial complexe of dimension $k<m$. Consider a map $f : X \longrightarrow P$ 
which can be supposed simplicial. Denote $\{p_i\}$ the set of vertices of $P$ 
and let $\mathcal{V} = \{V_i \}$ be the covering of $P$ by open stars of the vertices. Put $U_i = f^{-1}(V_i)$
so we have an open covering $\mathcal{U} = \{U_i \} $ of $X$ of multiplicity $k+1 \leq m$. 
If $X$ satisfies  the covering non-collapsing assumption the existe $i_0$ such that the sub-group 
$\Gamma_{U_{i_0}}$ has uniforme exponential grouwth at least $h$. The construction of $\mathcal{U}$
implies that $U_i$ are homotopy equivalent to $f^{-1}(p_i)$ for all $i$. So we see that the subgroup
$\Gamma_{f^{-1}(p_i)}$ has uniform exponential $\pi_1$-growth at least~$h$ as well.
This implies that $X$ satisfies the fiber non-collapsing assumption with the same constant $h$.

2. Suppose now that $\pi_1(X)$ satisfies the property ${\bf U}$ with some constant $l$ 
and $X$ complies with the covering non-collapsing assumption. Consider some finite covering
$\mathcal{U} = \{U_i \}$  of $X$ of mutiplicity less or equal than $m$. Let $P = \mathcal{N}(\mathcal{U})$
be the nerve of $\mathcal{U}$ and $\Phi : X \longrightarrow P$ be some classifying map constructed
by a partition of unity subordinate to  $\mathcal{U}$. 

Denote $\{p_i\}$ the vertices of $P$ wthich are in one-to-one correspondence with $\{U_i \}$. 
As $\dim(P) < m$ and $X$ satisfies the fiber non-collapsing assumption there exists a vertex $p_{i_0}$
such that $\Gamma_{f^{-1}(p_{i_0})}$ has exponential grouwth. Because 
$f^{-1}(p_{i_0}) \subset U_{i_0}$ and then  $\Gamma_{f^{-1}(p_{i_0})} <  \Gamma_{U_{i_0}}$
the last group, $\Gamma_{U_{i_0}}$, has exponential grouth as well. The ${\bf U}$-hypothesis implies now
that $\Gamma_{U_{i_0}}$ has uniforme exponential grouwth at least $l$, so the result.
}
\forgotten
\end{proof}

\begin{remark}
If $\pi_1(X)$ is $\delta$-thick, the notions of non-collapsing in terms of open coverings (CNCA) and of fiber growth (FNCA) are equivalent.
Furthermore, the constant~$h$ in the definitions of the non-collapsing assumptions satisfies $h \geq \delta$, but a priori, this inequality can be strict.
\end{remark}

%%%%%%%%%%%%%%%%%%%%%%%%%%%%%%%%%%%%%%%%%%%%%%%%%%%%%%%%%%%%%%%%%%%%%%%%%%%%%%%%%%%%%%%%%%%%%%%%%%%%%%%%%%%%%%%%%%%%%%%%%%%%

The collapsing and non-collapsing assumptions, whether in terms of open coverings or fiber growth, are not complementary in general.
However, they are complementary for simplicial complexes with thick fundamental groups; compare with~\cite[Lemma~3.8]{BC}.

\begin{proposition} \label{prop:alternative}
Let $X$ be a connected finite simplicial $m$-complex with thick fundamental group.
Then $X$ satisfies either the covering collapsing assumption, or the covering non-collapsing assumption.

Similarly, $X$ satisfies either the fiber collapsing assumption, or the fiber non-collapsing assumption.
\end{proposition}

\begin{proof}
Suppose that $X$ does not satisfy the covering collapsing assumption.
Let $\mathcal{U}$ be an open covering of~$X$ of multiplicity at most~$m$.
There is a subset~$U$ of~$\mathcal{U}$ such that the subgroup~$\Gamma_U := i_*[\pi_1(U)]$ has exponential growth.
Since $\pi_1(X)$ is thick, the subgroup~$\Gamma_{U}$ has uniform exponential growth.
Therefore, $X$ satisfies  the covering non-collapsing assumption.

For the second statement, either we argue similarly, or we use the fact that FCA $\Leftrightarrow$ CCA and FNCA $\Leftrightarrow$ CNCA when $\pi_1(X)$ is thick.
\end{proof}

\subsection{Examples of thick groups and non-collapsing simplicial complexes} \label{sec:exthick}

\mbox { }

\medskip

Let us give some examples of $\delta$-thick groups:

\begin{enumerate}
\item $G$ is a group whose $2$-generated subgroups are free, with $\delta=\log(3)$. 
Examples of such groups can be found in~\cite{guba}, \cite{bumagin} and~\cite{AO96}.
Generically, all finitely presented groups satisfy this property; see~\cite{AO96}.
\item $G$ is a torsion-free non-elementary word hyperbolic group with $\delta=\delta(G)$ depending on~$G$; see~\cite{delzant}.
%In particular, the fundamental group of a locally ${\rm CAT}(-1)$ finite simplicial complex is thick.
\item $G$ is a discrete subgroup of the isometry group of an $m$-dimensional Cartan-Hadamard manifold of pinched sectional curvature $-a^2 \leq K \leq -1$, with $\delta=\delta(m,a)$ depending only on~$m$ and~$a$; see~\cite{BCG11}. 
More generally, $G$ is a discrete subgroup of the isometry group of a geodesic Gromov hyperbolic space with bounded geometry; see~\cite{BCGS} and~\cite{BF}.
%\item $G$ has exponential growth (\ie, non virtually abelian in this case) and acts freely or properly on a $2$-dimensional ${\rm CAT}(0)$ cube complex, with $\delta=\frac{1}{10} \log(2)$; see~\cite{KS19} and~\cite{GJN}.
\item $G$ has exponential growth (\ie, non virtually abelian in this case) and acts freely on a ${\rm CAT}(0)$ cube complex of dimension two or three, with $\delta>0$ depending only on the dimension (\eg, $\delta=\frac{1}{10} \log(2)$ in the $2$-dimensional case); see~\cite{KS19} and~\cite{GJN}.
%) or on a $m$-dimensional ${\rm CAT}(0)$ cube complexe with isolated flats that admits a geometric group action with $\delta=\delta_m$ (see \cite{GJN}).
\item $G$ has exponential growth (\ie, non virtually abelian in this case) and acts freely on a $\CAT(0)$ cube $m$-complex with isolated flats or freely and weakly properly discontinuously on a Gromov hyperbolic $\CAT(0)$ cube $m$-complex, with $\delta=\delta_m$ depending only on~$m$; see~\cite{GJN}. \label{IF}
%\item $G$ has exponential growth (\ie, non virtually infinite-cyclic in this case) and acts freely and weakly properly discontinuously on a Gromov hyperbolic $\CAT(0)$ cube $m$-complex, with $\delta=\delta_m$ depending only on~$m$; see~\cite{GJN}.
%
%Generalizations to groups acting %on product of $2$-dimensional ${\rm CAT}(0)$ cube complexes and to groups acting 
%on $m$-dimensional ${\rm CAT}(0)$ cube complexes with isolated flats that admits a geometric group action can be found in~\cite{GJN} with other results.
\item $G$ is a triangle-free Artin group or the Higman group, with $\delta=\sqrt[600]{2}$; see~\cite{GJN}.
\item $G$ is the mapping class group of a compact orientable surface~$S$, with $\delta=\delta_S$ depending on~$S$; see~\cite{man10}.
\end{enumerate}
Of course, any subgroup with exponential growth of a $\delta$-thick group is $\delta$-thick.
%BCGS??? + Hierarchically hyperbolic groups and uniform exponential growth + Button : Non proper HNN extensions and uniform uniform exponential growth,
%Generalizations to groups acting on product of $2$-dimensional ${\rm CAT}(0)$ cube complexes and to groups acting on $m$-dimensional ${\rm CAT}(0)$ cube complexes with isolated flats that admits a geometric group action can be found in~\cite{GJN} with other results.???

%Further concrete examples constructed as fundamental groups of graphs of groups can be found in~\cite{BC}. 

\medskip

\forget

An additional example is given by the following lemma which follows from~\cite{delzant}.

\begin{proposition} \label{prop:thick}
Every torsion-free non elementary hyperbolic group~$G$ is thick.
\end{proposition}

\begin{proof}
By~\cite{delzant}, there exists an integer~$n_G$ such that for every $x,y \in G$ which do not commute and for every $n \geq n_G$, the subgroup~$\langle x^n,y^n \rangle \leqslant G$ generated by~$x^n$ and~$y^n$ is free of rank~$2$.

Let $H \leqslant G$ be a subgroup generated by a finite set~$S$ of~$G$.
Assume that $H$ has exponential growth.
Take $x,y \in S$ which do not commute.
(Such pair of elements exists otherwise the subgroup~$H$ would be abelian.)
We have $B_{S_0}\left(\frac{t}{n_G}\right) \subseteq B_S(t)$, where $S_0= \{ x^{\pm n_G}, y^{\pm n_G} \}$.
Here, $B_S(t)$ is formed of the elements of~$H$ at distance at most~$t$ from the identity element with respect to the word distance induced by~$S$.
Taking the exponential growth rate of these balls, we obtain $\ent(H,S) \geq \frac{1}{n_G} \log(3)$, where $\log(3)$ is the exponential growth rate of the free group of rank~$2$ generated by~$S_0$ with respect to the word distance induced by~$S_0$.
This implies that the subgroup~$H$ has uniform exponential growth at least~$h(G) = \frac{1}{n_G} \log(3)$.
Hence the result.
\end{proof}

\forgotten

The following result provides examples of simplicial complexes satisfying the covering/fiber non-collapsing assumption.

\begin{proposition} \label{prop:hyp}
Let $X$ be a finite aspherical simplicial $m$-complex with $H_m(X;\R)$ nontrivial, where $m \geq 2$. 
%whose $m$-th cohomology group with real coefficients~$H^m(X;\R)$ is nonzero, where $m \geq 2$.
Suppose the fundamental group of~$X$ is a non-elementary word hyperbolic group.
Then $X$ satisfies the covering non-collapsing assumption (and thus the fiber non-collapsing assumption).

In particular, every closed orientable aspherical manifold whose fundamental group is a non-elementary word hyperbolic group satisfies the covering non-collapsing assumption (and thus the fiber non-collapsing assumption).
%$m$-manifold~$M$ admitting a Riemannian metric with negative curvature satisfies the fiber non-collapsing assumption.
\end{proposition}

\begin{proof}
First observe that since $X$ is aspherical, its fundamental group~$\pi_1(X)$ is torsion-free, otherwise there would exist a finite-dimensional aspherical space with a finite fundamental group, which is impossible; see~\cite[Proposition~2.45]{hatcher}.
%
%Recall the following theorem of~\cite{delzant}.
%Let $G$ be a torsion-free non-elementary hyperbolic group.
%Then there exists an integer~$n_G$ such that for every $x,y \in G$ which do not commute and for every $n \geq n_G$, the subgroup~$\langle x^n,y^n \rangle$ of~$G$ generated by~$x^n$ and~$y^n$ is free of rank~$2$.
%In particular, every finitely presented subgroup~$H \leqslant G$ is either abelian or has uniform exponential growth at least~$h(G) = \frac{1}{n_G} \log(3)$.
%
Suppose $X$ does not satisfy the covering non-collapsing assumption.
Since $\pi_1(X)$ is a thick group, it follows from Proposition~\ref{prop:alternative} that $X$ satisfies the covering collapsing assumption.
That is, there is a covering of~$X$ of multiplicity~$\leq m$ by open subsets of subexponential $\pi_1$-growth.
In particular, the open subsets of this covering are amenable in~$X$; see Definition~\ref{def:amenable}.
%Let $\pi:X \to P$ be a simplicial map to a finite simplicial $(m-1)$-complex~$P$.
%Suppose that all the subgroups $H=i_*[\pi_1(F_p)] \leqslant \pi_1(X)$, where $F_p$ is a connected component of a fiber~$\pi^{-1}(p)$ and $i:F_p \hookrightarrow X$ is the inclusion map, have subexponential growth (and so are amenable).
%That is, the simplicial complex~$X$ satisfies the fiber collapsing assumption.
According to the generalization given by~\cite[Theorem~9.2]{Ivanov80} (also proved via different approaches in~\cite{FM} and~\cite{LS}) of Gromov's vanishing simplicial volume theorem, see~Theorem~\ref{theo:vanishing}, the canonical homomorphism $H_b^m(X;\R) \to H^m(X;\R)$ between bounded cohomology and singular cohomology vanishes.
By~\cite{mineyev}, the canonical homomorphism $H_b^m(X;\R) \to H^m(X;\R)$ is also surjective.
Hence, $H^m(X;\R)$ is trivial, which leads to a contradiction.
Indeed, by assumption, $H_m(X;\R)$ is nontrivial, and by the universal coefficient theorem for cohomology, $H^m(X;\R) = {\rm Hom}(H_m(X;\R),\R)$ is also nontrivial.
Therefore, $X$ satisfies the covering non-collapsing assumption and so the fiber non-collapsing assumption by Proposition~\ref{prop:exp.cover}.
\end{proof}

In connection with Proposition~\ref{prop:0simplicial}, one can ask the following question.

\begin{question}
%In view of Theorem~\ref{theo:B}, one can ask the following question.
Does every closed orientable manifold~$M$ satisfying the fiber non-collapsing assumption have positive simplicial volume?
Otherwise, find examples of closed orientable manifolds with zero simplicial volume satisfying the fiber non-collapsing assumption.
\end{question}

%%%%%%%%%%%%%%%%%%%%%%%%%%%%%%%%%%%%%%%%%%%%%%%%%%%%%%%%%%%%%%%%%%%%%%%%%%%%%%%%%%%%%%%%%%%%%%%%%%%%%%%%%%%%%%%%%%%%%%%%%%%%
\subsection{Urysohn width and volume}
%%%%%%%%%%%%%%%%%%%%%%%%%%%%%%%%%%%%%%%%%%%%%%%%%%%%%%%%%%%%%%%%%%%%%%%%%%%%%%%%%%%%%%%%%%%%%%%%%%%%%%%%%%%%%%%%%%%%%%%%%%%%
\mbox { }

\medskip

Let us go over the notion of Urysohn width in metric geometry; see~\cite{gro88} for further context.

%%%%%%%%%%%%%%%%%%%%%%%%%%%%%%%%%%%%%%%%%%%%%%%%%%%%%%%%%%%%%%%%%%%%%%%%%%%%%%%%%%%%%%%%%%%%%%%%%%%%%%%%%%%%%%%%%%%%%%%%%%%%

\begin{definition} \label{def:UW}
The \emph{Urysohn $q$-width} of a compact metric space~$X$, denoted by~$\UW_q(X)$, is defined as the least real~$w >0$ such that there exists a finite covering~$\mathcal{U}$ of~$X$ of multiplicity at most~$q+1$ by (path-connected) open subsets~$U$ of diameter less than~$w$ in~$X$.
That is,
\[
\UW_q(X) = \inf_{\substack{U \in \, \mathcal{U} \\ m(\mathcal{U}) \leq q+1}} \diam_X(U).
\]
For a simplicial $m$-complex~$X$, we will simply write $\UW(X)$ for~$\UW_{m-1}(X)$.
\forget
The \emph{Urysohn $q$-width} of a compact metric space~$X$, denoted by~$\UW_q(X)$, is defined as the least real~$\delta >0$ such that there is a continuous map $\pi:X \to P$ from~$X$ to a simplicial $q$-complex~$P$, where all the fibers~$\pi^{-1}(p)$ have diameter at most~$\delta$ in~$X$.
That~is,
\begin{equation} \label{eq:width}
\UW_q(X) = \adjustlimits  \inf_{\pi:X \to P} \sup_{p \, \in P} \, \diam_X [\pi^{-1}(p)]
\end{equation}
where $\pi:X \to P$ runs over all continuous map from~$X$ to a simplicial $q$-complex~$P$ and $p$ runs over all points of~$P$.
Note that the simplicial complex~$P$ may vary with~$\pi:X \to P$.
%Denote also by~$\mathcal{H}_m(X)$ the $m$-dimensional Hausdorff measure of~$X$.
For a simplicial $m$-complex~$X$, we will simply write $\UW(X)$ for~$\UW_{m-1}(X)$.
\forgotten
\end{definition}
%%%%%%%%%%%%%%%%%%%%%%%%%%%%%%%%%%%%%%%%%%%%%%%%%%%%%%%%%%%%%%%%%%%%%%%%%%%%%%%%%%%%%%%%%%%%%%%%%%%%%%%%%%%%%%%%%%%%%%%%%%%%

The Urysohn width can also be interpreted in terms of fiber diameter; see~\cite[Lemma~0.8]{Guth17} for instance.

\begin{proposition} \label{prop:UW}
A compact metric space~$X$ has Urysohn $q$-width less than~$w$ if and only if there exists a continuous map $\pi:X \to P$ from~$X$ to a simplicial $q$-complex~$P$, where all the fibers~$\pi^{-1}(p)$ have diameter at most~$w$ in~$X$.
That~is,
\begin{equation} \label{eq:width}
\UW_q(X) = \adjustlimits  \inf_{\pi:X \to P} \sup_{p \, \in P} \, \diam_X [\pi^{-1}(p)]
\end{equation}
where $\pi:X \to P$ runs over all continuous map from~$X$ to a simplicial $q$-complex~$P$ and $p$ runs over all points of~$P$.
Note that the simplicial complex~$P$ may vary with~$\pi:X \to P$.
\end{proposition}

%As stated, the definition of the Urysohn width applies to any metric space.
In the case of simplicial complexes, we can further require extra structural properties on the map~$\pi:X \to P$ in the previous proposition.

\begin{proposition} \label{prop:width}
Let $X$ be a finite simplicial complex with a piecewise Riemannian metric.
Subdividing~$X$ if necessary, we can assume that the maps~$\pi:X \to P$ in the relation~\eqref{eq:width} are surjective and simplicial, and that their fibers are connected.
\end{proposition}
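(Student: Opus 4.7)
The plan is to start from an almost-optimal continuous map $\pi:X\to P$ realizing the Urysohn $q$-width up to an error $\eta>0$, repackage it as an open cover of~$X$ of multiplicity at most $q+1$ by sets of almost equally small diameter in the spirit of Proposition~\ref{prop:UW}, and then run the partition-of-unity/nerve/simplicial-approximation construction used in the proof of Proposition~\ref{prop:cover} to replace $\pi$ by a simplicial map whose fiber diameters remain controlled. Connectedness of the fibers will be dealt with at the end by invoking Proposition~\ref{prop:connected}, which produces a simplicial map into a simplicial complex of the same dimensional bound whose fibers are the connected components of the old ones, and hence have diameter no larger.

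More concretely, I would fix $\eta>0$ and a continuous $\pi:X\to P$ with $\sup_p \diam_X \pi^{-1}(p)<\UW_q(X)+\eta$. Using upper hemi-continuity of the set-valued map $p\mapsto\pi^{-1}(p)$ (a consequence of the continuity of~$\pi$ and the compactness of~$X$), together with a compactness argument on~$P$, I would subdivide~$P$ finely enough so that the open stars $\textrm{st}(v_i)$ of its vertices form a cover of multiplicity at most $q+1$ and the preimages $U_i:=\pi^{-1}(\textrm{st}(v_i))$ all have diameter at most $\UW_q(X)+2\eta$ in~$X$. Choosing a partition of unity $\{\phi_i\}$ subordinate to $\{U_i\}$ produces a continuous map $\Phi:X\to N$ to the nerve~$N$ of the cover, a simplicial complex of dimension at most~$q$. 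Exactly as in the proof of Proposition~\ref{prop:cover}, for every $p$ in the interior of a simplex of~$N$ with vertices $v_{i_0},\dots,v_{i_k}$, the fiber $\Phi^{-1}(p)$ lies in $\bigcap_j U_{i_j}$, and so has diameter at most $\UW_q(X)+2\eta$.

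The main step will be to replace $\Phi$ by a simplicial map without blowing up fiber diameters. After subdividing~$X$ finely enough to produce a subdivision $X'$ whose simplices all have diameter less than some $\delta\ll\eta$, the simplicial approximation theorem will provide a simplicial map $\pi_0:X'\to N$ homotopic to~$\Phi$ such that, for every vertex $w$ of~$X'$, the image $\pi_0(w)$ is a vertex of the carrier of $\Phi(w)$. In particular, $\pi_0(w)=v_i$ forces $\phi_i(w)>0$ and hence $w\in U_i$. Given two points $x_1,x_2$ in a common fiber $\pi_0^{-1}(p)$ with carriers $\tau_1,\tau_2$ in~$X'$, each $\tau_j$ must contain a vertex $w_j$ with $\pi_0(w_j)$ equal to a prescribed vertex $v_i$ of the carrier of~$p$; both $w_1$ and~$w_2$ lie in $U_i$, which yields
\[
d(x_1,x_2)\le d(x_1,w_1)+d(w_1,w_2)+d(w_2,x_2)<2\delta+\UW_q(X)+2\eta.
\]
This bound can be pushed arbitrarily close to $\UW_q(X)$ by taking $\eta$ and~$\delta$ small. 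I expect this fiber-diameter control under simplicial approximation to be the main technical obstacle of the proof; once it is in place, applying Proposition~\ref{prop:connected} to~$\pi_0$ delivers the required simplicial map with connected fibers satisfying the same diameter bound.
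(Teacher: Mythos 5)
Your argument follows essentially the same route as the paper's: pass to an open cover of multiplicity $q+1$ and small diameter (you build it directly from the almost-optimal continuous map, which is exactly the content of Proposition~\ref{prop:UW} that the paper cites), map to the nerve via a partition of unity, invoke simplicial approximation after a fine subdivision of the domain, and finish with Proposition~\ref{prop:connected}. The only difference is cosmetic: the paper observes that the approximating simplicial map has barycentric coordinates supported in the $U_i$, so each fiber lands inside a single $U_i$ and inherits its diameter bound directly, whereas you bound the fiber diameter via vertices of carrying simplices at the cost of an extra $2\delta$ that you then absorb by taking $\delta$ small — both yield the same conclusion.
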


\begin{proof}
Suppose $\UW_q(X) < w$.
By definition, there is a finite open covering~$\mathcal{U} = \{ U_i \}_{i=1,\cdots, s}$ of~$X$ of multiplicity~$q+1$ and diameter less than~$w$.
%Take a partition of unity~$\{ \phi_i \}$ of~$X$, where each function~$\phi_i:X \to [0,1]$ has its support in~$U_i$.
%Consider the map $\Phi:X \to P \subseteq \Delta^{s-1}$  to the nerve~$P$ of the covering~$\{ U_i \}$ defined by
%\[
%\Phi(x) = (\phi_1(x),\cdots,\phi_s(x))
%\]
%in the barycentric coordinates of~$\Delta^{s-1}$ as in the proof of Proposition~\ref{prop:cover}.
Consider the natural map $\Phi:X \to P \subseteq \Delta^{s-1}$ to the nerve~$P$ of~$\mathcal{U}$ given by a partition of unity of the covering.
As in the proof of Proposition~\ref{prop:cover}, subdividing~$X$ and~$P$, we can approximate $\Phi:X \to P$ by a simplicial map $\pi:X \to P$ close to~$\Phi$ for the $C^0$-topology, whose normalized barycentric coordinates $\pi_i:X \to [0,1]$ have their support in~$U_i$; see~\cite[\S2.C]{hatcher}.
Thus, every fiber~$\pi^{-1}(p)$ lies in one of the open sets~$U_i$.
Therefore, $\diam_X [\pi^{-1}(p)] < w$.
As a result, we can assume that the map $\pi:X \to P$ is simplicial in Proposition~\ref{prop:UW}; see~\eqref{eq:width}.
Now, by Proposition~\ref{prop:connected}, we can replace $\pi:X \to P$ with a surjective simplicial map $\bar{\pi}:X \to \bar{P}$ onto a simplicial complex~$\bar{P}$ of dimension at most~$q$, whose fibers are connected and of diameter less than~$w$.
\forget
Suppose $\UW_q(X) < \delta$.
By~???, there is a finite open covering~$\{ U_i \}_{i=1,\cdots, s}$ of~$X$ of multiplicity~$q+1$ and diameter less than~$\delta$.
Recall that the nerve~$P$ of the covering~$\{ U_i \}$ is a simplicial complex with one vertex~$v_i$  for each open set~$U_i$, where $v_{i_0},\cdots,v_{i_k}$ span a $k$-simplex of~$P$ if and only if the intersection~$\cap_{j=1}^k U_{i_j}$ is nonempty.
The nerve~$P$ lies in~$[0,1]^s$, where the vertex~$v_i$ corresponds to the unit vector~$e_i$ of the canonical basis of~$\R^s$.
Consider a partition of unity~$\{ \phi_i \}$ of~$X$, where each function $\phi_i:X \to [0,1]$ has its support in~$U_i$.
By construction, the map $\phi:X \to [0,1]^s$ with coordinate functions~$\phi_i:X \to [0,1]$ has its image in~$P$.
By~\cite[\S2.C]{hatcher}, subdividing~$X$ and~$P$, we can approximate $\phi:X \to P$ by a simplicial map $\pi:X \to P$ close to~$\phi$ for the $C^0$-topology, whose coordinate functions $\pi_i:X \to [0,1]$ have also their support in~$U_i$.
Thus, every fiber~$\pi^{-1}(p)$ lies in one of the open sets~$U_i$.
Therefore, $\diam_X \pi^{-1}(p) < \delta$.
As a result, we can assume that the map $\pi:X \to P$ is simplicial in the definition of the Urysohn width; see~\eqref{eq:width}.
Now, by Proposition~\ref{prop:connected}, we can replace $\pi:X \to P$ with a simplicial map $\bar{\pi}:X \to \bar{P}$ to a simplicial complex~$\bar{P}$ of dimension at most~$q$, whose fibers are connected and of diameter less than~$\delta$.
\forgotten
\end{proof}

We will need the following recent result of Liokumovich-Lishak-Nabutovsky-Rotman~\cite{LLNR}, extending a theorem of L.~Guth~\cite{Guth17}.
The proof of this result was later on simplified by P.~Papasoglu~\cite{Pap}; see also~\cite{Nab}.

%%%%%%%%%%%%%%%%%%%%%%%%%%%%%%%%%%%%%%%%%%%%%%%%%%%%%%%%%%%%%%%%%%%%%%%%%%%%%%%%%%%%%%%%%%%%%%%%%%%%%%%%%%%%%%%%%%%%%%%%%%%%
\begin{theorem}[\cite{Guth17}, \cite{LLNR}, \cite{Pap}, \cite{Nab}] \label{theo:width}
Let $X$ be a finite simplicial $m$-complex with a piecewise Riemannian metric.
Then
\[
\vol(X) \geq C_m \, \UW(X)^m
\]
where $C_m$ is an explicit positive constant depending only on~$m$.

More generally, if for some $R>0$, every ball~$B(R) \subseteq X$ of radius~$R$ has volume at most~$C_m \, R^m$ then
\[
\UW(X) \leq R.
\]
\end{theorem}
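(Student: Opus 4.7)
The plan is to argue the contrapositive form directly. Assume that every ball of radius $R$ in $X$ has volume at most $C_m R^m$ for a sufficiently small dimensional constant~$C_m$; I aim to construct a continuous map from $X$ to a simplicial complex of dimension at most $m-1$ whose fibers have diameter at most $R$. By Proposition~\ref{prop:UW}, this is equivalent to producing a finite open cover of $X$ of multiplicity at most $m$ by sets of diameter at most $R$ (any dimensional multiplicative constant produced along the way can be absorbed into~$C_m$).

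First, I would start from a maximal $R/10$-separated set $\{x_i\}$ in $X$ and consider the cover by balls $B(x_i, \rho_i)$ with radii $\rho_i$ to be chosen in $[R/10, R/2]$. Such a cover has diameter at most $R$ but in general very large multiplicity. The core of the argument, following Papasoglu's simplification of Guth and of Liokumovich-Lishak-Nabutovsky-Rotman, is a recursive cutting procedure that trims the multiplicity down to $m$ while preserving the diameter bound. Whenever more than $m$ balls meet at a common point, one picks an offending ball $B(x_i, \rho_i)$ and, using the coarea inequality applied to the distance function $d(x_i, \cdot)$, selects the radius~$\rho_i$ so that the concentric sphere $\partial B(x_i, \rho_i)$ has $(m-1)$-Hausdorff measure at most $C'_m R^{m-1}$. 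Such a cheap separating sphere exists because the volume hypothesis bounds the average $(m-1)$-area of concentric spheres in $B(x_i, R/2)$ by a dimensional constant times $R^{m-1}$. These separating spheres are then used to split the offending balls into pieces contained in strictly fewer elements of the excessive overlap.

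Iterating this cutting step dimension by dimension, the procedure terminates after finitely many steps with an open cover of $X$ of multiplicity at most $m$ and diameter bounded by a dimensional constant times $R$; absorbing that constant into the choice of~$C_m$ yields $\UW(X) \le R$, and the integrated inequality $\vol(X) \ge C_m \, \UW(X)^m$ then follows by letting $R$ tend to $\UW(X)$ from below. The main obstacle is the combinatorial control of the recursion: one must simultaneously guarantee the existence of cheap separating spheres at every stage, keep the diameter of all pieces bounded by a constant multiple of $R$, and ensure that the multiplicity actually drops to~$m$ upon termination rather than getting stuck at some larger value. This is precisely the content of the works cited in the statement, which carry out this bookkeeping rigorously.
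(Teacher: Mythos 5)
The paper does not prove this theorem; it is stated with attribution to Guth, Liokumovich--Lishak--Nabutovsky--Rotman, Papasoglu and Nabutovsky, so there is no internal proof to compare against. Your sketch does correctly identify the central device of the Papasoglu-style argument, namely using the coarea inequality and the small-ball volume hypothesis to select concentric spheres of controlled $(m-1)$-content, but the mechanism you describe for bringing the multiplicity down to $m$ is not how any of these proofs work and, as stated, would fail. You propose a greedy loop: locate a point covered by more than $m$ balls, pick an ``offending'' ball, cut it along a cheap concentric sphere, and thereby split it into pieces lying in fewer elements of the overlap. Cutting $B(x_i,\rho_i)$ along $\partial B(x_i,\rho_i)$ does not eliminate its overlap with nearby balls --- the inner piece is $B(x_i,\rho_i)$ itself and it still meets the same neighbors --- so the multiplicity at the offending point does not drop and there is no guarantee of termination at $m$. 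Small area of the sphere alone does not make it a useful separating set for the nerve of the cover.

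The actual arguments are a genuine induction on dimension. In Papasoglu's version one fixes a net, chooses for each net-point a cheap concentric sphere by coarea, and then considers the union $Z$ of all these spheres: $Z$ has small $(m-1)$-Hausdorff content, so by the inductive hypothesis its $(m-2)$-width is controlled, yielding a cover of a neighborhood of $Z$ of multiplicity $m-1$ and small diameter. The inner pieces of the balls, being separated by $Z$, can be chosen pairwise disjoint, and combining these two collections gives a cover of multiplicity $m$. The bound $m$ therefore comes from the number of dimensional stages in the recursion, not from iteratively trimming an arbitrarily large initial multiplicity. Guth's original argument uses a different, minimizing-hypersurface-based mechanism with the same dimensional structure, and LLNR's contribution is precisely the passage from Riemannian volume to Hausdorff content needed to cover the simplicial-complex case considered here --- a point your sketch glosses over. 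Since the paper simply cites this result, citation is the appropriate move; if a sketch is wanted, it must make the dimension induction and the role of the union $Z$ of separating spheres explicit, rather than relying on a multiplicity-reduction loop.
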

%%%%%%%%%%%%%%%%%%%%%%%%%%%%%%%%%%%%%%%%%%%%%%%%%%%%%%%%%%%%%%%%%%%%%%%%%%%%%%%%%%%%%%%%%%%%%%%%%%%%%%%%%%%%%%%%%%%%%%%%%%%%

A more general statement involving the lower dimensional widths and the Hausdorff content of balls holds true; see \cite{LLNR}, \cite{Pap}, \cite{Nab}.

%%%%%%%%%%%%%%%%%%%%%%%%%%%%%%%%%%%%%%%%%%%%%%%%%%%%%%%%%%%%%%%%%%%%%%%%%%%%%%%%%%%%%%%%%%%%%%%%%%%%%%%%%%%%%%%%%%%%%%%%%%%%
\subsection{Modified Urysohn width and regular simplicial complexes}
%%%%%%%%%%%%%%%%%%%%%%%%%%%%%%%%%%%%%%%%%%%%%%%%%%%%%%%%%%%%%%%%%%%%%%%%%%%%%%%%%%%%%%%%%%%%%%%%%%%%%%%%%%%%%%%%%%%%%%%%%%%%

\begin{definition} \label{def:UW'}
Let $X$ be a length metric space and~$A \subseteq X$ be a path-connected subset of~$X$.
The \emph{intrinsic distance} between any pair of points of~$A$ is defined as the infimum length of paths of~$A$ between this pair of points.
The \emph{intrinsic diameter} of~$A$, denoted by~$\diam^+(A)$, is the diameter of~$A$ with respect to the intrinsic metric of~$A$.

The \emph{modified Urysohn $q$-width} of~$X$, denoted by~$\UW^+_q(X)$, is defined as the least real $w>0$ such that there exists a finite covering of~$X$ of multiplicity at most~$q+1$ by (path-connected) open subsets of intrinsic diameter less than~$w$ (compare with Definition~\ref{def:UW}).

As previously, for a simplicial $m$-complex~$X$, we will simply write $\UW^+(X)$ for~$\UW^+_{m-1}(X)$.
\end{definition}

Since the intrinsic diameter of an open subset of~$X$ is greater or equal to its extrinsic diameter, we have 
\[
\UW_q(X) \leq \UW^+_q(X).
\]
Let us show that a reverse inequality holds up to a factor two under some combinatorial conditions.

\begin{definition}
Let $X$ be a simplicial complex.
A $k$-simplex $\Delta^k \subseteq X$ is \emph{isolated} if it is not the face of a $(k+1)$-simplex of~$X$.
The simplicial complex~$X$ is \emph{$k$-regular} if its simplices of dimension at most~$k$ are not isolated.
\end{definition}

\begin{proposition} \label{prop:UW+2UW}
Let $X$ be a $2$-regular finite simplicial $m$-complex without locally separating vertices with $m \geq 3$ endowed with a piecewise Riemannian metric.
Then
\[
\UW^+_q(X)  \leq 2 \, \UW_q(X)
\]
for every $q \in \{2,\dots,m-1\}$.
\end{proposition}

\begin{proof}
Fix $\varepsilon >0$.
By Proposition~\ref{prop:width}, subdividing~$X$ if necessary, there exists a surjective simplicial map $\pi:X \to P$ from~$X$ onto a simplicial $q$-complex~$P$ whose fibers are connected and satisfy
\begin{equation} \label{eq:UWq+epsilon}
\diam_X[\pi^{-1}(p)] < \UW_q(X) + \varepsilon
\end{equation}
for every $p \in P$.

Denote by $\Theta(P)$ the triangulation of~$P$ and by~$\Theta^n(P)$ its $n$-th barycentric subdivision (the integer~$n$ will be set later).
Let $\{p_i\}$ be the vertices of~$\Theta^{n-1}(P)$.
The closed stars $\st(p_i) \subseteq P$ of~$p_i$ in the triangulation~$\Theta^n(P)$ form a finite covering of~$P$ of multiplicity~$q+1$.
Note that the points of~$P$ of maximal multiplicity~$q+1$ are exactly the (iso)-barycenters of the $q$-simplices of the triangulation~$\Theta^{n-1}(P)$.

Consider the covering~$\{F_i\}$ of~$X$ by the polyhedral closed subsets 
\[
F_i= \pi^{-1}(\st(p_i)) \subseteq X.
\]
This covering is of multiplicity~$q+1$ and the points of~$X$ of maximal multiplicity~$q+1$ are exactly the points lying in the fibers of the barycenters of the $q$-simplices of~$\Theta^{n-1}(P)$.
Observe that for $n$ large enough, we have
\begin{align*}
\diam_X(F_i) & < \diam_X[\pi^{-1}(p_i)] + \varepsilon \\
 & < \UW_q(X) + 2\varepsilon
\end{align*}
where the second inequality comes from~\eqref{eq:UWq+epsilon}.

Take an $\varepsilon$-dense net~$\{ x^i_j \mid j \in J_i \}$ in each polyhedral subset~$F_i$ with respect to its intrinsic metric.
We can further assume that the points~$x^i_j$ are not vertices of~$X$.
Connect every pair of points~$x^i_j$ and~$x^i_{j'}$ with a length-minimizing geodesic~$\gamma^i_{j,j'}$ of~$X$.
Clearly,
\[
\length(\gamma^i_{j,j'}) \leq \diam_X(F_i) < \UW_q(X) + 2 \varepsilon.
\]
Define
\[
F_i^+ = F_i \bigcup \left( \bigcup_{j \neq j'} \gamma^i_{j,j'} \right)
\]
as the union of~$F_i$ with these geodesics.
By construction, the subsets~$F_i^+$ form a closed covering of~$X$ with intrinsic diameter
\begin{equation} \label{eq:Ui+}
\diam^+(F_i^+) < 2 \, \UW_q(X)+6 \varepsilon.
\end{equation}

Since the vertices of~$X$ are not locally separating, we can slightly move the curves~$\gamma^i_{j,j'}$ without increasing their length too much (keeping the intrinsic diameter bound~\eqref{eq:Ui+}) so that the curves ~$\gamma^i_{j,j'}$ avoid the vertices of~$X$. % (recall that their endpoints~$x_j$ and~$x_{j'}$ are not vertices).
Since the simplices of~$X$ of dimension~$1$ and~$2$ are not isolated, we can also slightly move the curves~$\gamma^i_{j,j'}$ without increasing their length too much so that the curves~$\gamma^i_{j,j'}$ are pairwise disjoint and avoid the fibers over the barycenters of~$\Theta^{n-1}(P)$ corresponding to the points of maximal multiplicity~$q+1$ of the covering~$\{ \st(p_i) \}$.
Note that these fibers are of codimension~$q \geq 2$ in each simplex of~$X$ they intersect.
We can even assume that the curves~$\gamma^i_{j,j'}$ are piecewise linear.
Despite the risk of confusion, we still denote by~$F_i^+$ the union of~$F_i$ with the curves~$\gamma^i_{j,j'}$ thus-modified.

Now, recall that the covering~$\{F_i\}$ is of multiplicity~$q+1$.  
Since the curves~$\gamma^i_{j,j'}$ are disjoint, the only way for the multiplicity of~$\{F_i^+\}$ to be greater than~$q+1$ is if some curve~$\gamma^{i_0}_{j,j'}$ intersects a region of multiplicity~$q+1$ of~$\{ F_i \mid i \neq i_0 \}$.
That is, if $\gamma^{i_0}_{j,j'}$ intersects a region of maximal multiplicity of~$\{F_i\}$, given by the fibers of the barycenters of~$\Theta^{n-1}(P)$.
This is excluded after the previous curve deformation.
Hence, the closed covering~$\{F_i^+\}$ has multiplicity~$q+1$ and satisfies the intrinsic diameter bound~\eqref{eq:Ui+}.

By taking small enough open neighborhoods of the~$F_i^+$, we obtain an open covering of~$X$ with the same properties.
Subdividing~$X$ even further and slightly moving the curves~$\gamma^i_{j,j'}$ if necessary, we can assume that this open covering of~$X$ is given by the open stars of the~$F_i^+$.
This shows that $\UW^+_q(X) \leq 2 \, \UW_q(X) + 6 \varepsilon$.
Hence the proposition.
\end{proof}
%%%%%%%%%%%%%%%%%%%%%%%%%%%%%%%%%%%%%%%%%%%%%%%%%%%%%%%%%%%%%%%%%%%
%%%%%%%%%%%%%%%%%%%%%%%%%%%%%%%%%%%%%%%%%%%%%%%%%%%%%%%%%%%%%%%%%%%

\begin{remark} \label{rem:UW+2UW}
The end of Proposition~\ref{prop:UW+2UW} shows that there is a finite covering of~$X$ of multiplicity at most~$q+1$ by open \emph{simplicial} subsets of intrinsic diameter less than $2 \, \UW_q(X) + 6 \varepsilon$.
\end{remark}

%%%%%%%%%%%%%%%%%%%%%%%%%%%%%%%%%%%%%%%%%%%%%%%%%%%%%%%%%%%%%%%%%%%%%%%%%%%%%%%%%%%%%%%%%%%%%%%%%%%%%%%%%%%%%%%%%%%%%%%%%%%%
\subsection{Diameter and uniform group growth}
%%%%%%%%%%%%%%%%%%%%%%%%%%%%%%%%%%%%%%%%%%%%%%%%%%%%%%%%%%%%%%%%%%%%%%%%%%%%%%%%%%%%%%%%%%%%%%%%%%%%%%%%%%%%%%%%%%%%%%%%%%%%
\mbox { }

\medskip

Let us present the following classical result relating the diameter and the volume entropy of a space, similar in spirit to the \v{S}varc-Milnor lemma; see~\cite[\S5.16]{gro99}.
We refer to Definition~\ref{def:algent} and Definition~\ref{def:entN} for the basic definitions.

\forget
\begin{definition} \label{def:Gamma-i}
Let $U$ be a connected open subset in a connected finite simplicial complex~$X$.
%compact s-lsc length metric space~$X$.
Denote by $\Gamma_U$ the subgroup of~$\pi_1(X)$ given by the image of~$\pi_1(U)$ under the group homomorphism induced by the inclusion map $i:U \hookrightarrow X$.
That is,
\[
\Gamma_U = i_*[\pi_1(U)].
\]
\end{definition}

We have the following classical diameter-entropy bound similar in spirit to the \v{S}varc-Milnor lemma; see~\cite[\S5.16]{gro99}.
\forgotten

%%%%%%%%%%%%%%%%%%%%%%%%%%%%%%%%%%%%%%%%%%%%%%%%%%%%%%%%%%%%%%%%%%%%%%%%%%%%%%%%%%%%%%%%%%%%%%%%%%%%%%%%%%%%%%%%%%%%%%%%%%%%
\begin{proposition}\label{prop:diam-ent}
Let $U$ be a connected open simplicial subset in a connected finite simplicial complex~$X$ with a piecewise Riemannian metric.
%Suppose that the subgroup $\Gamma_U := i_*[\pi_1(U)]$ of~$\pi_1(X)$ defined as the image of~$\pi_1(U)$ under the group homomorphism induced by the inclusion map $i:U \hookrightarrow X$ is finitely generated.
Then
\[
\diam^+(U) \cdot  \ent(X) \geq \frac{1}{2} \, \ent(\Gamma_U)
\]
where $\Gamma_U := i_*[\pi_1(U)]$ is the image of~$\pi_1(U)$ under the group homomorphism induced by the inclusion map $i:U \hookrightarrow X$.
\end{proposition}
%%%%%%%%%%%%%%%%%%%%%%%%%%%%%%%%%%%%%%%%%%%%%%%%%%%%%%%%%%%%%%%%%%%%%%%%%%%%%%%%%%%%%%%%%%%%%%%%%%%%%%%%%%%%%%%%%%%%%%%%%%%%

\begin{proof}
The proof of this result is classical; see~\cite[Proposition~3.22]{gro99} for the details.
Since $U$ is a simplicial subset of a finite simplicial complex, its fundamental group~$\pi_1(U)$ is finitely generated and so is~$\Gamma_U$.
Fix~$\varepsilon >0$.
Take a system of loops of~$U$ with basepoint~$x_0$ whose homotopy classes in~$X$ form a finite generating set of~$\Gamma_U = i_*[\pi_1(U,x_0)] \leqslant \pi_1(X,x_0)$.
Decompose these loops into segments of length less than~$\varepsilon$ and connect the endpoints of these segments to~$x_0$ with almost-minimizing arcs of~$U$.
The triangular loops~$\gamma_ i \subseteq U$ thus-formed induce a finite generating set~$S$ of~$\Gamma_U$ in homotopy with 
\[
\length(\gamma_i) < 2 \, \diam^+(U) + \varepsilon.
\]
Clearly, every homotopy class $\alpha \in \Gamma_U$ can be represented by a loop~$\gamma \subseteq U$ based at~$x_0$ of length at most 
\[
(2 \, \diam^+(U) + \varepsilon) \cdot d_S(e,\alpha)
\]
where $d_S$ is the word distance on~$\Gamma_U$ induced by~$S$.
Thus, the number~$\mathcal{N}(X;T)$ of homotopy classes represented by loops based at~$x_0$ of length at most~$T$, see Definition~\ref{def:entN}, satisfies
\[
\mathcal{N}(X;T) \geq  {\rm card} \left\{ \alpha \in\Gamma_U \mid d_S(e,\alpha) \leq \frac{T}{2 \, \diam^+(U) + \varepsilon} \right\}
\]
It follows from~\eqref{eq:entN} that
\[
\eent(X) \geq \frac{1}{2 \, \diam^+(U) + \varepsilon} \, \ent(\Gamma_U,S)
\]
for every $\varepsilon >0$.
Hence the result.
\end{proof}

%%%%%%%%%%%%%%%%%%%%%%%%%%%%%%%%%%%%%%%%%%%%%%%%%%%%%%%%%%%%%%%%%
%%%%%%%%%%%%%%%%%%%%%%%%%%%%%%%%%%%%%%%%%%%%%%%%%%%%%%%%%%%%%%%%%
%%%%%%%%%%%%%%%%%%%%%%%%%%%%%%%%%%%%%%%%%%%%%%%%%%%%%%%%%%%%%%%%%%%%%%%%%%%%%%%%%%%%%%%%%%%%%%%%%%%%%%%%%%%%%%%%%%%%%%%%%%%%
%%%%%%%%%%%%%%%%%%%%%%%%%%%%%%%%%%%%%%%%%%%%%%%%%%%%%%%%%%%%%%%%%%%%%%%%%%%%%%%%%%%%%%%%%%%%%%%%%%%%%%%%%%%%%%%%%%%%%%%%%%%%
\subsection{Covering non-collapsing assumption and minimal volume entropy}
%%%%%%%%%%%%%%%%%%%%%%%%%%%%%%%%%%%%%%%%%%%%%%%%%%%%%%%%%%%%%%%%%%%%%%%%%%%%%%%%%%%%%%%%%%%%%%%%%%%%%%%%%%%%%%%%%%%%%%%%%%%%
\mbox{ } 

\medskip

We can now prove the following result complementing Corollary~\ref{coro:collapsing} under some mild combinatorial assumptions.

%%%%%%%%%%%%%%%%%%%%%%%%%%%%%%%%%%%%%%%%%%%%%%%%%%%%%%%%%%%%%%%%%%%%%%%%%%%%%%%%%%%%%%%%%%%%%%%%%%%%%%%%%%%%%%%%%%%%%%%%%%%%
\begin{theorem} \label{theo:B.bis}
Every connected finite $2$-regular simplicial $m$-complex~$X$ without locally separating points
and with $m \geq 3$ satisfying the covering non-collapsing assumption has positive minimal volume entropy.

More precisely,
\[
\omega(X) \geq C_m' \, h(X)
\]
where $h(X)$ is the constant in the covering non-collapsing assumption on~$X$ and $C'_m$ is an explicit positive constant depending only on~$m$.
\end{theorem}
%%%%%%%%%%%%%%%%%%%%%%%%%%%%%%%%%%%%%%%%%%%%%%%%%%%%%%%%%%%%%%%%%%%%%%%%%%%%%%%%%%%%%%%%%%%%%%%%%%%%%%%%%%%%%%%%%%%%%%%%%%%%

\begin{proof}
By Proposition~\ref{prop:UW+2UW} and Remark~\ref{rem:UW+2UW}, for every~$\varepsilon >0$, there exists an open simplicial covering~$\mathcal{U}=\{U_i\}$ of~$X$ of multiplicity at most~$m$ with 
\[
\diam^+(U_i) < 2\UW(X) + \varepsilon.
\]
By the covering non-collapsing assumption, there is an open simplicial subset $U_{i_0}$ of~$\mathcal{U}$ such that the subgroup~$\Gamma_{U_{i_0}}=i_*[\pi_1(U_{i_0})]$ has uniform exponential growth at least~$h(X)$.
It follows from Proposition~\ref{prop:diam-ent} that 
\[
\frac{1}{2} h(X) \leq \frac{1}{2} \ent(\Gamma_{i_0}) \leq \diam^+(U_{i_0}) \cdot \ent(X) \leq (2 \UW(X) + \varepsilon) \cdot \ent(X).
\]
Letting $\varepsilon$ go to zero, we obtain
\begin{equation} \label{eq:entUW}
\ent(X) \cdot \UW(X) \geq \frac{1}{4} h(X)
\end{equation}
By Theorem~\ref{theo:width}, this yields
\[
\ent(X) \cdot \vol(X)^{\frac{1}{m}} \geq C'_m \, h(X)
\]
with $C'_m = \frac{1}{4} C_m^{\frac{1}{m}}$.
\end{proof}

\begin{remark} \label{rem:entUW}
If the simplicial complex~$X$ in Theorem~\ref{theo:B.bis} has small enough volume, its minimal volume entropy is bounded away from zero.
This result still holds true if the unit balls of~$X$ (instead of the whole simplicial complex~$X$) have small enough volume.
Indeed, in this case, we have $\UW(X) \leq 1$ by Theorem~\ref{theo:width}, and the lower bound~\eqref{eq:entUW} leads to $\ent(X) \geq \frac{1}{4} h(x)$.
\end{remark}

\begin{remark}
When $\pi_1(X)$ is thick, we can replace the covering non-collapsing assumption in Theorem~\ref{theo:B.bis} with the fiber non-collapsing assumption by Proposition~\ref{prop:exp.cover}.
In this case, we will see in Theorem~\ref{theo:B} that we can drop the extra combinatorial assumptions.
\end{remark}

%%%%%%%%%%%%%%%%%%%%%%%%%%%%%%%%%%%%%%%%%%%%%%%%%%%%%%%%%%%%%%%%%%%%%%%%%%%%%%%%%%%%%%%%%%%%%%%%%%%%%%%%%%%%%%%%%%%%%%%%%%%%%%%%%%%%%%

\subsection{Handling non-regular simplicial complexes} \label{sec:non-regular}

\mbox{ }

\medskip

%{\color{red}
%Proposition \ref{prop:exp.cover} shows that the covering non collapsing assumption appears more general then the fiber non-collapsing assumption (FNCA) and both are equivalent under the condition ${\bf U}$ on the fondamental group. In practice the FNCA is more suitable to apply. Moreover the FNCA makes possible to deal with arbitrary complexes not obviously $2$-regular.}
In this section, we start with a simplicial complex satisfying the FNCA and replace it with a $2$-regular simplicial complex without locally separating vertices preserving the FNCA with the same constant.
Our goal is to drop the extra combinatorial assumptions in Theorem~\ref{theo:B.bis} for simplicial complexes (with a thick fundamental group) satisfying the FNCA; see Theorem~\ref{theo:B}.

\medskip

Recall that a finite connected simplicial $m$-complex~$X$ satisfies the FNCA if there exists $h(X)>0$ such that for every simplicial map $\pi:X \to P$ onto a simplicial complex~$P$ of dimension~$k < m$, there exists a connected component~$F_{p_0}$ of some fiber~$\pi^{-1}(p_0)$ with $p_0 \in P$ such that the finitely generated subgroup~$i_*[\pi_1(F_{p_0})] \leqslant \pi_1(X)$ has uniform exponential growth at least~$h(X)$.

\medskip

Let $X$ be a finite simplicial $m$-complex with~$m \geq 3$.
Define an extension
\begin{equation} \label{eq:hatX}
\widehat{X} = X \bigcup_i \Delta_i^3
\end{equation}
of~$X$ by attaching a $3$-simplex~$\Delta_i^3$ along every isolated edge~$\Delta_i^1$ or triangle~$\Delta_i^2$ of~$X$ so that the resulting simplicial $m$-complex~$\widehat{X}$ is $2$-regular.
Note that the inclusion $X \hookrightarrow \widehat{X}$ is a $\pi_1$-isomorphism.

\medskip

Replacing~$X$ with the $2$-regular simplicial complex~$\widehat{X}$ does not alter the fiber non-collapsing assumption.

\begin{lemma} \label{lem:hat}
Let $X$ be a finite simplicial $m$-complex with~$m \geq 3$.
If $X$ satisfies the FNCA with constant at least~$h$, then $\widehat{X}$ also satisfies the FNCA with constant at least~$h$.
%Then the simplicial complex~$X$ satisfies the FNCA with constant~$h$ if and only if $\widehat{X}$ satisfies the FNCA with the same constant~$h$.
\end{lemma}

\begin{proof}
Let $\widehat{\pi}:\widehat{X} \to P$ be a simplicial map onto a simplicial $q$-complex~$P$ with $q < m$.
Denote by~$\pi:X \to P$ the restriction of~$\widehat{\pi}:\widehat{X} \to P$ to~$X$.
For every vertex $p \in P$, the $\widehat{\pi}$-fiber over~$p$ decomposes as
\[
\widehat{\pi}^{-1}(p) = \pi^{-1}(p) \bigcup_i \left( \widehat{\pi}^{-1}(p) \cap \Delta_i^3 \right)
\]
where $\Delta_i^3$ runs over the $3$-simplices of~$\widehat{X} \setminus X$.
Since the map $\widehat{\pi}:\widehat{X} \to P$ is simplicial, every block $\widehat{\pi}^{-1}(p) \cap \Delta_i^3$ in the previous decomposition is a $k$-face of~$\Delta_i^3$ with $0 \leq k \leq 3$.
If $\widehat{\pi}^{-1}(p) \cap \Delta_i^3$ is disjoint from~$\pi^{-1}(p)$, then $\widehat{\pi}^{-1}(p) \cap \Delta_i^3$ is a contractible connected component of~$\widehat{\pi}^{-1}(p)$.
If $\widehat{\pi}^{-1}(p) \cap \Delta_i^3$ intersects~$\pi^{-1}(p)$ along a vertex, an edge or a triangle, then $\widehat{\pi}^{-1}(p) \cap \Delta_i^3$ deformation retracts onto this vertex, edge or triangle.
Therefore, every connected component~$\widehat{F}_p$ of~$\widehat{\pi}^{-1}(p)$ is either contractible or deformation retracts onto a connected component~$F_p$ of~$\pi^{-1}(p)$.
In the latter case, the subgroups~$i_*[\pi_1(F_p)] \leqslant \pi_1(X)$ and~$i_*[\pi_1(\widehat{F}_p)] \leqslant \pi_1(\widehat{X})$ have the same growth.
Hence the result.
\end{proof}

We can split simplicial complexes at their locally separating vertices as follows.

\begin{definition} \label{def:Xstar}
Let $X$ be a finite simplicial complex.
Denote by~$X^\star$ the finite simplicial complex obtained by locally disconnecting~$X$ at its locally separating vertices.
This construction comes with a natural simplicial map
\begin{equation} \label{eq:j}
j:X^\star \to X
\end{equation}
injective away from the vertices of~$X^\star$ with 
\[
X = X^\star / \! \sim
\]
where $x_1 \sim x_2$ if $j(x_1)=j(x_2)$.
Observe that the map $j:X^\star \to X$ is $\pi_1$-injective on each connected component of~$X^\star$.
\end{definition}

Splitting a simplicial complex at its locally separating vertices does not alter the fiber non-collapsing assumption either.

\begin{lemma} \label{lem:star}
Let $X$ be a finite simplicial $m$-complex with $m \geq 2$.
Denote by~$X^\star$ the finite simplicial $m$-complex obtained by locally disconnecting~$X$ at its locally separating vertices.
If $X$ satisfies the FNCA with constant at least~$h$, then $X^\star$ also satisfies the FNCA with constant at least~$h$.
\end{lemma}

\forget
\begin{proof}
Suppose that $X$ satisfies the FNCA with constant at least~$h$.
Without loss of generality, we can assume that $X$ is connected.

Let $x$ be a separating vertex of~$X$.
We can split~$X$ at~$x$ into two simplicial complexes~$X_1$ and~$X_2$ with basepoints~$x_1 \in X_1$ and~$x_2 \in X_2$ such that
\[
X = X_1 \vee X_2 = (X_1 \sqcup X_2)/ \! \sim
\]
where $x_1 \sim x_2$ are identified with~$x$.
We can further perform this splitting so that $x_1$ is not a locally separating vertex of~$X_1$.
Note that $x_2$ may still be a locally separating vertex of~$X_2$.
By van Kampen's theorem, we have 
\[
\pi_1(X,x) \simeq \pi_1(X_1,x_1) * \pi_1(X_2,x_2).
\]

Let $\mathcal{V}_i = \{ V_{i,j} \mid j \in J_i \}$ be an open covering of~$X_i$ of multiplicity at most~$m$ with $V_{i,j}$ connected.
Switching the indices if necessary, we can assume that $x_1 \in V_{1,1}$ and $x_2 \in V_{2,1}$.
Fix three small enough contractible open balls $B_1 \subseteq B_2 \subseteq B_3 \subseteq X_1$ around~$x_1 \in X_1$ whose closures~$\widebar{B}_i$ are still contractible and lie in ~$V_{1,1}$.

Define $U_{1,j} \subseteq X_1 \subseteq X = X_1 \vee X_2$ with $j \in J_1$ as
\[
U_{1,j} =
\begin{cases}
V_{1,1} \setminus \widebar{B}_1 & \text{if } j=1 \\
V_{1,j} \setminus \widebar{B}_3 & \text{otherwise.}
\end{cases}
\]
Define also $U_{2,j} \subseteq X = X_1 \vee X_2$ with $j \in J_2$ as
\[
U_{2,j} =
\begin{cases}
V_{2,1} \cup B_2 & \text{if } j=1 \\
V_{2,j} \cup B_1 & \text{if } j \neq 1 \text{ and } x_2 \in V_{2,j} \\
V_{2,j}  & \text{otherwise.}
\end{cases}
\]
By construction, the subsets~$U_{i,j}$ are connected and form an open covering~$\mathcal{U}$ of~$X=X_1 \vee X_2$ of multiplicity at most~$m$ with
\[
i_*[\pi_1(U_{i,j})] \simeq i_*[\pi_1(V_{i,j})]
\]
by contractibility of~$\widebar{B}_i$.
Since $X$ satisfies the FNCA with constant at least~$h$, the same goes with the disjoint union~$X_1 \sqcup X_2$.
Repeating this process over and over with the remaining separating vertices, we can assume that $X$ has no separating vertices anymore.

\medskip

Let $x$ be a locally separating vertex of~$X$.
We can split~$X$ at~$x$ into a simplicial $m$-complex~$Y$ with two vertices~$y_1$ and~$y_2$ such that
\[
X = Y/ \! \sim
\]
where $y_1 \sim y_2$ are identified with~$x$.
We can further perform this splitting so that $y_1$ is not a locally separating vertex of~$Y$.
Note that $y_2$ may still be a locally separating vertex of~$Y$.
By van Kampen's theorem, we have
\[
\pi_1(X) \simeq \pi_1(Y) * \Z.
\]

Let $\mathcal{V} = \{ V_j \mid 1 \leq j \leq k \}$ be an open covering of~$Y$ of multiplicity~$m$ with $V_j$ connected.
Switching the indices if necessary, we can assume that $y_1 \in V_1$ and $y_2 \in V_2$.
Fix three small enough contractible open balls $B_1 \subseteq B_2 \subseteq B_3 \subseteq Y$ around~$y_1 \in Y$ whose closures~$\widebar{B}_i$ are still contractible and lie in~$V_1$.

Define $U_i \subseteq X=Y/ \! \sim$ with $1 \leq i \leq k$ as follows.

\noindent If $y_2 \notin V_2$, then 
\[
U_1 = V_1 \setminus \widebar{B}_2, \quad U_2 = V_2 \cup B_3, \quad
U_{i>2} = 
\begin{cases}
V_i \setminus \widebar{B}_3 & \text{ if } y_2 \notin V_i \\
(V_i \setminus \widebar{B}_3) \cup B_1 & \text{ if } y_2 \in V_i.
\end{cases}
\]
If $y_2 \in V_2$, then
\[
U_1 = V_1, \quad
U_{i>1} = 
\begin{cases}
V_i \setminus \widebar{B}_2 & \text{ if } y_2 \notin V_i \\
(V_i \setminus \widebar{B}_2) \cup B_1 & \text{ if } y_2 \in V_i.
\end{cases}
\]
In both cases, the subsets~$U_i$ are connected and form an open covering~$\mathcal{U}$ of~$X=Y/ \! \sim$ of multiplicity at most~$m$ with $i_*[\pi_1(U_0)] = \{ e \}$ and
\[
i_*[\pi_1(U_i)]  \simeq i_*[\pi_1(V_i)]
\]
by contractibility of~$\widebar{B}_i$.
Since $X$ satisfies the FNCA with constant at least~$h$, the same goes with~$Y$.
Repeating this process over and over with the remaining locally separating vertices, we obtain the simplicial complex~$X^\star$, which shows that $X^\star$ satisfies the FNCA with constant at least~$h$.
\end{proof}
\forgotten

\begin{proof}
Suppose that $X$ satisfies the FNCA with constant at least~$h$.
Without loss of generality, we can assume that $X$ is connected.

Let $x$ be a locally separating vertex of~$X$.
We can split~$X$ at~$x$ into $k$ connected simplicial complexes $\{ X_i \mid 1 \leq i \leq k \}$ with $k_i$ non locally separating vertices $\{ x_j^i \mid 1 \leq j \leq k_i \}$ in each~$X_i$ such that
\[
X = (X_1 \sqcup \dots \sqcup X_k)/ \! \sim
\]
where all the vertices~$x_j^i \in X_i$ are identified with~$x$.
By van Kampen's theorem, we have 
\[
\pi_1(X,x) \simeq \Asterisk_{i=1}^ k \left( \pi_1(X_i,x_1^i) * F_{k_i-1} \right)
\]
where $F_r$ is the free group of rank~$r$.

Let $\mathcal{V}_i = \{ V_{i,\alpha} \mid \alpha \in A_i \}$ be an open covering of~$X_i$ of multiplicity at most~$m$ with $V_{i,\alpha}$ connected.
Slightly perturbing the covering if necessary, we can assume that $x_j^i \notin \partial V_{i,\alpha}$ for all the indices.
In particular, we can fix three (small) contractible open metric balls $B^-_{i,j} \subsetneq B_{i,j} \subsetneq B^+_{i,j} \subseteq X_i$ around each vertex~$x_j^i \in X_i$ such that
\begin{enumerate}
\item the closures $\widebar{B}^-_{i,j}$, $\widebar{B}_{i,j}$ and~$\widebar{B}^+_{i,j}$ of these balls are still contractible;
\item the balls~$\widebar{B}^+_{i,j}$ are disjoint;
\item $\widebar{B}^+_{i,j}$ lies in~$V_{i,\alpha}$ if $x_j^i \in V_{i,\alpha}$;
\item $\widebar{B}^+_{i,j}$ is disjoint from~$V_{i,\alpha}$ if $x_j^i \notin V_{i,\alpha}$.
\end{enumerate}

Loosely speaking, for every vertex~$x_j^i$, we choose an open set~$V_{i,\alpha^i_j}$ containing~$x_j^i$ and remove from each open set~$V_{i,\alpha}$ a ball~$\widebar{B}^-_{i,j}$ or~$\widebar{B}^+_{i,j}$ around each vertex~$x_j^i$, where this ball is~$\widebar{B}^-_{i,j}$ if~$V_{i,\alpha}$ is the chosen open set~$V_{i,\alpha_j^i}$ containing~$x_j^i$ and is~$\widebar{B}^+_{i,j}$ otherwise.
Observe that the resulting open sets~$U_{i,\alpha} \subseteq X$ are connected and that removing the contractible balls~$\widebar{B}^-_{i,j}$ or~$\widebar{B}^+_{i,j}$ from the open sets~$V_{i,\alpha}$ does not change the images of their fundamental groups in~$\pi_1(X)$.
In particular, the images of the fundamental groups of~$U_{i,\alpha}$ and~$V_{i,\alpha}$ in~$\pi_1(X)$ are the same.
Now, the multiplicity of the~$U_{i,\alpha}$ is the same as the multiplicity of the~$V_{i,\alpha}$ at every point of~$X$, except in the neighborhood~$\bigcup_{i,j} \widebar{B}^-_{i,j}$ of~$x$, where it is equal to zero, and on the corona $\bigcup_{i,j} \widebar{B}^+_{i,j} \setminus \widebar{B}^-_{i,j}$, where it is equal to one.
To obtain an open covering of~$X$ with the desired properties, we add the contractible open neighborhood~$\bigcup_{i,j} B_{i,j}$ of~$x \in X$.

More formally, for every $1 \leq i \leq k$ and $1 \leq j \leq k_i$, fix $\alpha_j^i \in A_i$ such that $x_j^i \in V_{i,\alpha_j^i}$.
It may happen that $\alpha_j^i=\alpha_{j'}^i$ for $j \neq j'$.
Let 
\[
J_\alpha^i = \{ j \mid \alpha_j^i = \alpha \}.
\]
Define the open sets $U_{i,\alpha} \subseteq X_i \setminus \{ x_j^i \mid 1 \leq j \leq k_i \} \subseteq X$ with $\alpha \in A_i$ as follows:
%\[
%U_{i,\alpha} = 
%\begin{cases}
%\displaystyle V_{i,\alpha} \setminus \bigcup_{j'=1}^{k_i} \widebar{B}^+_{i,j'} & \text{ if } \alpha \neq \alpha_j^i \\
%\displaystyle V_{i,\alpha} \setminus \left( \widebar{B}^-_{i,j} \bigcup_{j' \neq j} \widebar{B}^+_{i,j'} \right)& \text{ if } \alpha = \alpha_j^i.
%\end{cases}
%\]
\[
U_{i,\alpha} = V_{i,\alpha} \setminus \left[ \left( \bigcup_{j \in J_\alpha^i} \widebar{B}^-_{i,j} \right) \bigcup \left( \bigcup_{j \notin J_\alpha^i} \widebar{B}^+_{i,j'} \right) \right].
\]
Define also the open neighborhood~$U_0 \subseteq X$ of~$x$ as
\[
U_0 = \bigcup_{i,j} B_{i,j}.
\]
By construction, the subsets~$U_0$ and~$U_{i,\alpha}$ are connected and form an open covering~$\mathcal{U}$ of~$X$ of multiplicity at most~$m$ with $i_*[\pi_1(U_0)] = \{ e \}$ and
\[
i_*[\pi_1(U_{i,\alpha})]  \simeq i_*[\pi_1(V_{i,\alpha})]
\]
by contractibility of~$\widebar{B}_i$.
Since $X$ satisfies the FNCA with constant at least~$h$, one of the subgroups~$i_*[\pi_1(U_{i_0,\alpha_{i_0}})]$ has uniform exponential growth at least~$h$ and so does~$ i_*[\pi_1(V_{i_0,\alpha_{i_0}})]$.
Thus, the simplicial complex~$X_1 \sqcup \dots \sqcup X_k$ also satisfies the FNCA with constant at least~$h$.

Repeating this process over and over with the remaining locally separating vertices, we obtain the simplicial complex~$X^\star$, which shows that $X^\star$ satisfies the FNCA with constant at least~$h$.
\end{proof}

Splitting a simplicial complex at its locally separating vertices does not increase its volume entropy.

\begin{lemma} \label{lem:star2}
Let $X$ be a finite simplicial $m$-complex with a piecewise Riemannian metric.
Denote by~$X^\star$ the finite simplicial $m$-complex obtained by locally disconnecting~$X$ at its locally separating vertices.
Endow~$X^\star$ with the piecewise Riemannian metric pulled back by the simplicial map $j:X^\star \to X$.
Then every connected component~$Z$ of~$X^\star$ satisfies
\[
\ent(Z) \leq \ent(X).
\]
\end{lemma}

\begin{proof}
By construction, the $\pi_1$-injective map $j:Z \to X$ is $1$-Lipschitz and volume-preserving, and so is its lift $\tilde{j}: \tilde{Z} \to \tilde{X}$ to the universal covers of~$Z$ and~$X$.
%, where the following diagram 
%\begin{center}
%\begin{tikzcd}
%\tilde{j}: \tilde{Z} \arrow[r] \arrow[d] & \tilde{X} \arrow[d] \\
%j: Z \arrow[r]          & X
%\end{tikzcd}
%\end{center}
%is commutative.
%Here, the vertical maps are the  maps.
Therefore,
\[
\tilde{j}(B_{\tilde{Z}}(R)) \subseteq B_{\tilde{X}}(R)
\]
and
\[
\vol \, B_{\tilde{Z}}(R) = \vol \, \tilde{j}(B_{\tilde{Z}}(R)) \leq \vol \, B_{\tilde{X}}(R)
\]
for some $R$-balls~$B_{\tilde{Z}}(R) \subseteq \tilde{Z}$ and~$B_{\tilde{X}}(R) \subseteq \tilde{X}$.
% in~$\tilde{Z}$ and~$\tilde{X}$.
Hence,
\[
\ent(Z) \leq \ent(X).
\]
\end{proof}

%%%%%%%%%%%%%%%%%%%%%%%%%%%%%%%%%%%%%%%%%%%%%%%%%%%%%%%%%%%%%%%%%%%%%%%%%%%%%%%%%%%%%%%%%%%%%%%%%%%%%%%%%%%%%%%%%%%%%%%%%%%%
%%%%%%%%%%%%%%%%%%%%%%%%%%%%%%%%%%%%%%%%%%%%%%%%%%%%%%%%%%%%%%%%%%%%%%%%%%%%%%%%%%%%%%%%%%%%%%%%%%%%%%%%%%%%%%%%%%%%%%%%%%%%
\subsection{Fiber non-collapsing assumption and minimal volume entropy}
%%%%%%%%%%%%%%%%%%%%%%%%%%%%%%%%%%%%%%%%%%%%%%%%%%%%%%%%%%%%%%%%%%%%%%%%%%%%%%%%%%%%%%%%%%%%%%%%%%%%%%%%%%%%%%%%%%%%%%%%%%%%
\mbox{ } 

\medskip

We can now prove the following result complementing Theorem~\ref{theo:A} when the fundamental group is thick.

%%%%%%%%%%%%%%%%%%%%%%%%%%%%%%%%%%%%%%%%%%%%%%%%%%%%%%%%%%%%%%%%%%%%%%%%%%%%%%%%%%%%%%%%%%%%%%%%%%%%%%%%%%%%%%%%%%%%%%%%%%%%
\begin{theorem} \label{theo:B}
Let $X$ be a connected finite simplicial $m$-complex with thick fundamental group and $m \geq 3$.
If $X$ satisfies the fiber non-collapsing assumption, then $X$ has positive minimal volume entropy.

More precisely,
\[
\omega(X) \geq C_m' \, h(X)
\]
where $h(X)$ is the constant in the fiber non-collapsing assumption on~$X$ and $C'_m$ is an explicit positive constant depending only on~$m$.
\end{theorem}
%%%%%%%%%%%%%%%%%%%%%%%%%%%%%%%%%%%%%%%%%%%%%%%%%%%%%%%%%%%%%%%%%%%%%%%%%%%%%%%%%%%%%%%%%%%%%%%%%%%%%%%%%%%%%%%%%%%%%%%%%%%%

%%%%%%%%%%%%%%%%%%%%%%%%%%%%%%%%%%%%%%%%%%%%%%%%%%%%%%%%%%%%%%%%%%%%%%%%%%%%%%%%%%%%%%%%%%%%%%%%%%%%%%%%%%%%%%%%%%%%%%%%%%%%

\begin{proof}
Suppose that $X$ is equipped with a piecewise Riemannian metric.
This metric can be extended into a piecewise Riemannian metric on the $2$-regular simplicial complex~$\widehat{X}$ defined in~\eqref{eq:hatX} so that the inclusion~$X \hookrightarrow \widehat{X}$ is distance preserving with
%\begin{align*}
%\vol(\widehat{X}) & \simeq \vol(X) \\
%\ent(\widehat{X}) & \simeq \ent(X)
%\end{align*}
\begin{equation} \label{eq:ent-estimate1}
\vol(\widehat{X}) \simeq \vol(X) \quad \text{ and } \quad \ent(\widehat{X}) \simeq \ent(X)
\end{equation}
by taking a suitable Riemannian metric on each $3$-simplex~$\Delta_i^3$ in~\eqref{eq:hatX} collapsing to the Riemannian metric of the edge~$\Delta_i^1$ or triangle~$\Delta_i^2$ of~$X$ to which the $3$-simplex~$\Delta_i^3$ is attached.
Here, the symbol~$\simeq$ means that the equality holds up to an arbitrarily small positive constant.
Endow the simplicial $m$-complex~$\widehat{X}^\star$ obtained by locally disconnecting~$\widehat{X}$ at its locally separating vertices with the piecewise Riemannian metric pulled back by the $\pi_1$-injective natural map $j:\widehat{X}^\star \to \widehat{X}$; see Definition~\ref{def:Xstar}.
By Lemma~\ref{lem:star2}, every connected component~$Z$ of~$\widehat{X}^\star$ satisfies
\begin{equation} \label{eq:ent-estimate2}
\vol(Z) \leq \vol(\widehat{X}) \quad \text{ and } \quad \ent(Z) \leq \ent(\widehat{X}).
\end{equation}
%\begin{align*}
%\vol(Z) & \leq \vol(\widehat{X}) \\
%\ent(Z) & \leq \ent(\widehat{X})
%\end{align*}

By Lemma~\ref{lem:hat} and Lemma~\ref{lem:star}, there exists a connected component~$Z_0$ of~$\widehat{X}^\star$ satisfying the fiber non-collapsing assumption with constant at least~$h(X)$.
Observe that the simplicial complex~$Z_0$ is of dimension~$m$, otherwise we would obtain a contradiction by taking for \mbox{$\pi:Z_0 \to P$} the identity map $Z_0 \to Z_0$ in the definition of the fiber non-collapsing assumption.

Now, since the simplicial complex~$\widehat{X}^\star$ is $2$-regular without locally separating vertices, see Section~\ref{sec:non-regular}, its connected component~$Z_0$ is also $2$-regular without locally separating vertices.
It follows from the estimates~\eqref{eq:ent-estimate1} and~\eqref{eq:ent-estimate2}, and Theorem~\ref{theo:B.bis} that
\[
\omega(X) \simeq \omega(\widehat{X}) \geq \omega(Z_0) \geq C_m' \, h(X)
\]
where $C_m'=\frac{1}{4} C_m^{\frac{1}{m}}$.
Hence, the minimal volume of~$X$ is positive.
\forget
By definition of the modified Urysohn width, see Definition~\ref{def:UW'}, for every $\varepsilon >0$, there exists a finite covering~$\mathcal{U} = \{ U_i \}$ of~$X$ of multiplicity at most~$m$ by connected open subsets with
\begin{equation} \label{eq:diam1}
\diam^+ (U_i) < \UW^+(Z_0) + \varepsilon.
\end{equation}
%Now, by Proposition~\ref{prop:width} [MODIFY???], for any~$\varepsilon >0$, there exists a simplicial map $\pi:Z \to P$ to some simplicial $(m-1)$-complex~$P$, where every fiber~$F_p=\pi^{-1}(p)$ is connected, such that 
%\begin{equation} \label{eq:diam1}
%\diam^+ (F_p) < \UW^+(Z) + \varepsilon.
%\end{equation}
%By the fiber non-collapsing assumption still valid on~$Z$, one of the subgroups~$i_*[\pi_1(F_{p_0})] \leqslant \pi_1(Z)$ has uniform exponential growth at least~$h(Z)=h(X)$.
%The point~$p_0 \in P$ has a small enough open neighborhood $B_{p_0} \subseteq  P$ whose preimage $U_{p_0}=\pi^{-1}(B_{p_0})$ is homotopy equivalent to the fiber~$F_{p_0}$ and has intrinsic diameter close to the one of~$F_{p_0}$ so that 
%\begin{equation} \label{eq:diam2}
%\diam^+ (U_{p_0}) < UW^+(Z) + \varepsilon.
%\end{equation}
%The group~$\Gamma_{p_0}=i_*[\pi_1(U_{p_0})]$, which is isomorphic to the finitely generated subgroup~$i_*[\pi_1(F_{p_0})]$, has uniform exponential growth at least~$h(X)$.

By the fiber non-collapsing assumption, which still holds on~$Z_0$ with constant~$h(X)$, one of the subgroups~$\Gamma_{i_0} = i_*[\pi_1(U_{i_0})] \leqslant \pi_1(Z_0)$ has uniform exponential growth at least~$h(X)$.
It follows from Proposition~\ref{prop:diam-ent} and the entropy estimates of~\eqref{eq:ent-estimate1} and~\eqref{eq:ent-estimate2} that 
\begin{equation} \label{eq:chain-diam}
\frac{1}{2} \, h(X) \leq \frac{1}{2} \, \ent(\Gamma_{i_0}) \leq \diam^+(U_{i_0}) \cdot \ent(Z_0) \lesssim \diam^+(U_{i_0}) \cdot \ent(X).
\end{equation}

On the other hand, by construction, the simplicial complex~$\widehat{X}^\star$ is $2$-regular without locally separating vertices, see Section~\ref{sec:non-regular}, and so is its connected component~$Z_0$.
It follows from Proposition~\ref{prop:UW+2UW} that
\[
\UW^+(Z_0) \leq 2 \, \UW(Z_0).
\]

Now, by Theorem~\ref{theo:width} and the volume estimates of~\eqref{eq:ent-estimate1} and~\eqref{eq:ent-estimate2}, we have
\[
\UW(Z_0) \leq C_m'' \, \vol(Z_0)^\frac{1}{m} \lesssim  C_m'' \, \vol(X)^\frac{1}{m}
\]
where $C_m'' = C_m^{-\frac{1}{m}}$.
Therefore, 
\begin{equation} \label{eq:UW2vol}
\UW^+(Z_0) \lesssim  2 \, C_m'' \, \vol(X)^\frac{1}{m}.
\end{equation}

Combining the inequalities~\eqref{eq:chain-diam}, \eqref{eq:diam1} and~\eqref{eq:UW2vol}, we derive
\[
\omega(X) \geq C_m' \, h(X) > 0
\]
where $C_m'=\frac{1}{4 C_m''} = \frac{1}{4} C_m^\frac{1}{m}$.
Hence, the minimal volume entropy of~$X$ is positive.
\forgotten
\end{proof}

\begin{remark} \label{rem:entUW2}
As in Remark~\ref{rem:entUW}, if the unit balls of a simplicial complex~$X$ in Theorem~\ref{theo:B} have small enough volume, the minimal volume entropy of~$X$ is bounded away from zero.
\end{remark}

\begin{remark}
By Proposition~\ref{prop:hyp}, Theorem~\ref{theo:B} applies to finite aspherical simplicial $m$-complexes~$X$ with a non-elementary word hyperbolic fundamental group and $H_m(X;\R)$ nontrivial.
Thus, these simplical complexes~$X$ have positive minimal volume entropy.
This result can also be obtained using filling techniques; see~\cite{minentB} and~\cite{sab17}.
\end{remark}

%%%%%%%%%%%%%%%%%%%%%%%%%%%%%%%%%%%%%%%%%%%%%%%%%%%%%%%%%%%%%%%%%%%%%%%%%%%%%%%%%%%%%%%%%%%%%%%%%%%%%%%%%%%%%%%%%%%%%%%%%%%%

%The following result is the counterpart of Corollary~\ref{coro:collapsing}.

%%%%%%%%%%%%%%%%%%%%%%%%%%%%%%%%%%%%%%%%%%%%%%%%%%%%%%%%%%%%%%
%%%%%%%%%%%%%%%%%%%%%%%%%%%%%%%%%%%%%%%%%%%%%%%%%%%%%%%%%%%%%
%\begin{corollary}\label{coro:non-collapsing}
%Every connected finite simplicial $m$-complex~$X$ with $m \geq 3$ whose open coverings of multiplicity %at most~$m$ have uniform exponential $\pi_1$-growth has positive minimal volume entropy. 
%Let $X$ be a connected finite simplicial $m$-complex such that every open covering of~$X$ of multiplicity %at most~$m$ has uniform exponential $\pi_1$-growth at least $h = h(X)$.
%Then $X$ has positive minimal volume entropy. 
%\end{corollary}
%%%%%%%%%%%%%%%%%%%%%%%%%%%%%%%%%%%%%%%%%%%%%%%%%%%%%%%%%%%%%%%%%%%%%%%%%%%%%%%%%%%%%%%%%%%%%%%%%%%%%%%%%%%%%%%%%%%%%%%%%%%%

\forget
%%%%%%%%%%%%%%%%%%%%%%%%%%%%%%%%%%%%%%%%%%%%%%%%%%%%%%%%%%%%%%%%%%%%%%%%%%%%%%%%%%%%%%%%%%%%%%%%%%%%%%%%%%%%%%%%%%%%%%%%%%%%
\begin{theorem} \label{theo:B}
Every connected finite simplicial $m$-complex~$X$ satisfying the fiber non-collapsing assumption has positive minimal volume entropy.
More generally (by contraposition), given a piecewise Riemannian metric~$g$ on~$X$, if for some $R>0$ every ball~$B(R) \subseteq X$ of radius~$R$ has volume at most~$C_m \, R^m$ then
\[
\ent(X,g) \geq \frac{h(X)}{2R}
\]
where $C_m$ is an explicit positive constant depending only on~$m$.
\end{theorem}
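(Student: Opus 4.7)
The plan is to first establish the more general (contrapositive) statement, from which the main assertion follows by a rescaling argument. So fix a piecewise Riemannian metric~$g$ on~$X$ and assume every ball $B(R) \subseteq X$ of radius~$R$ satisfies $\vol(B(R)) \leq C_m R^m$, where $C_m$ is the constant coming from Theorem~\ref{theo:width}.

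First I would invoke the contrapositive form of Theorem~\ref{theo:width} to obtain $\UW(X) \leq R$, and then use Proposition~\ref{prop:width} to upgrade this to the existence, for every $\varepsilon > 0$, of a simplicial map $\pi:X \to P$ onto a simplicial complex~$P$ of dimension at most $m-1$ with connected fibers of $g$-diameter at most~$R+\varepsilon$. The fiber $\pi_1$-growth non-collapsing assumption then furnishes a vertex $p_0 \in P$ whose connected fiber $F_{p_0}=\pi^{-1}(p_0)$ satisfies $\ent(\Gamma_{F_{p_0}}) \geq h(X)$, where $\Gamma_{F_{p_0}} := i_*[\pi_1(F_{p_0})]$.

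Next I would pick a small open neighborhood~$U$ of~$F_{p_0}$ in~$X$ that strongly deformation retracts onto~$F_{p_0}$ (for instance, the open regular neighborhood of~$F_{p_0}$ in a sufficiently fine iterated barycentric subdivision of~$X$). By construction $\Gamma_U = \Gamma_{F_{p_0}}$, and $\diam_X(U) \leq \diam_X(F_{p_0}) + \varepsilon \leq R + 2\varepsilon$. Applying Proposition~\ref{prop:diam-ent} to~$U$ yields
\[
(R + 2\varepsilon) \cdot \ent(X,g) \geq \diam_X(U) \cdot \ent(X,g) \geq \tfrac{1}{2}\,\ent(\Gamma_U) \geq \tfrac{h(X)}{2},
\]
and letting $\varepsilon \to 0$ gives the announced bound $\ent(X,g) \geq h(X)/(2R)$. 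To derive the main statement, for an arbitrary metric~$g$ on~$X$ I would choose $R := (\vol(X,g)/C_m)^{1/m}$, so that every ball of~$X$ has volume at most $\vol(X,g) = C_m R^m$, trivially verifying the hypothesis. The contrapositive then gives $\ent(X,g)\cdot\vol(X,g)^{1/m} \geq h(X)\,C_m^{1/m}/2$, independently of~$g$, yielding $\omega(X) \geq h(X)\,C_m^{1/m}/2 > 0$.

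The main hurdle I anticipate is the construction of the neighborhood~$U$: one needs $\Gamma_U = \Gamma_{F_{p_0}}$ (so that the entropy estimate provided by the non-collapsing assumption transfers to~$U$) together with $\diam_X(U)$ arbitrarily close to $\diam_X(F_{p_0}) \leq R + \varepsilon$. For a general piecewise Riemannian metric this requires combining the PL regular-neighborhood construction with a choice of simplicial subdivision adapted to the metric, but is a routine technical adjustment. Every other step — the width estimate, the reduction to a simplicial map with small-diameter connected fibers, the \v{S}varc--Milnor-type diameter-entropy inequality, and the rescaling passage from the ball hypothesis to a uniform lower bound on $\omega(X)$ — is quoted directly from a result already proved in the excerpt.
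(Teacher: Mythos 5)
Your proposal is correct and follows essentially the same route as the paper's proof: both combine Theorem~\ref{theo:width} and Proposition~\ref{prop:width} to get a simplicial map to an $(m-1)$-complex with connected, small-diameter fibers, then invoke the non-collapsing assumption to find a fiber whose $\pi_1$-image has uniform exponential growth at least $h(X)$, enlarge to an open neighborhood carrying the same subgroup, and conclude via Proposition~\ref{prop:diam-ent}. The only cosmetic difference is the order of presentation (you prove the contrapositive first and deduce the bound on $\omega(X)$ by the rescaling trick $R = (\vol(X,g)/C_m)^{1/m}$, while the paper goes the other way), and that the paper takes $U$ to be the preimage under $\pi$ of a small neighborhood of $p_0$ in $P$ rather than a regular neighborhood of $F_{p_0}$ in $X$; both give the same conclusion.
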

%%%%%%%%%%%%%%%%%%%%%%%%%%%%%%%%%%%%%%%%%%%%%%%%%%%%%%%%%%%%%%%%%%%%%%%%%%%%%%%%%%%%%%%%%%%%%%%%%%%%%%%%%%%%%%%%%%%%%%%%%%%%

\begin{proof}
Let $g$ be a piecewise Riemannian metric on~$X$.
By Theorem~\ref{theo:width}  and Proposition~\ref{prop:width}, there exists a constant~$C'_m > 0$ and a simplicial map $\pi:X \to P$ to some simplicial $(m-1)$-complex~$P$, where every fiber~$F_p=\pi^{-1}(p)$ is connected, such that 
\begin{equation} \label{eq:diam1}
\diam_X (F_p) < C'_m \, \vol(X,g)^{\frac{1}{m}}.
\end{equation}
The constant $C'_m$ can be taken arbitrarily close to~$C_m^{-\frac{1}{m}}$, where $C_m$ is the constant in Theorem~\ref{theo:width}.
By the fiber non-collapsing assumption, one of the subgroups~$i_*[\pi_1(F_{p_0})]$ has uniform exponential growth at least~$h(X)$.
The point~$p_0 \in P$ has a small enough open neighborhood $B_{p_0} \subseteq  P$ whose preimage $U_{p_0}=\pi^{-1}(B_{p_0})$ is homotopy equivalent to the fiber~$F_{p_0}$ and has diameter arbitrarily close to the one of~$F_{p_0}$ so that 
\begin{equation} \label{eq:diam2}
\diam_X (U_{p_0}) < C'_m \, \vol(X,g)^{\frac{1}{m}}.
\end{equation}
The group~$\Gamma_{p_0}=i_*[\pi_1(U_{p_0})]$, which is isomorphic to the finitely generated subgroup~$i_*[\pi_1(F_{p_0})]$, has uniform exponential growth at least~$h(X)$.
That is, $\ent(\Gamma_{p_0}) \geq h(X)$.
It follows from Proposition~\ref{prop:diam-ent} and~\eqref{eq:diam2} that 
\begin{equation} \label{eq:chain-diam}
\frac{1}{2} \, h(X) \leq \frac{1}{2} \, \ent(\Gamma_{p_0}) \leq \diam_X(U) \cdot \ent(X,g) \leq C'_m \, \ent(X,g) \, \vol(X,g)^{\frac{1}{m}}.
\end{equation}
Hence, the minimal volume entropy of~$X$ is positive and satisfies 
\[
\omega(X) \geq \frac{1}{2 C'_m} \, h(X) > 0. %\, \ent(\Gamma_{p_0}) > 0.
\]

Suppose that every ball~$B(R)$ of radius~$R$ has volume at most~$C_m \, R^m$, where $C_m$ is the constant in Theorem~\ref{theo:width}.
By Theorem~\ref{theo:width}, we can replace the right-hand sides in the inequalities~\eqref{eq:diam1} and~\eqref{eq:diam2} by~$R$.
By making the appropriate change in the inequality chain~\eqref{eq:chain-diam}, we derive the desired lower bound for the volume entropy of~$X$.
\end{proof}
\forgotten
%%%%%%%%%%%%%%%%%%%%%%%%%%%%%%%%%%%%%%%%%%%%%%%%%%%%%%%%%%%%%%%%%%%%%%%%%%%%%%%%%%%%%%%%%%%%%%%%%%%%%%%%%%%%%%%%%%%%%%%%%%%%

%%%%%%%%%%%%%%%%%%%%%%%%%%%%%%%%%%%%%%%%%%%%%%%%%%%%%%%%%%%%%%%
%%%%%%%%%%%%%%%%%%%%%%%%%%%%%%%%%%%%%%%%%%%%%%%%%%%%%%%%%%%%%%%

%%%%%%%%%%%%%%%%%%%%%%%%%%%%%%%%%%%%%%%%%%%%%%%%%%%%%%%%%%%%%%%%%%%%%%%%%%%%%%%%%%%%%%%%%%%%%%%%%%%%%%%%%%%%%%%%%%%%%%%%%%%%

\subsection{Simplicial volume and minimal volume entropy} \label{subsec:ex}

\mbox{ } 
\medskip

%%%%%%%%%%%%%%%%%%%%%%%%%%%%%%%%%%%%%%%%%%%%%%%%%%%%%%%%%%%%%%%%
%%%%%%%%%%%%%%%%%%%%%%%%%%%%%%%%%%%%%%%%%%%%%%%%%%%%%%%%%%%%%%%%

%The previous example extends in higher dimension as follows using different arguments.

We construct a sequence of simplicial complexes~$Z_m$ with zero simplicial volume and arbitrarily large minimal volume entropy. \\

Remove a ball from a closed manifold of dimension~$m=2k \geq 4$ with positive simplicial volume.
The resulting space~$\Sigma$ is a manifold with boundary $\partial \Sigma \simeq S^{2k-1}$.
Fix an integer $d \geq 3$.
Denote by~$Y$ the quotient of~$\Sigma$ by the natural free action of~$\Z_d$ on~$S^{2k-1}$ given by rotation of the Hopf fibration.
Observe that $\pi_1(Y) \simeq \pi_1(\Sigma) \ast \Z_d$ and $H_m(Y;\Z)=0$.
Define the simplicial $m$-complex
\[
X_n = \#_{i=1}^n Y_i
\]
by taking the connected sum of $n$ copies of~$Y$.
Note that $H_m(X_n;\Z)=0$.

The space~$X_n$ admits a $d$-sheeted cyclic cover which can be described as follows.
The connected sum $\#_{i=1}^n \Sigma_i$ of $n$ copies of~$\Sigma$ is a manifold whose boundary identifies with the disjoint union~$\sqcup S_i^{2k-1}$ of $n$ spheres.
Let $\widehat{X}_n$ be the space obtained by gluing $d$ copies of~$\#_{i=1}^n \Sigma_i$ along this disjoint union
\[
\widehat{X}_n = ( \sqcup S_i^{2k-1} ) \cup_{\psi_1} ( \#_{i=1}^n \Sigma_i ) \cdots \cup_{\psi_d} ( \#_{i=1}^n \Sigma_i )
\]
where the attaching maps~$\psi_j$ are given by the action of~$\alpha^j$ on the boundary components of~$\#_{i=1}^n \Sigma_i$ (for a fixed generator~$\alpha$ of~$\Z_d$).
The cover $\widehat{X}_n \to X_n$ is the natural map sending the $d$ copies $\#_{i=1}^n \Sigma_i$ to~$X_n$.
By the comparison principle, see~\cite[Lemma~4.1]{Brunnbauer08}, we have 
\begin{equation} \label{eq:d}
\omega(\widehat{X}_n) \leq d^\frac{1}{m} \, \omega(X_n).
\end{equation}

Now, take two copies $\#_{i=1}^n \Sigma_i$ and $\#_{i=1}^n \bar{\Sigma}_i$ in~$\widehat{X}_n$.
By construction, the boundaries~$\partial \Sigma_i$ and~$\partial \bar{\Sigma}_i$ agree and the union
\[
M_n = ( \#_{i=1}^n \Sigma_i ) \cup ( \#_{i=1}^n \bar{\Sigma}_i )
\]
is a closed $m$-manifold homeomorphic to
\[
M_n \simeq \#_{i=1}^n (  \Sigma_i \# \bar{\Sigma}_i ) \, \, \#_{i=1}^n ( S^1 \times S^{2k-1} ).
\]
Since the simplicial volume is additive under connected sums in dimension at least three, see~\cite{gro82}, we obtain
\[
\Vert M_n \Vert_\Delta = 2n \, \Vert \Sigma \Vert_\Delta > 0.
\]
Thus, by~\eqref{eq:gro}, the minimal volume entropy~$\omega(M_n)$ of~$M_n$ goes to infinity when $n$ tends to infinity.

To conclude, consider the simplicial $m$-complex~$Z_n$ defined as the connected sum
\[
Z_n= X_n \# \T^m.
\]
Clearly, $H_m(Z_n;\Z)=\Z$ and $\Vert Z_n \Vert_\Delta = 0$. 
Observe that $Z_n$ is a cellular $m$-complex with a single $m$-cell.
Note also that $Z_n$ is not aspherical since its fundamental group has torsion.
By the estimate $\omega(N_1)^m \leq \omega(N_1 \# N_2)^m$ established in~\cite[Theorem~2.12]{BS} for connected closed $m$-pseudomanifolds~$N_1$ and~$N_2$ with $m \geq 3$ and~$N_2$ orientable (which still holds when $N_1$, here~$X_n$, is a cellular $m$-complex with a single $m$-cell), we have $\omega(Z_n) \geq \omega(X_n)$.
Since $\pi_1(M_n)$ is a subgroup of~$\pi_1(\widehat{X}_n)$ and the manifold~$M_n$ contained in~$\widehat{X}_n$ has the same dimension~$m$ as~$\widehat{X}_n$, we deduce that $\omega(\widehat{X}_n) \geq \omega(M_n)$.
Thus, by~\eqref{eq:d}, the minimal volume entropy~$\omega(Z_n)$ of~$Z_n$ goes to infinity.

\begin{remark}
Similar examples exist in odd dimensions but their construction is more technical.
\end{remark}

%{\parindent =0.7truecm}

\end{document}